\documentclass[12pt]{amsart}

\usepackage[a4paper]{geometry}
\usepackage{xcolor}

\usepackage[T1]{fontenc}
\usepackage[latin9]{inputenc}
\usepackage[english]{babel}
\usepackage{amssymb}
\usepackage{mathtools}
\usepackage[tt=false]{libertine}
\usepackage[varbb]{newpxmath}
\usepackage{bbm}
\usepackage[colorlinks=true,linkcolor=blue,citecolor=blue,urlcolor=blue]{hyperref}
\usepackage[capitalize,nameinlink]{cleveref}

\DeclareMathOperator*{\esssup}{ess\,sup}

\renewcommand\leq\leqslant
\renewcommand\geq\geqslant
\newcommand\eps\varepsilon

\newcommand{\E}{\mathsf E}

\newcommand{\R}{\mathbb R}

\newcommand{\F}{\mathcal F}

\def\PP{{\rm P}}

\numberwithin{equation}{section}
\numberwithin{figure}{section}

\theoremstyle{plain}
\newtheorem{thm}{Theorem}
\newtheorem{lem}{Lemma}

\theoremstyle{definition}
\newtheorem{assumption}{Assumption}

\theoremstyle{remark}
\newtheorem{rem}{Remark}

\title[Ergodic Properties of Non-Linear Density-Dependent Perturbations of the OU Process]{Ergodic Properties of Non-Linear Density-Dependent Perturbations of the Ornstein--Uhlenbeck Process}
\author{Denis Belomestny}
\address{Duisburg-Essen University, Germany}
\email{denis.belomestny@uni-due.de}
\thanks{The present research is supported by the Deutsche Forschungsgemeinschaft through the
BE 3961/7-1 ``Statistische Inferenz f\"ur
Teilchensysteme und McKean-Vlasov-SDEs mit singul\"aren Kernen''}
\author{Ekaterina Morozova}
\address{Duisburg-Essen University, Germany}
\email{ekaterina.morozova@uni-due.de}
\thanks{The second author is the corresponding author.}
\date{}

\begin{document}
\begin{abstract}
    The present paper considers McKean-Vlasov SDEs with density-dependent spatially unbounded drift, which may be viewed as a non-linear density-dependent perturbation of the Ornstein-Uhlenbeck process. We develop a comprehensive theoretical framework for this class of equations. First, we establish strong well-posedness and derive optimal Gaussian pointwise bounds for both the solution density and its gradient. Then we derive an explicit expression for the stationary density and show that it satisfies logarithmic Sobolev and Poincar\'e inequalities. Finally, we prove exponential convergence to equilibrium in the  \(\chi^2\)-metric.
\end{abstract}
\subjclass{60H10, 60J60, 35Q84, 35K59}
\keywords{McKean--Vlasov SDEs, density-dependent drift, Nemytskii coefficients, non-linear Fokker-Planck equations, Ornstein-Uhlenbeck process, exponential ergodicity}
\maketitle

\section{Introduction}
\label{sec:intro}

McKean--Vlasov stochastic differential equations (MVSDEs), also referred to as dist-ribution-dependent stochastic differential equations, have become a cornerstone of modern probability theory and stochastic analysis. 
Originally introduced to describe the limiting macroscopic behavior of large systems of weakly interacting particles in statistical mechanics, MVSDEs are now ubiquitous across various fields, including mathematical finance, population dynamics, and mean-field games. 
In the classical setting, the drift and diffusion coefficients depend on the marginal law of the solution through global integral operators (such as convolution with a regular interaction kernel), reflecting a mean-field interaction where each particle is influenced by the averaged state of the entire population.
\par
In this paper, we consider a specific and highly non-linear class of MVSDEs on $\mathbb{R}^{d}$, $d \ge 1$, which features \emph{Nemytskii-type} coefficients. In such models, the interaction is strictly local: the coefficients depend directly on the pointwise value of the probability density function of the state. Specifically, we study the dynamics of a process $X = (X_t)_{t\ge0}$ governed by the equation
\begin{equation}\label{eq:MV-SDE}
dX_t = -\Xi(p_t(X_t))\nabla\Phi(X_t)\,dt
+ \sqrt{2}\,dW_t, \quad X_0 \sim p_0,
\end{equation}
where $(W_{t})_{t\ge0}$ represents a standard $d$-dimensional Brownian motion on a filtered probability space $(\Omega, \mathcal{F},(\mathcal{F}_{t}), \mathsf{P})$, and $p_t$ denotes the probability density function of $X_t$ with respect to the Lebesgue measure. The function $\Phi: \mathbb{R}^d \to \mathbb{R}$ acts as a spatial confining potential, while the scalar function $\Xi: \mathbb{R}_+ \to \mathbb{R}_+$ modulates the strength of this potential based on the local spatial concentration of the mass.
\par
By It\^o's formula, the marginal densities $p_t(x)$ of a weak solution to \eqref{eq:MV-SDE} are intimately linked to the non-linear Fokker--Planck equation (NFPE):
\begin{equation}\label{eq:NFPE}
\partial_{t}p - \Delta p - \operatorname{div}\big(\nabla\Phi(x)\Xi(p)p\big) = 0, \quad p(0,x) = p_{0}(x),
\end{equation}
where $p_{0}$ is the initial density of $X_{0}$. Conversely, if $p$ serves as a sufficiently regular distributional solution to the NFPE \eqref{eq:NFPE}, it provides the marginal distributions for a weak solution of the MVSDE \eqref{eq:MV-SDE} on a corresponding probability space such that $p_t(x)dx = \mathsf{P} \circ (X_t)^{-1}(dx)$. 

Throughout this paper, we work under the following structural assumptions on the coefficients and the initial condition.

\begin{assumption}
\label{ass:main}
    Assume that:
    \begin{enumerate}
    \item \textit{$\Phi(x) = 1 + \|x\|_{2}^{2}$;}
    \item \textit{$\Xi \in C_{b}(\mathbb{R}_{+}) \cap C^{1}(\mathbb{R}_{+})$ is a globally Lipschitz function such that $0 < \kappa \le \Xi(r) \le K$ for all $r \ge 0$;}
    \item \textit{$p_{0} \in L^{\infty}(\mathbb{R}^{d}) \cap L^{1}(\mathbb{R}^{d})$ is such that $p_{0}(x) \le c_{0}\exp\{-b_{0}\|x\|_{2}^{2}\}$ with some $c_{0}, b_{0} > 0$.}
\end{enumerate}
\end{assumption} 

The first condition implies that $\nabla\Phi(x) = 2x$, which allows us to view \eqref{eq:MV-SDE} as a density-dependent, non-linear perturbation of the classical Ornstein--Uhlenbeck (OU) process. Surprisingly, even though the linear Ornstein--Uhlenbeck process is exquisitely well-studied, introducing a local Nemytskii-type coefficient combined with a spatial drift of linear growth presents profound analytical difficulties.

\subsection{Literature Overview and Motivation}
The well-posedness and asymptotic behavior of classical MVSDEs---where the coefficients depend continuously on the probability measure equipped with the Wasserstein metric---are mathematically well established. In such regimes, one can apply such powerful probabilistic tools as synchronous couplings and Wasserstein contraction arguments. Consequently, the literature readily accommodates systems featuring spatial drifts that are unbounded, covering cases of linear growth \cite{Mishura2020}, as well as singular and superlinear drifts \cite{Kumar2022, Ren2023, Rockner2021}. Related one-dimensional nonlocal models, where the drift depends on the cumulative distribution function of the law, were studied by Jourdain and Malrieu via propagation of chaos and Poincar\'e inequalities \cite{jourdain2008propagation}.
However, the literature concerning MVSDEs with Nemytskii-type coefficients is significantly more scarce. Because Nemytskii operators rely on the pointwise evaluation of the density, the mapping $\mu \mapsto \Xi(d\mu/dx(\cdot))$ fundamentally lacks continuity with respect to standard weak topologies or the Wasserstein distance. A small perturbation in Wasserstein space provides no pointwise control over the corresponding densities, causing traditional probabilistic fixed-point arguments formulated on the space of probability measures to break down. To overcome this, the study of local density-dependent equations has required heavily analytic approaches, relying on parabolic PDE theory, Sobolev estimates, or weighted $L^1$-semigroups. The strong well-posedness of time-dependent Nemytskii equations has been studied by Grube \cite{Grube2024}, and a weighted $L^1$-semigroup approach for related NFPEs was recently developed by Rehmeier \cite{Rehmeier2023}. The comprehensive monograph by Barbu and R\"ockner \cite{Barbu2024} establishes generalized porous media and NFPE flows that offer considerable flexibility. We also refer to the recent review by Bondi et al.\ \cite{Bondi2025} for a broader summary of the state-of-the-art on both distribution- and density-dependent singular equations.
Yet, a common limitation in much of the existing literature on Nemytskii-type distribution-dependent equations is the reliance on spatially bounded drift coefficients, or on assumptions that effectively prevent the unbounded spatial part from interacting too strongly with the local density dependence. Such restrictions are technically important, since bounded spatial drifts help preclude mass escape and provide compactness and regularity estimates for the associated densities. In the present model, however, \(\nabla\Phi(x)=2x\), so the drift in \eqref{eq:MV-SDE} is genuinely unbounded, growing linearly at infinity, and is coupled multiplicatively with the non-linear Nemytskii coefficient.

We note that, at the PDE level, unbounded spatial drifts in NFPEs with local and nonlocal dependence have been treated in considerable generality by Bogachev et al.~\cite{BSS24}. Their framework allows coefficients with polynomial growth in the spatial variable and includes local dependence on the density. The results there are formulated for prescribed total mass and, in the parts most relevant to our setting, involve a coefficient-dependent small-mass threshold \(M\in(0,M_0)\). In contrast, the present research is tailored to the probability McKean--Vlasov setting, where \(M=1\) is fixed, and aims at more model-specific and quantitative conclusions: we identify the associated McKean--Vlasov dynamics, characterize the invariant probability density explicitly, establish functional inequalities for it, and prove exponential convergence in \(\chi^2\)-divergence. This makes the rigorous study of \eqref{eq:MV-SDE}, particularly its long-time ergodicity, a problem of significant mathematical interest.


\subsection{Methodology and Main Contributions}
The primary objective of this paper is to bridge the theoretical gap outlined above by establishing the global well-posedness, exact stationary characteristics, and exponential ergodicity for the unbounded density-dependent system \eqref{eq:MV-SDE}. Our main contributions can be summarized as follows:

\begin{enumerate}
    \item \textit{Strong Well-Posedness and Regularity:} We first establish the existence and uniqueness of strong solutions to the Nemytskii MVSDE \eqref{eq:MV-SDE} (Theorem~\ref{thm:wellposednessMV}). By formulating a fixed-point argument in a weighted, exponentially-decaying \(L^2\)-space and employing a Zvonkin-type space-time transformation, we circumvent the unbounded drift and construct a unique locally classical solution \(p_t(x)\) for the NFPE~\eqref{eq:NFPE} that inherently satisfies baseline Gaussian tail bounds.
    
    \item \textit{Stationary Distribution and Functional Inequalities:} We explicitly characterize the unique, strictly positive stationary probability density \(\pi(x)\) of the system (Lemma~\ref{lem:stationary}). Because the model behaves asymptotically as a bounded perturbation of a Gaussian potential, we rigorously show via the Holley--Stroock perturbation lemma (see, e.g., \cite{Cattiaux2022}) that \(\pi\) retains Gaussian tail behavior and strictly satisfies both the Log-Sobolev and Poincar\'e inequalities (Lemma~\ref{LSI}).
    
    \item \textit{Optimal Pointwise Bounds:} To study the long-time exponential ergodicity of the system in the $\chi^2$-divergence, the mere existence of Gaussian tails is insufficient. A rigorous justification requires uniform-in-time, rather sharp exponential bounds for both the density $p_t(x)$ and its spatial gradient $\nabla_x p_t(x)$. We first enhance the initial bound for the density \(p_t\) obtained in the fixed-point argument, establishing a Gaussian pointwise bound with uniform constants and an optimal rate \(\Xi(0)\) (Lemma~\ref{lem:unifbound}). Afterwards this result is used to enhance the baseline bound for the gradient constructed in Theorem~\ref{thm:grad_pt-bound}. To achieve this, we pivot to an analytic PDE approach. By representing the NFPE as an additive, continuous perturbation of the Ornstein--Uhlenbeck generator, we leverage Duhamel's variation of constants formula along the exact OU heat kernel. Using this strategy, we again establish an upper pointwise Gaussian bound with rate \(\Xi(0)\), so both bounds for the density~\eqref{gaussbound_pt_unif} and its gradient~\eqref{grad_est} match the decay rate of the unperturbed OU process.

    \item \textit{Exponential Ergodicity in \(\chi^2\)-Divergence:} Given these exact, optimized pointwise estimates, we justify the generalised integration-by-parts argument required to differentiate the \(\chi^2\)-divergence \(\chi^2(p_t \,\|\, \pi)\). Under a mild control on the Lipschitz constant of the interaction function \(\Xi\), we prove that the law of the process converges exponentially fast to its unique stationary state (Theorem~\ref{thm:chisq}).
\end{enumerate}

\subsection{Organization of the Paper}
The paper is structured as follows. In Section~\ref{sec:wellposedness}, we formally state and prove the well-posedness of the NFPE and the strong uniqueness of the corresponding MVSDE, establishing a uniform optimal bound for the density and an initial baseline bound for its gradient. Section~\ref{sec:stationary} is devoted to characterizing the exact stationary distribution and establishing the associated Poincar\'e inequality. In Section 3, we state the precise optimal spatial bounds for the gradient of the density, and prove the exponential ergodicity in the $\chi^2$-metric. Section~\ref{sec:chisq} establishes uniform-in-time polynomial moment bounds for the underlying stochastic process, as well as the moment bounds for the exponential of its integrated norm. 
Section~\ref{sec:proofs} contains the proofs of the established results. Finally, Section~\ref{sec:aux} gathers some auxiliary results necessary for the proofs.

\textbf{Notation.} Throughout the paper, \(\Vert \cdot\Vert_2\) denotes the standard Euclidean norm on \(\mathbb{R}^d\), and \(C^{\beta/2,\beta}((0,T)\times \R^d)\) for \(T, \beta>0\) stands for the parabolic H\"older space.
The sign \(\lesssim\) (resp.\ \(\gtrsim\)) denotes inequality up to some constants independent of the unknown distribution.

\section{Well-posedness and basic bounds}\label{sec:wellposedness}

We start with establishing the well-posedness of~\eqref{eq:MV-SDE}, as well as some basic bound for the corresponding density. 
\begin{thm}\label{thm:wellposednessMV}
    Suppose that, for a given time horizon \(T>0\), Assumption~\ref{ass:main} holds with \(b_0>\kappa/2\) and \(0<\kappa<K<\infty\) such that \(K/\kappa\leq 2\). Then the SDE~\eqref{eq:MV-SDE} has a unique strong solution. Moreover, this solution has a density \(p\in C^{\beta/2,\beta}_{\mathrm{loc}}((0,T)\times\R^d)\), \(\beta\in(0,1)\), satisfying~\eqref{eq:NFPE} and such that
    \begin{equation}\label{gaussbound_pt}
    p_t(x)\leq c_T\exp\{-\gamma_T\Vert x\Vert_2^2\},
    \quad \forall\,(t,x)\in(0,T)\times\R^d,
    \end{equation}
    with some \(c_T>0\) and \(\gamma_T\in(0,\kappa/2)\). 
\end{thm}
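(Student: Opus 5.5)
We construct $p$ by a fixed-point argument on the linearized Fokker--Planck equation and then recover the SDE from it. Fix $T>0$ and $\gamma\in(0,\kappa/2)$ with $\gamma<b_0$. Let $\mathcal X_\gamma$ be the set of measurable $q:(0,T)\times\R^d\to\R_+$ with
\[
\sup_{t,x} e^{\gamma\|x\|_2^2}q_t(x)\le c,
\]
metrized by a weighted $L^2$ norm $\sup_t\|e^{\gamma'\|x\|_2^2/2}(\cdot)\|_{L^2}$, possibly with an exponential time weight $e^{-\lambda t}$. For $q\in\mathcal X_\gamma$ put $b^q(t,x)=-2\,\Xi(q_t(x))\,x$, and let $p=\Gamma(q)$ be the law density of the linear SDE
\[
dY_t=b^q(t,Y_t)\,dt+\sqrt2\,dW_t,\qquad Y_0\sim p_0 .
\]
The drift is measurable, has linear growth and satisfies $\langle b^q(t,x),x\rangle\le -2\kappa\|x\|_2^2$. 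The unbounded drift is removed by a Zvonkin/OU-type space-time change. One option is $Z_t=e^{2\kappa t}Y_t$, which turns the drift into $-2(\Xi(q)-\kappa)x$ times $e^{2\kappa t}$. Alternatively, write $b^q=-2\kappa x+\tilde b$ with $\tilde b=-2(\Xi(q)-\kappa)x$ and solve the Zvonkin PDE for $\tilde b$ relative to the OU generator on $[0,T]$. Either route, together with Veretennikov/Krylov--Röckner results on each bounded ball and a localization based on nonexplosion, gives a unique strong solution for each $q$. Krylov estimates then show that $Y_t$ has a density $p_t$ solving the linear FPE in the distributional sense, and interior parabolic De Giorgi--Nash / Schauder theory gives $p\in C^{\beta/2,\beta}_{\mathrm{loc}}$.

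\emph{Invariance of the Gaussian class.} This is where $b_0>\kappa/2$ and $K/\kappa\le2$ enter. Compare $p$ with the supersolution
\[
\bar p(t,x)=c(t)\exp\{-\gamma(t)\|x\|_2^2\}.
\]
Apply the linear operator $L^q p=\partial_tp-\Delta p-\operatorname{div}(2\Xi(q)xp)$ to $\bar p$. The divergence term produces $2\Xi(q)d\,\bar p$ and $-4\Xi(q)\gamma\|x\|_2^2\bar p$; note that the $x$-derivative of $\Xi(q)$ does not appear, because we treat $b^q$ as a given coefficient and use a weak/Aronson-type comparison rather than differentiating it. Choosing $\gamma(t)$ to solve a Riccati-type ODE $\dot\gamma\ge -4\gamma^2+4\kappa\gamma$ and letting $c(t)$ grow like $e^{Ct}$ handles the quadratic terms. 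Because the drift pushes inward with rate between $2\kappa$ and $2K$, an Aronson-type Gaussian bound for the transition density along the OU flow gives the same conclusion: $\Gamma(q)_t(x)\le c_T e^{-\gamma_T\|x\|_2^2}$ with $\gamma_T<\kappa/2$ independent of $q$, since only $\kappa$, $K$, $b_0$ and $c_0$ enter. The condition $K/\kappa\le2$ should be what lets the Gaussian exponent be propagated uniformly. The drift is not monotone in $\Xi$, so the comparison is done through the mixed Gaussian kernel bounds, and the ratio condition keeps the resulting exponent below $\kappa/2$. Hence $\Gamma$ maps $\mathcal X_\gamma$ into itself.

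\emph{Contraction.} For $q^1,q^2$, the difference $w=p^1-p^2$ solves
\[
L^{q^1}w=\operatorname{div}\bigl(2(\Xi(q^1)-\Xi(q^2))x\,p^2\bigr).
\]
Test this against $w\,e^{\gamma'\|x\|_2^2}$ with $\gamma'<\gamma_T$. The Lipschitz bound on $\Xi$ and the Gaussian bound on $p^2$ absorb the factor $\|x\|_2$, and the factor $e^{\gamma'\|x\|_2^2}$ is controlled by $e^{-\gamma_T\|x\|_2^2}$. Young's inequality moves $\nabla w$ to the left-hand side, and the weight-derivative terms are harmless because $\gamma'$ is small compared with the dissipation. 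Grönwall with the time weight $e^{-\lambda t}$ then gives a contraction, yielding a unique fixed point $p$. This fixed point solves the NFPE~\eqref{eq:NFPE} and satisfies~\eqref{gaussbound_pt}.

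\emph{From the fixed point to strong well-posedness.} Substituting the fixed point back, the linear SDE with $q=p$ is a strong solution of~\eqref{eq:MV-SDE}. For uniqueness, any other weak solution has a density satisfying the same Gaussian bound, by the same a priori estimate applied with $q$ equal to its own law. It therefore lies in $\mathcal X_\gamma$ and coincides with $p$ by the contraction. Pathwise uniqueness of the linear SDE then gives strong uniqueness.

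\emph{Main obstacle.} I expect the hardest step to be the uniform Gaussian bound for $\Gamma(q)$. It must hold with a rate $\gamma_T<\kappa/2$ independent of $q$, even though the drift is only measurable and unbounded, and it is the place where $K/\kappa\le2$ has to be exploited precisely. The cleanest attempt is to pass to the OU-rescaled variable. There the drift becomes bounded on compact time intervals after the change of variables, which gives two-sided Aronson estimates; returning to the original variables then gives the Gaussian rate.
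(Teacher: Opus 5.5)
\textbf{Contraction step.} This step has a genuine gap. Test $L^{q^1}w=\operatorname{div}F$ against $w\varrho$ with $\varrho=e^{\gamma'\|x\|_2^2}$. The transport term gives
\[
\int w\varrho\,\operatorname{div}\bigl(2\Xi(q^1)xw\bigr)\,dx=-2\int\Xi(q^1)\,\varrho\,w\,x\cdot\nabla w\,dx-4\gamma'\int\Xi(q^1)\|x\|_2^2w^2\varrho\,dx .
\]
Because $\Xi(q^1)$ is only measurable, you cannot integrate the first integral by parts into a $w^2$-term. Young's inequality moves $\nabla w$ to the left only at the cost of $K^2\int\|x\|_2^2w^2\varrho\,dx$ on the right, and Gr\"onwall in $\int w^2\varrho\,dx$ cannot absorb that.

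The only compensation is the weight's own contribution $(2\gamma'^2-4\gamma'\kappa)\int\|x\|_2^2w^2\varrho\,dx$, so you would need $K^2+2\gamma'^2-4\gamma'\kappa\le 0$. This fails for small $\gamma'$. For $\gamma'<\kappa$ it forces $K<\sqrt2\,\kappa$ and $\gamma'>\kappa(1-1/\sqrt2)$, whereas the weighted norm of $w$ is only finite for $\gamma'<2\gamma_T$. Splitting $\Xi(q^1)$ into a constant plus a remainder still leaves a coefficient $\ge (K-\kappa)^2/4>0$ as $\gamma'\to 0$. So ``$\gamma'$ small compared with the dissipation'' is backwards: the weight is the only thing controlling the unbounded non-smooth drift.

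The paper never integrates by parts against $\Xi(q)$. It proceeds as follows:
\begin{itemize}
\item it bounds $\mathrm{KL}(p^{q^1}_t\Vert p^{q^2}_t)$ by the Girsanov path-space entropy $\E\int_0^t\|X_s\|_2^2\bigl(\Xi(q^2_s(X_s))-\Xi(q^1_s(X_s))\bigr)^2ds$;
\item it bounds KL from below by the $\chi^2$-type quantity of Lemma~\ref{lem:KL_lower};
\item it uses $\|x\|_2\,p_t(x)\le c_T/\sqrt{2e\gamma_T}$ on both sides to get a contraction in $L^2(\|x\|_2\,dx)$ with time weight $e^{-\beta t/2}$.
\end{itemize}
This is also where $K/\kappa\le 2$ enters, through Novikov's condition (Lemma~\ref{OU-expmoments}), not in the Gaussian bound.

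\textbf{Gaussian invariance.} This step is not established by your mechanisms. The $x$-derivative of $\Xi(q)$ does appear: $\operatorname{div}(2\Xi(q)x\bar p)$ contains $2\bar p\,x\cdot\nabla[\Xi(q)]$. In weak form the supersolution inequality involves $\int \bar p\,\Xi(q)\,x\cdot\nabla\varphi$, which has no sign, so it cannot be checked from $\kappa\le\Xi\le K$. The Aronson route is circular. After $Z_t=e^{2at}Y_t$ the drift $-2(\Xi(q)-a)z$ still grows linearly. It becomes bounded only for $a=\Xi(0)$, using Lipschitz continuity of $\Xi$ and the Gaussian decay of $q$ itself, so the output constants depend on the class constants of $q$, not only on $\kappa,K,b_0,c_0$.

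What is needed is a Lyapunov-type pointwise bound that uses only $\langle b^q(t,x),x\rangle\le -2\kappa\|x\|_2^2$ and $\|b^q(t,x)\|_2\le 2K\|x\|_2$. The paper takes this from Example 7.3.10 of \cite{bogachev2022fokker}, which yields $q$-independent $c_T,\gamma_T$. Your strong well-posedness of the linear SDE via Krylov--R\"ockner plus localization is fine.
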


While the bound~\eqref{gaussbound_pt} already characterises the behaviour of \(p_t\), the constants \(c_T\) and \(\gamma_T\) can, in general, depend on the terminal time \(T\). Under a slightly stronger assumption on the initial condition, one can achieve a bound with uniform constants, as well as enhance the rate of convergence in the exponential. The precise statement is given by the following lemma. 

\begin{lem}\label{lem:unifbound}
Under the same assumptions as in Theorem~\ref{thm:wellposednessMV}, assuming also \(b_0>\Xi(0)\), it holds that, moreover, 
\begin{equation}\label{gaussbound_pt_unif}
    p_t(x)\leq c\exp\{-\Xi(0)\Vert x\Vert_2^2\},
    \quad \forall\,(t,x)\in(0,T)\times\R^d,
    \end{equation}
    with some constant \(c:=c(b_0, c_0, \Xi(0))>0\) independent of \(T\). 	
\end{lem}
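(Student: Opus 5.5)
We will prove \eqref{gaussbound_pt_unif} by a parabolic comparison (maximum principle) argument. The idea is to build a time-independent Gaussian supersolution of the linearised equation. Let \(p\) be the solution from Theorem~\ref{thm:wellposednessMV}, and define \(a(t,x):=\Xi(p_t(x))\) and \(\lambda:=\Xi(0)\). Then \(p\) solves the \emph{linear} equation
\[
\partial_t p=\Delta p+\operatorname{div}(2x\,a\,p)=\Delta p+2x\cdot\nabla(a p)+2d\,a p .
\]
We write \(u:=a(t,x)p\) only heuristically. In practice it is more convenient to work with a Gaussian barrier \(\psi(x)=c\,e^{-\lambda\|x\|_2^2}\) and compare it with \(p\) on regions where \(p\) is small.

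The key structural observation is the following. Since \(\Xi\) is Lipschitz with constant \(L\), we have \(|a-\lambda|=|\Xi(p)-\Xi(0)|\le L p\). The bound \eqref{gaussbound_pt} from Theorem~\ref{thm:wellposednessMV} then makes \(a-\lambda\) Gaussian-small at spatial infinity, at least on \((0,T)\). So outside a large ball, the equation is a small perturbation of the OU Fokker--Planck equation with parameter \(\lambda\), whose stationary density is exactly \(\propto e^{-\lambda\|x\|_2^2}\). This explains why the rate \(\Xi(0)\) is optimal.

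The plan is to apply the ansatz \(\psi=c\,e^{-\lambda\|x\|_2^2}\) in the expanded operator
\[
\mathcal L\psi=\Delta\psi+2a\,x\cdot\nabla\psi+2\big(d\,a+x\cdot\nabla a\big)\psi .
\]
A direct computation gives \(\mathcal L\psi=\psi\big[4\lambda(\lambda-a)\|x\|_2^2+2d(a-\lambda)+2x\cdot\nabla a\big]\), which vanishes when \(a\equiv\lambda\). The \(\nabla a=\Xi'(p)\nabla p\) term involves the gradient, and the argument must avoid using it. To do so, we use the divergence form instead and compare via the \(L^\infty\)/duality or Duhamel representation along the OU kernel. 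We write
\[
\partial_t p=\Delta p+2\lambda\operatorname{div}(xp)+2\operatorname{div}\big(x(a-\lambda)p\big)
\]
and use the mild formulation
\[
p_t=P^{\lambda}_t p_0+\int_0^t P^\lambda_{t-s}\,2\operatorname{div}\big(x(a_s-\lambda)p_s\big)\,ds ,
\]
where \(P^\lambda\) is the OU Fokker--Planck semigroup. Its kernel is explicit, and integrating by parts moves the divergence onto the kernel, giving a factor \((t-s)^{-1/2}\) (corrected by the OU scaling \(e^{-2\lambda(t-s)}\)). The free term satisfies \(P^\lambda_t p_0\le c'e^{-\lambda\|x\|_2^2}\) uniformly in \(t\), because \(b_0>\lambda\): the OU evolution of the Gaussian \(e^{-b_0\|x\|^2}\) is a Gaussian whose inverse variance interpolates monotonically between \(b_0\) and \(\lambda\), and its mass stays bounded.

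For the perturbation term, the source is bounded by \(2L\|x\|_2p_s^2\le C\|x\|_2 e^{-2\gamma\|x\|^2}\), which has a strictly faster Gaussian rate. We then run a bootstrap on the weighted norm \(M(t):=\sup_{s\le t}\sup_x e^{\lambda\|x\|^2}p_s(x)\), which is finite for each \(T\) by Theorem~\ref{thm:wellposednessMV} after possibly first lowering the exponent. We show that applying the kernel derivative to \(\|x\|e^{-(\lambda+\delta)\|x\|^2}\) yields \(\lesssim (t-s)^{-1/2}e^{-\rho(t-s)}e^{-\lambda\|x\|^2}\), with an exponential-in-time decay coming from the spectral gap of \(P^\lambda\) on the weighted space. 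The time integral is then bounded independently of \(t\), giving \(M(t)\le C_1+C_2\) with constants depending only on \(b_0,c_0,\Xi(0)\) (and \(L,d\)).

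The main obstacle is upgrading the rate from \(\gamma_T\) to \(\lambda\) while keeping constants independent of \(T\). Two things are needed: a uniform-in-time \(L^\infty\) bound on \(p\), and a source exponent strictly above \(\lambda\). We will first obtain \(\sup_t\|p_t\|_\infty<\infty\) from the same Duhamel identity using mass conservation and \(\Xi\ge\kappa\). We then iterate the exponent: if \(p\le Ce^{-\gamma\|x\|^2}\), the source decays like \(e^{-2\gamma\|x\|^2}\), and the Duhamel bound improves the rate to \(\min(\lambda,2\gamma)-\epsilon\). Finitely many steps reach \(\lambda\), with a final step at exact rate \(\lambda\) because \(2\gamma>\lambda\) by then. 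Controlling the time integral uniformly in each step, through the exponential decay of the OU kernel derivative, is the delicate point.
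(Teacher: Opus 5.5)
\textbf{There is a genuine gap.} The gap sits exactly where the lemma has content: \(c\) must not depend on \(T\), and your scheme never removes the \(T\)-dependence. All quantitative input to your Duhamel bootstrap comes from \eqref{gaussbound_pt}, that is, from \(c_T,\gamma_T\). The source \(2x(\Xi(p_s)-\Xi(0))p_s\) is quadratic in \(p\), so every step of the exponent iteration outputs a constant that is polynomial in \(c_T\). The coefficients depend on \(\gamma_T\), and so does the number of steps.

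Closing directly in \(M(t)\) instead only gives \(M(t)\le C_1+C_2L_{\mathrm{Lip}}(\Xi)M(t)^2\). That closes only under a smallness condition on \(L_{\mathrm{Lip}}(\Xi)\), which the lemma does not assume.

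Your preliminary step \(\sup_t\Vert p_t\Vert_\infty<\infty\) ``from mass conservation'' is unsupported. With only \(L^1\) control of the source, \(\Vert\nabla_y G_\tau\Vert_\infty\sim\tau^{-(d+1)/2}\) is not integrable at \(\tau=0\). Even granted, it would not give you a \(T\)-uniform Gaussian rate to start from.

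The mechanism you invoke for time-uniformity is also false: the OU semigroup has no spectral gap in the sup-norm weighted by the critical weight \(e^{\Xi(0)\Vert x\Vert_2^2}\). For \(F(y)=y e^{-\beta\Vert y\Vert_2^2}\) with \(\beta>\Xi(0)\), an explicit Gaussian computation gives \(e^{\Xi(0)\Vert x\Vert_2^2}|S^*_\tau(\nabla\cdot F)(x)|\approx C\rho_\tau^2\Vert x\Vert_2^2 e^{-c\rho_\tau^2\Vert x\Vert_2^2}\) for large \(\tau\), where \(\rho_\tau=e^{-2\Xi(0)\tau}\). The supremum over \(x\) stays of order one, attained at \(\Vert x\Vert_2\sim\rho_\tau^{-1}\). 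You can repair this by integrating in \(\tau\) pointwise in \(x\) before taking the supremum (substitute \(r=\rho_\tau\Vert x\Vert_2\)). But that only turns a uniform-in-time source bound into a uniform-in-time output, which brings you back to the first problem.

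\textbf{The missing idea is the gradient-flow structure the paper uses.} With \(g\) as in \eqref{g} and \(\widetilde F(p)=p\,\Xi(p)\), the equation reads \(\partial_t p=\nabla\cdot\bigl(\widetilde F(p)\nabla(g(p)+\Phi)\bigr)\). Hence \(u=g(p)+\Phi\) solves \(\widetilde F(p)\partial_t u=\nabla\cdot(\widetilde F(p)\nabla u)\), an equation with no zeroth-order term. Set \(M_0=\esssup(g(p_0)+\Phi)\), which is finite precisely because \(b_0>\Xi(0)\). A weighted energy/maximum-principle argument on \((u-M_0)_+\), with cut-offs, then gives \(p_t\le g^{-1}(M_0-\Phi)\) for all \(t\). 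The small-\(r\) estimate \(g(r)\ge\Xi(0)^{-1}\log r-C\) converts this into \eqref{gaussbound_pt_unif}.

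In your language, the right time-independent barrier is the nonlinear stationary profile \(g^{-1}(M_0-\Phi)\), not the Gaussian \(c\,e^{-\Xi(0)\Vert x\Vert_2^2}\) you first tried. With this barrier the troublesome \(\nabla a\) term never appears. Also, \eqref{gaussbound_pt} enters only qualitatively, to kill the cut-off boundary terms, which is why the constant is independent of \(T\). Your Duhamel-along-OU approach is essentially what the paper uses afterwards for \(\nabla p_t\) in Theorem~\ref{thm:grad_pt-bound}, where \eqref{gaussbound_pt_unif} is already available as a uniform input.
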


Finally, by a rather standard parabolic bootstrap argument, one can extend \(p_t\) to a locally classical solution to~\eqref{eq:NFPE}, and it can furthermore be shown that the gradient \(\nabla p_t\) also satisfies a Gaussian upper bound.

\begin{thm}\label{C2+grad}
    Under the same assumptions as in Lemma~\ref{lem:unifbound}, the weak solution \(p\) of~\eqref{eq:NFPE} belongs to the class \(C^{1+\beta/2,2+\beta}_{\mathrm{loc}}\bigl((0,T)\times\mathbb R^d\bigr)\) for some \(\beta\in(0,1)\) and any \(T>0\) fixed. In particular, it is a locally classical solution. Moreover, for any \(t\in(0,T)\) and \(x\in\R^d\) its gradient satisfies
    \begin{equation}\label{gaussbound_grad}
    \Vert \nabla p_t(x)\Vert_2
    \leq \frac{c_1(d,K,b_0,c_0,\Xi(0),L_{\mathrm{Lip}}(\Xi))}{\min\left(1,\sqrt{t/2}\right)}
    (1+\Vert x\Vert_2)\exp\left\{
    -\frac{\Xi(0)}{4}\Vert x\Vert_2^2
    \right\},
    \end{equation}
    where \(L_{\mathrm{Lip}}(\Xi)\) is the Lipschitz constant of \(\Xi\), and \(c_1(d,K,b_0,c_0,\Xi(0),L_{\mathrm{Lip}}(\Xi))\) is some constant. 
\end{thm}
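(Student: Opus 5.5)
The proof has two parts: interior regularity by a parabolic bootstrap, then a Gaussian gradient bound from a Duhamel representation around an Ornstein--Uhlenbeck kernel.

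\emph{Regularity.} By Theorem~\ref{thm:wellposednessMV}, $p\in C^{\beta/2,\beta}_{\mathrm{loc}}$. Since $\Xi\in C^1$ is globally Lipschitz, the coefficient $a(t,x):=\Xi(p_t(x))$ is locally H\"older continuous. We write \eqref{eq:NFPE} in divergence form,
\[
\partial_t p=\Delta p+\operatorname{div}\bigl(2x\,a(t,x)\,p\bigr).
\]
On a cylinder $Q\Subset(0,T)\times\R^d$ the field $2x\,a\,p$ is bounded and H\"older. Interior Schauder estimates for divergence-form equations (Ladyzhenskaya--Solonnikov--Ural'tseva) then give $\nabla p\in C^{\beta/2,\beta}_{\mathrm{loc}}$. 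Hence $a$ is locally $C^{1}$ in $x$ (and H\"older in $t$), with $\nabla a=\Xi'(p)\nabla p$. Expanding the divergence gives the non-divergence equation
\[
\partial_t p=\Delta p+2a\,x\cdot\nabla p+\bigl(2da+2x\cdot\nabla a\bigr)p ,
\]
whose coefficients are locally H\"older. The classical Schauder estimate then yields $p\in C^{1+\beta/2,2+\beta}_{\mathrm{loc}}$, so $p$ is a locally classical solution.

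\emph{Gradient bound.} We freeze the coefficient at its value at zero density and write \eqref{eq:NFPE} as a perturbation of the OU generator:
\[
\partial_t p=\Delta p+\Xi(0)\operatorname{div}(2xp)+\operatorname{div}(F),\qquad F:=2x\bigl(\Xi(p)-\Xi(0)\bigr)p .
\]
By the Lipschitz property and Lemma~\ref{lem:unifbound},
\[
|F|\le 2L_{\mathrm{Lip}}(\Xi)\,\|x\|_2\,p^2\lesssim \|x\|_2\,e^{-2\Xi(0)\|x\|_2^2}.
\]
Let $q_t(x,y)$ be the explicit Gaussian kernel of the OU operator $L=\Delta+\Xi(0)\operatorname{div}(2x\,\cdot)$. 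Its mean is $e^{-2\Xi(0)t}y$ and its covariance is $\sigma_t^2 I$ with $\sigma_t^2=(1-e^{-4\Xi(0)t})/(2\Xi(0))$. For $t\le 1$ we use Duhamel's formula from time $0$; for $t>1$ we restart from time $t-1$, using $p_{t-1}$ as initial datum. After integrating by parts in the source term,
\[
\nabla p_t(x)=\int\nabla_x q_{s}(x,y)\,p_{t-s}(y)\,dy-\int_0^{s}\!\!\int \nabla_x\nabla_y q_{r}(x,y)\,F_{t-r}(y)\,dy\,dr,\qquad s=\min(t,1).
\]
The datum $p_{t-s}$ is bounded by $c\,e^{-\Xi(0)\|y\|^2}$ uniformly in time by Lemma~\ref{lem:unifbound}; this uniformity is what makes the final constant independent of $T$.

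We then estimate the two terms as follows.
\begin{itemize}
\item \emph{First term.} We use $|\nabla_x q_s|\lesssim \sigma_s^{-1}(1+\|x-e^{-2\Xi(0)s}y\|/\sigma_s)\,q_s$. Convolving the Gaussian $q_s$ against $e^{-\Xi(0)\|y\|^2}$ gives a Gaussian in $x$ whose exponent is no worse than $\Xi(0)/4$, up to the polynomial factor $(1+\|x\|_2)$. The factor $\sigma_s^{-1}\sim s^{-1/2}$ produces the prefactor $1/\min(1,\sqrt{t/2})$.
\item \emph{Source term.} The mixed second derivative carries a factor $r^{-1}$, which is not integrable at $r=0$. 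I would avoid this either by splitting $F$ into its H\"older-continuous part, or more simply by keeping only one derivative on $q$ and not integrating by parts. That gives a Volterra inequality for $G(t):=\sup_x e^{\Xi(0)\|x\|^2/4}(1+\|x\|)^{-1}\|\nabla p_t(x)\|$, of the form $G(t)\lesssim t^{-1/2}+\int_0^t (t-r)^{-1/2}G(r)\,dr$ over unit time windows. A Gronwall lemma with the singular kernel (Henry's lemma) then closes the estimate, provided a baseline a priori finiteness of $G$ (e.g.\ from localized Schauder estimates) is available to justify it.
\end{itemize}

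The main obstacle is the Gaussian bookkeeping. One must check that convolving $q_r$ with a Gaussian weight of rate $2\Xi(0)$ or $\Xi(0)$, and then passing through the OU contraction, keeps a rate of at least $\Xi(0)/4$ uniformly in $r\in(0,1]$; the loss to $1/4$ gives room to absorb the polynomial factors. A second difficulty is keeping every constant independent of $T$ and dependent only on $d,K,b_0,c_0,\Xi(0),L_{\mathrm{Lip}}(\Xi)$.
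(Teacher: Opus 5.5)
Your regularity step is the same bootstrap the paper uses: divergence-form Schauder estimates give $\nabla p$ H\"older, then you expand and apply non-divergence Schauder. In both versions the expanded lower-order coefficient contains $\Xi'(p)\nabla p$, which is H\"older only if $\Xi'$ is, so strictly this needs a little more than $\Xi\in C^1$.

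\textbf{First gap: the a priori bound.} Your Volterra inequality for $G(t)=\sup_x e^{\Xi(0)\|x\|_2^2/4}(1+\|x\|_2)^{-1}\|\nabla p_t(x)\|_2$ only closes once you already know that $\sqrt{\min(t,1)}\,G(t)$ is finite on $(0,T)$. That is a global Gaussian-weighted gradient bound, which is essentially the statement itself up to the $T$-dependence of the constant.

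Theorem~\ref{thm:wellposednessMV} gives only local H\"older continuity of $p$. ``Localized Schauder estimates'' do not supply a global weighted bound unless you control how the local constant depends on the drift $2x\,\Xi(p)$, whose size near $x_0$ is of order $\|x_0\|_2$. Controlling exactly this is the paper's proof:
\begin{itemize}
\item rescale parabolically around $(t_0,x_0)$ with radius $r=\min(1,\sqrt{t_0/2})/(1+\|x_0\|_2)$, and normalise by the local supremum $S$ of $p$;
\item the rescaled drift $2r(x_0+ry)\Xi(Sv)$ is then bounded by $2K$, so the interior gradient estimate for quasilinear equations (\cite{LSU}, Ch.~V, Thm.~3.1) gives $\|\nabla p_{t_0}(x_0)\|_2\le CS/r$;
\item finally $S\le c\,e^{-\Xi(0)\|x_0\|_2^2/4}$ by Lemma~\ref{lem:unifbound} together with $\|x_0+ry\|_2\ge\|x_0\|_2/2$.
\end{itemize}
This produces, all at once and uniformly in $T$, the factor $1+\|x\|_2$ (from $1/r$), the prefactor $1/\min(1,\sqrt{t/2})$, and the rate $\Xi(0)/4$. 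The $1/4$ comes from that halving of the radius, not from room left to absorb polynomial factors.

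The OU--Duhamel argument you sketch appears in the paper only afterwards, in Theorem~\ref{thm:grad_pt-bound}. There \eqref{gaussbound_grad} is the baseline bound on $\nabla p$ inside $\nabla\cdot H$, and the argument upgrades the rate to $\Xi(0)$. Used to prove \eqref{gaussbound_grad} itself, it is circular.

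\textbf{Second gap: $T$-independence.} Even granting the a priori bound, Henry's lemma on unit windows does not give a constant independent of $T$. The time-uniform bound on the datum $p_{t-s}$ does not fix this.
\begin{itemize}
\item For $t\ge1$ your inequality reads $G(t)\le A+B\int_{t-1}^t(t-u)^{-1/2}G(u)\,du$, with $A,B$ depending on $c$, $d$, $\Xi(0)$ and $L_{\mathrm{Lip}}(\Xi)$.
\item As soon as $2B>1$, this is satisfied by $G(t)=e^{\lambda t}$ for small $\lambda>0$, since then $B\int_0^1 v^{-1/2}e^{-\lambda v}\,dv\ge1$.
\item Correspondingly, iterating Henry's lemma window by window only yields $\sup_{[n,n+1]}G\le C_H^{\,n}\max(A,\sup_{[0,1]}G)$, which grows with $T$.
\end{itemize}
To get a $T$-uniform constant, shorten the restart window to a length $\delta$ with $2B\sqrt{\delta}\le 1/2$; the first term then costs $\delta^{-1/2}$. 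You can then absorb $\tfrac12\sup_{[\delta,T]}G$ into the left-hand side. That absorption again presupposes $\sup_{[\delta,T]}G<\infty$, which is the missing first ingredient.
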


\section{Stationary distribution}\label{sec:stationary}

The following lemma characterises the form of the stationary distribution of~\eqref{eq:MV-SDE}. It is worth mentioning that the form of the stationary density coincides with that obtained in Chapter 4 of~\cite{barbu2024nonlinear}.
\begin{lem}\label{lem:stationary}
Under the same assumptions as in Lemma~\ref{lem:unifbound}, there exists a unique probability density \(\pi\) solving the stationary form of~\eqref{eq:NFPE}. Moreover, \(\pi\in C^1(\R^d)\) and satisfies 
\begin{equation}\label{pi_general}
\pi(x)=g^{-1}(-\Phi(x)+\mu),\quad x\in\mathbb{R}^{d}
\end{equation}
where 
\begin{eqnarray}\label{g}
g(r)=\int_{1}^{r}\frac{1}{s\,\Xi(s)}\,ds,\quad r> 0,
\end{eqnarray}
and $\mu$ is the unique number in $\mathbb{R}$ such that 
\begin{equation}\label{gnorm}
\int_{\mathbb{R}^{d}}g^{-1}(-\Phi(x)+\mu)\,dx=1.
\end{equation}
\end{lem}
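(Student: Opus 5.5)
The plan has three parts: existence via an explicit construction, uniqueness via a zero-flux argument, and regularity. The stationary equation is
\[
\Delta\pi+\operatorname{div}\bigl(\nabla\Phi\,\Xi(\pi)\pi\bigr)=0 .
\]
It has divergence form with flux $J=\nabla\pi+\Xi(\pi)\pi\nabla\Phi$.

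\textbf{Existence.} First I analyse $g$. Since $\kappa\le\Xi\le K$, we have $g'(r)=1/(r\Xi(r))>0$. Moreover $g(r)\to-\infty$ as $r\to0^+$ (at least like $\log r/K$) and $g(r)\to+\infty$ as $r\to\infty$. Hence $g:(0,\infty)\to\R$ is a $C^1$ increasing bijection, and $g^{-1}$ is $C^1$.

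Put $\pi_\mu=g^{-1}(\mu-\Phi)$. By the chain rule,
\[
\nabla\pi_\mu=-\pi_\mu\,\Xi(\pi_\mu)\nabla\Phi .
\]
So the flux $J$ vanishes identically and $\pi_\mu\in C^1$ is a classical stationary solution; the map $x\mapsto \pi_\mu(x)$ is $C^1$ because $\Phi$ is smooth.

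To normalise, I use the two-sided comparison $\kappa\le \Xi\le K$. From $g(r)\le \log r/K$ for $r\le1$, and similarly on the other side, one gets
\[
e^{\kappa(\mu-\Phi)}\lesssim \pi_\mu\lesssim e^{K(\mu-\Phi)}
\]
on the region where $\pi_\mu\le1$, with the opposite roles of $\kappa$ and $K$ where $\pi_\mu\ge1$. Consequently $F(\mu)=\int\pi_\mu\,dx$ is finite, continuous and strictly increasing by monotone convergence. It tends to $0$ as $\mu\to-\infty$ and to $\infty$ as $\mu\to\infty$. The intermediate value theorem then gives a unique $\mu$ satisfying \eqref{gnorm}.

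\textbf{Uniqueness.} This is the main obstacle: I must show that any probability density $\pi$ solving the stationary equation has zero flux. My approach is to restrict to the class relevant here: bounded densities with Gaussian decay as in \eqref{gaussbound_pt_unif}, and gradients bounded as in \eqref{gaussbound_grad}, for instance stationary solutions obtained as limits of $p_t$.

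For such $\pi$, write $\pi=e^{-V}$ locally and set $h=\pi/\pi_*$ for a suitable reference. A cleaner route is to integrate the equation against $G(\pi)+\Phi$ with $G=g$. Note that $\nabla g(\pi)=\nabla\pi/(\pi\Xi(\pi))$, so
\[
J=\pi\,\Xi(\pi)\,\nabla\bigl(g(\pi)+\Phi\bigr).
\]
Testing $\operatorname{div}J=0$ against $g(\pi)+\Phi$ with a cutoff and integrating by parts gives
\[
\int\pi\,\Xi(\pi)\,\bigl\|\nabla\bigl(g(\pi)+\Phi\bigr)\bigr\|_2^2\,dx=0 ,
\]
provided the boundary terms vanish. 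Since $\pi>0$ (strong maximum principle / Harnack), it follows that $g(\pi)+\Phi$ is constant, so $\pi=\pi_\mu$. Normalisation then fixes $\mu$.

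The delicate points are strict positivity of $\pi$ and control of the boundary terms. The growth $|g(\pi)|\lesssim\|x\|_2^2$, which follows from the Gaussian bounds from above and below, combined with the decay of $J$, should let the cutoff terms go to zero. If lower bounds are unavailable, I would instead use a truncation $g(\max(\pi,\varepsilon))$ and let $\varepsilon\to0$.
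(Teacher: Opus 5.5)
Your existence argument is correct, and more explicit than the paper's: the paper never checks separately that $g^{-1}(\mu-\Phi)$ has zero flux and only gets existence at the end through normalisation. You did swap the exponents, though. Where $\pi_\mu\le1$, i.e.\ $u=\mu-\Phi\le0$, the correct bounds are $e^{Ku}\le g^{-1}(u)\le e^{\kappa u}$. Your version would force $e^{(\kappa-K)u}\lesssim1$, which is false as $u\to-\infty$. The slip is harmless for the conclusion.

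The genuine gap is in uniqueness. The lemma asserts that $\pi_\mu$ is the only probability density solving the stationary equation, and the paper starts from an \emph{arbitrary} weak solution. You instead restrict to densities already known to be bounded, with Gaussian decay of $\pi$ and $\nabla\pi$. For the cutoff terms you even use a Gaussian lower bound, which for a general solution is only known after you have shown $\pi=\pi_\mu$.

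What is missing is an a priori estimate valid for every stationary probability solution. The paper gets it from the Lyapunov function $V(x)=e^{a\Vert x\Vert_2^2}$ with $a\in(0,\kappa)$. Since $\Xi(\pi)\ge\kappa$ whatever $\pi$ is, one has $L_\pi V\le(2ad-4a(\kappa-a)\Vert x\Vert_2^2)V$ for $L_\pi=\Delta-2\Xi(\pi)x\cdot\nabla$. Testing $\int\pi L_\pi\varphi\,dx=0$ with concave truncations of $V$ then yields $\int e^{a\Vert x\Vert_2^2}\pi\,dx<\infty$. The paper then proceeds in three steps:
\begin{itemize}
\item it upgrades this to a pointwise Gaussian upper bound;
\item it obtains $C^2_{\mathrm{loc}}$ regularity by running the evolution from the stationary datum $\pi$ (Theorems~\ref{thm:wellposednessMV} and~\ref{C2+grad});
\item it gets $\pi>0$ from the strong maximum principle.
\end{itemize}
Only after this does the zero-flux argument apply to every solution.

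With that in place, the boundary terms need neither a lower bound on $\pi$ nor any decay of $\nabla\pi$, so your $\varepsilon$-truncation fallback is unnecessary. Write $J=\pi\Xi(\pi)\nabla F$ with $F=g(\pi)+\Phi$, test with $\chi_R^2F$, and absorb the cross term by Young's inequality:
\[
\int_{\R^d}\chi_R^2\,\pi\,\Xi(\pi)\Vert\nabla F\Vert_2^2\,dx
\le 4\int_{\R^d}\pi\,\Xi(\pi)F^2\Vert\nabla\chi_R\Vert_2^2\,dx
\lesssim\frac{K}{R^2}\int_{\R^d}\pi F^2\,dx .
\]
The right-hand side vanishes as $R\to\infty$ as soon as $\pi F^2\in L^1(\R^d)$. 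This follows from $|g(u)|\lesssim1+|\log u|$, $\int\Vert x\Vert_2^4\pi\,dx<\infty$ and $\int\pi|\log\pi|^2\,dx<\infty$, i.e.\ from upper bounds alone. Hence $\nabla F\equiv0$ and $\pi=\pi_\mu$.
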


As was mentioned earlier, the model~\eqref{eq:MV-SDE} is essentially a non-linear perturbation of a classical Ornstein-Uhlenbeck process, for which the stationary measure is Gaussian and therefore log-concave. While the stationary density \(\pi\) need not be strictly log-concave in general, one can show that, being a bounded perturbation of the Gaussian density, it enjoys some of the similar properties, such as exponential bounds and the Log-Sobolev (and therefore Poincar\'e) inequality. 
\begin{lem}\label{LSI}
    The stationary density \(\pi\) satisfies 
    \begin{equation}\label{pi_bound}
   \pi(0)e^{-K\Vert x\Vert_2^2}
   \leq \pi(x)
   \leq \pi(0)e^{-\varkappa\Vert x\Vert_2^2},
   \quad \forall\,x\in\R^d.
   \end{equation}
   In addition, \(\pi\) satisfies the Log-Sobolev and Poincar\'e inequalities with constants \(C_{\mathrm{LSI}}\) resp.\ \(C_\mathrm{Poin}\) such that
   \begin{equation}\label{Poin-const}
   C_\mathrm{Poin}
   \leq C_{\mathrm{LSI}}
   \leq \frac{1}{2\Xi(0)}
   \exp\left\{
   \frac{L_{\mathrm{Lip}}(\Xi)\pi(0)}{\kappa}
   \right\}.
   \end{equation}
\end{lem}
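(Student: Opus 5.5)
We work directly from the representation \eqref{pi_general}. Set \(\psi(x):=-\log\pi(x)\). Since \(g(\pi(x))=\mu-\Phi(x)\) and \(g'(r)=1/(r\Xi(r))\), the chain rule (with \(\pi\in C^1\) by Lemma~\ref{lem:stationary}) gives
\[
\nabla\pi(x)=-\pi(x)\,\Xi(\pi(x))\,\nabla\Phi(x),\qquad \nabla\psi(x)=2\,\Xi(\pi(x))\,x .
\]

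\textbf{Pointwise bounds.} Integrating \(\nabla\psi\) along the segment \(s\mapsto sx\), \(s\in[0,1]\), gives
\[
\psi(x)-\psi(0)=\int_0^1 2s\,\Xi(\pi(sx))\Vert x\Vert_2^2\,ds .
\]
Since \(\kappa\le\Xi\le K\), this lies between \(\kappa\Vert x\Vert_2^2\) and \(K\Vert x\Vert_2^2\), which is \eqref{pi_bound} (with \(\varkappa=\kappa\)). The same formula shows that \(\pi\) is radially nonincreasing, so \(\pi(0)=\max\pi\), i.e. \(0<\pi\le\pi(0)\).

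\textbf{Splitting the potential.} Write
\[
\psi(x)=\psi(0)+\Xi(0)\Vert x\Vert_2^2+V(x),\qquad V(x):=\int_0^1 2s\,\bigl(\Xi(\pi(sx))-\Xi(0)\bigr)\Vert x\Vert_2^2\,ds .
\]
The reference measure \(\gamma\propto e^{-\Xi(0)\Vert x\Vert_2^2}\) is Gaussian with Hessian \(2\Xi(0)I\), so by Bakry--\'Emery it satisfies LSI with constant \(1/(2\Xi(0))\). Then \(\pi\propto e^{-V}\gamma\), and Holley--Stroock \cite{Cattiaux2022} gives
\[
C_{\mathrm{LSI}}(\pi)\le \frac{1}{2\Xi(0)}\,e^{\operatorname{osc}V}.
\]
The inequality \(C_{\mathrm{Poin}}\le C_{\mathrm{LSI}}\) is standard by linearisation.

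\textbf{Bounding the oscillation (main obstacle).} A naive bound \(|\Xi(\pi)-\Xi(0)|\le L_{\mathrm{Lip}}(\Xi)\pi(0)\) yields \(|V|\lesssim\Vert x\Vert_2^2\), which is unbounded. So we must exploit the Gaussian decay of \(\pi\). Using Lipschitz continuity together with \(\pi(sx)\le\pi(0)e^{-\kappa s^2\Vert x\Vert_2^2}\) from \eqref{pi_bound},
\[
|V(x)|\le L_{\mathrm{Lip}}(\Xi)\pi(0)\int_0^1 2s\Vert x\Vert_2^2e^{-\kappa s^2\Vert x\Vert_2^2}\,ds=\frac{L_{\mathrm{Lip}}(\Xi)\pi(0)}{\kappa}\bigl(1-e^{-\kappa\Vert x\Vert_2^2}\bigr).
\]
The oscillation estimate is then sharpened by sign considerations. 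Since \(V(0)=0\), one can track the sign of \(\Xi(\pi)-\Xi(0)\); in the worst case one retains the two-sided bound \(\operatorname{osc}V\le 2L_{\mathrm{Lip}}(\Xi)\pi(0)/\kappa\). To reach exactly the stated exponent, we instead use the one-sided form of Holley--Stroock, which bounds the constant by \(e^{\sup V-\inf V}\). We apply it after recentering \(V\) with the normalising constant, noting that \(V\) takes values in an interval of length \(L_{\mathrm{Lip}}(\Xi)\pi(0)/\kappa\): \(V(0)=0\), and \(V\) lies on one side of \(0\) pointwise wherever \(\Xi(\pi)-\Xi(0)\) has constant sign. This step, making the oscillation bound precise enough to match \eqref{Poin-const}, is where care is needed. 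If the sign argument fails, one accepts the slightly weaker constant with a factor \(2\) in the exponent.
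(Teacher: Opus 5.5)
Up to the last step your argument is the paper's. Integrating $\nabla\psi=2\Xi(\pi)x$ along rays is the paper's $G'(s)=\Xi(g^{-1}(\mu-1-s))\in[\kappa,K]$. After the substitution $u=s^2\Vert x\Vert_2^2$, your $V$ is exactly the paper's $G(\Vert x\Vert_2^2)-\widetilde G(\Vert x\Vert_2^2)-G(0)$, compared via Holley--Stroock with the same Gaussian $e^{-\Xi(0)\Vert x\Vert_2^2}$.

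The gap is the oscillation bound, which is the only thing fixing the exponent in \eqref{Poin-const}. Your estimate controls only $|V(x)|=|V(x)-V(0)|$, so it yields $\operatorname{osc}V\le 2L_{\mathrm{Lip}}(\Xi)\pi(0)/\kappa$. The sign argument you propose to remove the factor $2$ is not valid. Nothing in Assumption~\ref{ass:main} makes $\Xi$ monotone (the paper's own example is built from $\sin u$), so $\Xi(\pi(sx))-\Xi(0)$ may change sign along a ray, and $V$ need not stay on one side of $0$. Recentering by the normalising constant cannot help either, since $\sup V-\inf V$ is unchanged by adding constants to $V$. As written, you therefore prove only the weaker bound with $2L_{\mathrm{Lip}}(\Xi)\pi(0)/\kappa$ in the exponent, which is not the statement.

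The missing idea is to bound the oscillation by the total variation of the radial profile, not by $2\sup|V|$. By \eqref{pi_general}, $\pi$ depends on $x$ only through $\Vert x\Vert_2^2$. Hence $V(x)=v(\Vert x\Vert_2^2)$ with
\[
v(\rho)=\int_0^\rho h(u)\,du,\qquad h(u):=\Xi(g^{-1}(\mu-1-u))-\Xi(0).
\]
By the Lipschitz property and \eqref{pi_bound}, $|h(u)|\le L_{\mathrm{Lip}}(\Xi)\,g^{-1}(\mu-1-u)\le L_{\mathrm{Lip}}(\Xi)\pi(0)e^{-\kappa u}$. Then for all $0\le\rho_1\le\rho_2$,
\[
|v(\rho_2)-v(\rho_1)|\le\int_{\rho_1}^{\rho_2}|h(u)|\,du\le\int_0^\infty|h(u)|\,du\le L_{\mathrm{Lip}}(\Xi)\pi(0)\int_0^\infty e^{-\kappa u}\,du=\frac{L_{\mathrm{Lip}}(\Xi)\pi(0)}{\kappa}.
\]
This gives $\operatorname{osc}V\le L_{\mathrm{Lip}}(\Xi)\pi(0)/\kappa$ with no sign information on $\Xi$. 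It is exactly the paper's argument and delivers \eqref{Poin-const} as stated.
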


\begin{rem}
    It can be seen that the bounds~\eqref{pi_bound} and~\eqref{Poin-const} formally depend on \(\pi(0)\), i.e., the unknown density evaluated at zero. Using the already constructed bound~\eqref{pi_bound}, one can remove this undesired dependency. Indeed, since
    \[
    1=\int_{\R^d} \pi(x)\,dx 
    \leq \pi(0)\int_{\R^d} e^{-\kappa\Vert x\Vert_2^2}\,dx
    =\pi(0)\left(
    \frac{\pi}{\kappa}
    \right)^{d/2},
    \]
    it holds that \(\pi(0)\geq (\kappa/\pi)^{d/2}\). Similarly,
    \[
    1=\int_{\R^d}\pi(x)\,dx
    \geq \pi(0)\int_{\R^d} e^{-K\Vert x\Vert_2^2}\,dx
    \geq \pi(0)
    \left(
    \frac{\pi}{K}
    \right)^{d/2},
    \]
    which yields \(\pi(0)\leq (K/\pi)^{d/2}\). Hence, one obtains slighly looser bounds
    \[
    \left(
    \frac{\kappa}{\pi}
    \right)^{d/2}
    e^{-K\Vert x\Vert_2^2}
    \leq \pi(x)
    \leq \left(
    \frac{K}{\pi}
    \right)^{d/2}
    e^{-\kappa\Vert x\Vert_2^2}
    \]
    and
    \begin{equation}\label{C-Poin2}
    C_{\mathrm{Poin}}
    \leq C_{\mathrm{LSI}}
    \leq \frac{1}{2\Xi(0)}
    \exp\left\{
    \frac{L_{\mathrm{Lip}}(\Xi)K^{d/2}}{\pi^{d/2}\kappa}
    \right\}.
    \end{equation}
\end{rem}

\section{\texorpdfstring{\(\chi^2\)}{Chi-square}-convergence}\label{sec:chisq}
We are now aiming at establishing the convergence of the density \(p_t\) of a solution to the stationary density \(\pi\) in \(\chi^2\)-metric defined by
\[
\chi^2(p_t\Vert\pi)
:= \int_{\R^d} 
    \frac{(p_t(x)-\pi(x))^2}{\pi(x)}\,dx.
\]
To this end, define \(f_t(x):=p_t(x)/\pi(x)\). Differentiating the \(\chi^2\)-distance with respect to time we get
\begin{eqnarray*}
    \frac{\partial}{\partial t} \chi^2(p_t\Vert\pi)
    &=&\frac{\partial}{\partial t}\int_{\R^d} 
    \frac{(p_t(x)-\pi(x))^2}{\pi(x)}\,dx
    =\frac{\partial}{\partial t}\int_{\R^d} 
    (f_t(x)-1)^2\pi(x)\,dx\\
    &=& 2 \int_{\R^d} (f_t(x)-1)\left(\partial_t f_t(x)\right)\pi(x)\,dx
    = 2 \int_{\R^d} (f_t(x)-1)\left(\partial_t p_t(x)\right)\,dx\\
    &=& 2\int_{\R^d} (f_t(x)-1)\Delta p_t(x)\,dx\\
    &&\hspace{3cm}
    +2\int_{\R^d} (f_t(x)-1) \nabla\cdot\left(
    (\nabla\Phi(x))p_t(x)\Xi(p_t(x))
    \right)\,dx\\
    &=:& I_1 + I_2,
\end{eqnarray*}
where the penultimate line uses the Fokker-Planck equation~\eqref{eq:NFPE}. For \(I_1\), formally integrating by parts, we then get 
\begin{eqnarray*}
    I_1 &=& -2\int_{\R^d} \nabla f_t(x)
    \cdot \nabla p_t(x)\,dx,
\end{eqnarray*}
provided that 
\begin{equation}\label{IBP_cond1-1}
    \nabla f_t\cdot \nabla p_t 
    =\left(
    \frac{\Vert \nabla p_t\Vert_2-p_t\nabla p_t\cdot \nabla \log \pi}{\pi}
    \right)
    \in L^1(\R^d)
\end{equation}
and 
\begin{equation}\label{IBP_cond1-2}
   \lim_{R\to\infty}
   \int_{\partial B_R}
   (f_t-1)\nabla p_t\cdot\nu\,dS=0,
   \quad 
   \nu:=x/\Vert x\Vert_2.
\end{equation}
Since \(p_t(x)=f_t(x)\pi(x)\) and therefore 
\[
\nabla p_t(x)
=\nabla f_t(x)\pi(x)
+f_t(x)\nabla \pi(x),
\]
it follows that
\begin{eqnarray*}
    I_1 &=& 
    -2\int_{\R^d} \Vert\nabla f_t(x)\Vert_2^2\pi(x)\,dx
    -2\int_{\R^d} f_t(x)\nabla f_t(x)\nabla \pi(x)\,dx\\
    &=& -2\int_{\R^d} \Vert\nabla f_t(x)\Vert_2^2\pi(x)\,dx
    + 2\int_{\R^d} f_t(x)\nabla f_t(x)\cdot(\nabla \Phi(x))\pi(x)\Xi(\pi(x))\,dx,
\end{eqnarray*}
where the last equality employs the stationary Fokker-Planck equation~\eqref{eq:stat-eq2}. Similarly, for \(I_2\) we have
\begin{eqnarray*}
    I_2 &=& 2\int_{\R^d} (f_t(x)-1) \nabla\cdot\left(
    (\nabla\Phi(x))p_t(x)\Xi(p_t(x))
    \right)\,dx\\
    &=& -2\int_{\R^d} p_t(x)\Xi(p_t(x))\nabla f_t(x)\cdot \nabla\Phi(x)\,dx\\
    &=& -2\int_{\R^d} f_t(x)\pi(x)\Xi(p_t(x))\nabla f_t(x) \cdot \nabla\Phi(x)\,dx,
\end{eqnarray*}
provided that 
\begin{equation}\label{IBP_cond2-1}
   \Xi(p_t)p_t\nabla f_t\cdot \nabla\Phi
   =\Xi(p_t)\left(
   \frac{p_t\nabla p_t-p_t^2\nabla\log\pi}{\pi}
   \right)\cdot
   \nabla\Phi
   \in L^1(\R^d)
\end{equation}
and 
\begin{equation}\label{IBP_cond2-2}
    \lim_{R\to\infty}
    \int_{\partial B_R}
    (f_t-1)(\nabla\Phi)p_t\Xi(p_t)
    \cdot\nu\,dS=0,
    \quad 
   \nu:=x/\Vert x\Vert_2,
\end{equation}
from which
\begin{multline*}
    \frac{\partial}{\partial t}\chi^2(p_t\Vert\pi)
    = -2\int_{\R^d} \Vert \nabla f_t(x)\Vert_2^2\pi(x)\,dx\\
    + 2\int_{\R^d} f_t(x) \pi(x)
    \left(
    \Xi(\pi(x))-\Xi(p_t(x))
    \right)
    \nabla f_t(x)\cdot\nabla\Phi(x)\,dx.
\end{multline*}
Similar to before, since \(\Xi\) is Lipschitz continuous with the constant \(L_{\text{Lip}}(\Xi)\), the latter integral can be upper bounded by
\begin{multline*}
    \left|
    2\int_{\R^d} f_t(x)\pi(x)
    \left(
    \Xi(\pi(x))-\Xi(p_t(x))
    \right)\nabla f_t(x)\cdot\nabla\Phi(x)\,dx
    \right|\\
    \leq 2L_{\text{Lip}}(\Xi)\int_{\R^d} f_t(x)\pi(x)|\pi(x)-p_t(x)|
    \Vert \nabla f_t(x)\Vert_2\Vert\nabla\Phi(x)\Vert_2\,dx\\
    = 2L_{\text{Lip}}(\Xi)\int_{\R^d} p_t(x)\pi(x)|f_t(x)-1|\Vert \nabla f_t(x)\Vert_2\Vert\nabla\Phi(x)\Vert_2\,dx\\
    \leq 2k_1 L_{\text{Lip}}(\Xi)\int_{\R^d} 
    \Vert\nabla f_t(x)\Vert_2^2\pi(x)\,dx\\
    + 2k_2L_{\text{Lip}}(\Xi)\int_{\R^d} \Vert\nabla \Phi(x)\Vert_2^2p_t(x)^2|f_t(x)-1|^2\pi(x)\,dx
\end{multline*}
for any constants \(k_1,k_2>0\) such that \(k_1k_2\geq 1/4\). Since by~\eqref{gaussbound_pt_unif} 
\[
\sup_{t>0,\,x\in\R^d}\Vert\nabla \Phi(x)\Vert_2^2p_t(x)^2
\leq 4c^2\sup_{x\in\R^d}\Vert x\Vert_2^2\exp\{-2\Xi(0)\Vert x\Vert_2^2\}
=\frac{2c^2}{\Xi(0) e},
\]
we arrive at
\begin{eqnarray*}
    \frac{\partial}{\partial t} \chi^2(p_t\Vert\pi)
    &\leq& 2(k_1 L_{\text{Lip}}(\Xi)-1)\int_{\R^d} \Vert \nabla f_t(x)\Vert_2^2\pi(x)\,dx\\
    &&\hspace{4cm} +\frac{4c^2k_2L_{\text{Lip}}(\Xi)}{\Xi(0) e} 
    \int_{\R^d} |f_t(x)-1|^2\pi(x)\,dx\\
    &=&2(k_1 L_{\text{Lip}}(\Xi)-1)\int_{\R^d} \Vert \nabla f_t(x)\Vert_2^2\pi(x)\,dx
    +\frac{4c^2k_2L_{\text{Lip}}(\Xi)}{\Xi(0) e}
    \chi^2(p_t\Vert\pi).
\end{eqnarray*}
Since by Lemma~\ref{LSI} the density \(\pi\) satisfies the Poincar\'e inequality with constant \(C_\mathrm{Poin}\), it follows that, provided \((k_1 L_{\text{Lip}}(\Xi)-1)<0\),
\[
\frac{\partial}{\partial t} \chi^2(p_t\Vert\pi)
\leq\left(
\frac{2(k_1 L_{\text{Lip}}(\Xi)-1)}{C_{\mathrm{Poin}}}
+\frac{4c^2k_2L_{\text{Lip}}(\Xi)}{\Xi(0) e}
\right)\chi^2(p_t\Vert\pi).
\]
Choosing \(k_1:=c\sqrt{C_{\mathrm{Poin}}/(2\Xi(0) e)}\) and \(k_2:=(4k_1)^{-1}\), we then get 
\[
\frac{\partial}{\partial t} \chi^2(p_t\Vert\pi)
\leq \left(
-\frac{2}{C_{\mathrm{Poin}}}
+L_{\mathrm{Lip}}(\Xi)
\sqrt{\frac{8c^2}{\Xi(0) e C_{\mathrm{Poin}}}}
\right)\chi^2(p_t\Vert\pi),
\]
provided that \(L_{\mathrm{Lip}}(\Xi)<\sqrt{2\Xi(0) e/(c^2C_{\mathrm{Poin}})}\). If \(L_{\mathrm{Lip}}(\Xi)\) furthermore satisfies \(L_{\mathrm{Lip}}(\Xi)<\sqrt{\Xi(0) e/(2c^2C_{\mathrm{Poin}})}\), the bracket above is strictly negative, and we conclude that 
\[
\chi^2(p_t\Vert\pi)
\leq e^{-\rho_\chi (t-t_0)}\chi^2(p_{t_0}\Vert\pi)
\]
for some \(t_0>0\) fixed and 
\[
\rho_\chi:=\frac{2}{C_{\mathrm{Poin}}}
-L_{\mathrm{Lip}}(\Xi)
\sqrt{\frac{8c^2}{\Xi(0) e C_{\mathrm{Poin}}}}.
\]

To justify~\eqref{IBP_cond1-1}--\eqref{IBP_cond2-2}, one needs a sufficient exponential decay of \(p_t\) and \(\nabla p_t\). Indeed, let \(m_1,m_2\geq 0\) and \(\alpha_1,\alpha_2>0\) be such that
\[
p_t(x)\lesssim (1+\Vert x\Vert_2)^{m_1}
e^{-\alpha_1\Vert x\Vert_2^2}
\quad \text{and} \quad
\Vert \nabla p_t(x)\Vert_2 
\lesssim (1+\Vert x\Vert_2)^{m_2}
e^{-\alpha_2\Vert x\Vert_2^2}
\]
(the constants in \(\lesssim\) possibly depending on \(t\)). From~\eqref{eq:stat-eq3}, integrating over a path \(\gamma\) from \(0\) to \(x\), we get
    \[
    \log \pi(x)=\log\pi(0)
    -\int_\gamma 2y\Xi(\pi(y))\,dy.
    \]
    Since the integral on the right-hand side is necessarily equal to \((\log\pi(0)-\log\pi(x))\), it does not depend on the choice of \(\gamma\). Fixing the contour of the form 
    \[
    y(s):=sx, \quad s\in[0,1],
    \]
    we get 
    \begin{equation}\label{pi_int}
    \log \pi(x)=\log\pi(0)
    -\int_0 ^1 2s\Vert x\Vert_2^2\Xi(\pi(sx))\,ds,
    \end{equation}
    which, given that \(\Xi(\pi(sx))\to \Xi(0)\) as \(\Vert x\Vert_2\to\infty\), implies that
    \[
    \frac{1}{\pi(x)}
    \lesssim \frac{1}{\pi(0)}
    \exp\{(\Xi(0)+\delta)\Vert x\Vert_2^2\}
    \]
    for any \(\delta>0\). Therefore, to guarantee~\eqref{IBP_cond1-1}-\eqref{IBP_cond2-2}, it is sufficient that
    \begin{eqnarray*}
        \exp\{
        (-2\alpha_2+\Xi(0)+\delta)\Vert x\Vert_2^2
        \}
        +\exp\{
        (-\alpha_1-\alpha_2+\Xi(0)+\delta)\Vert x\Vert_2^2
        \}
        &\in& L^1(\R^d),\\
        \exp\{
        (-2\alpha_1+\Xi(0)+\delta)\Vert x\Vert_2^2
        \}
        +\exp\{
        (-\alpha_1-\alpha_2+\Xi(0)+\delta)\Vert x\Vert_2^2
        &\in& L^1(\R^d),
    \end{eqnarray*}
    which yields the condition
    \begin{equation}\label{a1a2}
\min(\alpha_1,\alpha_2)>\frac{\Xi(0)}{2}.
    \end{equation}
While the bound~\eqref{gaussbound_pt_unif} provides a sufficient rate of \(\Xi(0)\) for \(p_t\), the bound~\eqref{gaussbound_grad} only yields that of \(\Xi(0)/4\) for the gradient. Hence, we now aim at establishing the corresponding estimate having a stronger rate of decay in the exponential.

The key idea is to represent the process~\eqref{eq:MV-SDE} as a sum of a linear part corresponding to the classical Ornstein-Uhlenbeck process and the non-linear input. More precisely, we decompose
\[
dX_t
=-2a_0 X_t dt 
-2(\Xi(p_t(X_t))-a_0)X_t\,dt
+\sqrt{2}\,dW_t,
\quad X_0\sim p_0,
\]
with some \(\kappa\leq a_0\leq K\), obtaining a Duhamel's formula for \(p_t\) involving an Ornstein-Uhlenbeck kernel. Given bounds~\eqref{gaussbound_pt_unif} and~\eqref{gaussbound_grad}, as well as the Gaussian bound on the initial density given by Assumption~\ref{ass:main}, we are then able to improve the rate in upper bound for \(\nabla p_t\).
More precisely, we obtain the following bound.

\begin{thm}\label{thm:grad_pt-bound}
	Assume that Assumption~\ref{ass:main} holds with \(b_0>\Xi(0)\) and \(K/\kappa\leq 2\). Then it holds that
\begin{multline}\label{grad_est}
\Vert \nabla_x p_t(x)\Vert_2
\lesssim 
\frac{c_0 \Xi(0)(1+\Vert x\Vert_2)}{\left(\Xi(0) e^{-4\Xi(0) t}+b_0(1-e^{-4\Xi(0) t})\right)^{(d+1)/2}}\\
\times \frac{1}{\sqrt{1-e^{-4\Xi(0) t}}}
\exp\left\{
-\frac{\Xi(0)b_0}{\Xi(0) e^{-4\Xi(0) t}+b_0(1-e^{-4\Xi(0) t})}\Vert x\Vert_2^2
\right\}\\
+ c_2(d,\kappa,K,b_0,c_0,\Xi(0),L_{\mathrm{Lip}}(\Xi))
(1+\Vert x\Vert_2)^3
\exp\{-\Xi(0)\Vert x\Vert_2^2\}
\end{multline}
for all \((t,x)\in(0,T)\times\R^d\), where \(c_2(d,\kappa,K,b_0,c_0,\Xi(0),L_{\mathrm{Lip}}(\Xi))>0\) is some constant.
\end{thm}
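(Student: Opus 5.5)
We will use the Duhamel decomposition indicated before the statement, with \(a_0:=\Xi(0)\). Let \(L_0 u:=\Delta u+2a_0\nabla\cdot(xu)\) be the Fokker--Planck operator of the Ornstein--Uhlenbeck process \(dY_t=-2a_0Y_t\,dt+\sqrt2\,dW_t\). Its transition density is explicit:
\[
q_t(x,y)=\bigl(2\pi\sigma_t^2\bigr)^{-d/2}\exp\Bigl\{-\frac{\Vert x-e^{-2a_0t}y\Vert_2^2}{2\sigma_t^2}\Bigr\},\qquad \sigma_t^2=\frac{1-e^{-4a_0t}}{2a_0}.
\]
We rewrite \eqref{eq:NFPE} as \(\partial_tp=L_0p+\nabla\cdot F_t\), where \(F_t(x):=2x\bigl(\Xi(p_t(x))-\Xi(0)\bigr)p_t(x)\). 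Theorem~\ref{C2+grad} says \(p\) is locally classical, and by \eqref{gaussbound_pt_unif} and \eqref{gaussbound_grad} both \(p\) and \(F\) have Gaussian decay. This justifies the mild formulation
\[
p_t(x)=\int q_t(y,x)p_0(y)\,dy+\int_0^t\!\!\int q_{t-s}(y,x)\,\nabla\cdot F_s(y)\,dy\,ds.
\]
The kernel here is the forward kernel in \(x\). We integrate by parts in \(y\), which is legitimate because of Gaussian decay, and then differentiate in \(x\). This gives
\[
\nabla p_t(x)=\int\nabla_x q_t(y,x)p_0(y)\,dy-\int_0^t\!\!\int \nabla_x\nabla_y q_{t-s}(y,x)\,F_s(y)\,dy\,ds .
\]
We choose instead to keep the divergence on \(F\), since \(\nabla F\) is controlled by \eqref{gaussbound_grad} together with the Lipschitz bound on \(\Xi\). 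The form we actually use is therefore
\[
\nabla p_t(x)=\int\nabla_xq_t(y,x)p_0(y)\,dy+\int_0^t\!\!\int\nabla_x q_{t-s}(y,x)\,\nabla\cdot F_s(y)\,dy\,ds .
\]
It has only one derivative on the kernel, which costs \((t-s)^{-1/2}\) and is integrable.

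\emph{First term.} We bound \(p_0(y)\le c_0e^{-b_0\Vert y\Vert_2^2}\) and compute the Gaussian convolution explicitly. The OU transport of \(N(0,(2b_0)^{-1})\) is Gaussian with variance
\[
\frac{e^{-4a_0t}}{2b_0}+\sigma_t^2=\frac{a_0e^{-4a_0t}+b_0(1-e^{-4a_0t})}{2a_0b_0}.
\]
This yields exactly the exponent and the \((\cdot)^{-d/2}\) normalisation in the first line of \eqref{grad_est}. Differentiating the kernel produces a factor \(\Vert x-e^{-2a_0t}y\Vert_2/\sigma_t^2\). We bound it by \(\sigma_t^{-1}\) times a Gaussian moment, which after completing the square contributes the extra factor \((1+\Vert x\Vert_2)\), another half power of the bracket, and \((1-e^{-4a_0t})^{-1/2}\).

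\emph{Second term.} This is the main obstacle, since the gain must come from \(\Xi(p_s)-\Xi(0)\) being small. We have
\[
|\nabla\cdot F_s(y)|\lesssim (1+\Vert y\Vert_2)\Bigl(L_{\mathrm{Lip}}(\Xi)\,p_s(y)+p_s(y)+\Vert y\Vert_2\,\Vert\nabla p_s(y)\Vert_2\,L_{\mathrm{Lip}}(\Xi)\,p_s(y)+\Vert y\Vert_2\,|\Xi(p_s)-\Xi(0)|\,\Vert\nabla p_s\Vert_2\Bigr).
\]
We use \(|\Xi(p_s)-\Xi(0)|\le L_{\mathrm{Lip}}(\Xi)p_s\). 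Each term then contains a factor \(p_s\le ce^{-a_0\Vert y\Vert_2^2}\) by Lemma~\ref{lem:unifbound}. In the terms with \(\nabla p_s\), \eqref{gaussbound_grad} supplies an additional \(e^{-a_0\Vert y\Vert_2^2/4}\) at the cost of \(\min(1,\sqrt{s/2})^{-1}\), which is integrable at \(s=0\). Altogether,
\[
|\nabla\cdot F_s(y)|\lesssim \bigl(1+s^{-1/2}\bigr)(1+\Vert y\Vert_2)^3e^{-\alpha\Vert y\Vert_2^2}\quad\text{with some }\alpha\ge a_0 .
\]
The decisive point is that \(e^{-a_0\Vert y\Vert_2^2}\) is exactly the OU-invariant Gaussian profile, so convolving it with \(q_{t-s}\) preserves the rate \(a_0\) uniformly in \(t-s\); the variance stays \((2a_0)^{-1}\). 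We compute \(\int|\nabla_xq_{t-s}(y,x)|(1+\Vert y\Vert_2)^3e^{-a_0\Vert y\Vert_2^2}dy\) by completing squares. It is bounded by
\[
\frac{C e^{-2a_0(t-s)}}{\sqrt{1-e^{-4a_0(t-s)}}}\,(1+\Vert x\Vert_2)^3e^{-a_0\Vert x\Vert_2^2}.
\]
The exponential factor \(e^{-2a_0(t-s)}\) should appear because the derivative falls on the component whose variance has already reached equilibrium; if it does not, we fall back on splitting \(t-s\le1\) and \(t-s>1\).

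It remains to integrate in \(s\). Near \(s=0\) we have \(s^{-1/2}\), near \(s=t\) we have \((t-s)^{-1/2}\), and \(e^{-2a_0(t-s)}\) ensures a bound uniform in \(t\). Collecting constants gives \(c_2\). The assumption \(K/\kappa\le2\) is needed only because Theorem~\ref{C2+grad} and Lemma~\ref{lem:unifbound} are invoked.
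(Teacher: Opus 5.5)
\textbf{The gap is in the Duhamel term for large $t-s$}, exactly at your claimed factor $e^{-2a_0(t-s)}$. That factor does not appear for the $x$-derivative of the kernel. Set $\rho_\tau:=e^{-2a_0\tau}$. Then $\nabla_xq_\tau(x,y)=-\sigma_\tau^{-2}(x-\rho_\tau y)q_\tau(x,y)\to-2a_0x\,(a_0/\pi)^{d/2}e^{-a_0\Vert x\Vert_2^2}$ as $\tau\to\infty$, independently of $y$. So the left-hand side of your bound converges to a nonzero multiple of $\Vert x\Vert_2e^{-a_0\Vert x\Vert_2^2}$, while your right-hand side tends to zero.

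Completing the square shows the structure precisely. Against the weight $e^{-a_0\Vert y\Vert_2^2}$, the $y$-integral is comparable to $(2a_0\Vert x\Vert_2+\rho_\tau/\sigma_\tau)e^{-a_0\Vert x\Vert_2^2}$, and only the second summand decays. Only $\nabla_yq_\tau=\rho_\tau\sigma_\tau^{-2}(x-\rho_\tau y)q_\tau$ carries the factor $\rho_\tau$.

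Consequently, if you keep the divergence on $F$ and put absolute values inside, the contribution of $s\in(0,t-1)$ is of order $\int_0^{t-1}(1+s^{-1/2})\,ds$, which grows linearly in $t$. You therefore get the rate $\Xi(0)$ only with a constant growing in $T$. That would still suffice for the integration-by-parts conditions at fixed $t$, but it is not the theorem, whose $c_2$ is independent of $T$. Your fallback of splitting at $t-s=1$ does not help by itself, since on $t-s>1$ you would still be using the same non-decaying estimate.

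\textbf{What is missing is the cancellation} $\int_{\R^d}\nabla\cdot F_s(y)\,dy=0$, i.e.\ exactly the second representation you wrote down and then discarded. The paper uses your one-derivative form only for $t-s\le1$, where putting both derivatives on the kernel would cost a non-integrable $(t-s)^{-1}$. For $t-s>1$ it moves the divergence onto the kernel: $\nabla_x\int q_{t-s}(x,y)\nabla\cdot F_s(y)\,dy=-\int\nabla_x\nabla_yq_{t-s}(x,y)F_s(y)\,dy$. In that form:
\begin{itemize}
\item $\nabla_y$ supplies the factor $\rho_{t-s}$;
\item the extra $(1-\rho_{t-s}^2)^{-1}$ is bounded by $(1-e^{-4a_0})^{-1}$;
\item $\Vert F_s(y)\Vert_2\le2L_{\mathrm{Lip}}(\Xi)\Vert y\Vert_2p_s(y)^2\lesssim\Vert y\Vert_2e^{-2a_0\Vert y\Vert_2^2}$ needs only Lemma~\ref{lem:unifbound}, so neither \eqref{gaussbound_grad} nor the $s^{-1/2}$ singularity enters.
\end{itemize}
Then $\int_0^{t-1}\rho_{t-s}\,ds\le(2a_0)^{-1}$ yields a $T$-independent constant.

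Two minor points. First, your bound on $|\nabla\cdot F_s|$ has a superfluous factor $(1+\Vert y\Vert_2)$. Directly, $|\nabla\cdot F_s|\le2dL_{\mathrm{Lip}}(\Xi)p_s(p_s+2\Vert y\Vert_2\Vert\nabla p_s\Vert_2)$, which is $(1+\Vert y\Vert_2)^2$ times a Gaussian. With your cube plus the factor $\Vert x-\rho_{t-s}y\Vert_2$ from the kernel, you would end with $(1+\Vert x\Vert_2)^4$ instead of the stated cube. Second, with your definition of $q_t$ the mild formula should read $\int q_t(x,y)p_0(y)\,dy$, not $\int q_t(y,x)p_0(y)\,dy$.
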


Given~\eqref{gaussbound_pt_unif} and~\eqref{grad_est}, the conditions~\eqref{IBP_cond1-1}--\eqref{IBP_cond2-2} are now justified. Combining the additional assumptions of Theorem~\ref{thm:wellposednessMV} and Lemma~\ref{lem:unifbound}, we can now state the result on \(\chi^2\)-convergence.
\begin{assumption}\label{ass2}
    Assume that Assumption~\ref{ass:main} holds with
    \begin{enumerate}
    \item \(K/\kappa\leq 2\);
    \item 
    \(
    L_{\mathrm{Lip}}(\Xi)<
    \sqrt{\frac{\Xi(0) e}{2c^2C_{\mathrm{Poin}}}};
    \)
    \item 
    \(
    b_0>\Xi(0).
    \)
    \end{enumerate}
\end{assumption}

\begin{thm}\label{thm:chisq}
Under Assumptions~\ref{ass:main} and~\ref{ass2} it holds that 
\[
\chi^2(p_t\Vert\pi)
\leq e^{-\rho_\chi (t-t_0)}\chi^2(p_{t_0}\Vert\pi)
\]
for any \(t\geq t_0>0\) and 
\[
\rho_\chi:=\frac{2}{C_{\mathrm{Poin}}}
-L_{\mathrm{Lip}}(\Xi)
\sqrt{\frac{8c^2}{\Xi(0) e C_{\mathrm{Poin}}}}.
\]
\end{thm}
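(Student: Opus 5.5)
The plan is to make rigorous the formal computation already sketched in this section. We fix \(t_0>0\) and show that \(t\mapsto\chi^2(p_t\Vert\pi)\) is absolutely continuous on \([t_0,\infty)\). Its derivative should satisfy
\[
\frac{\partial}{\partial t}\chi^2(p_t\Vert\pi)\leq-\rho_\chi\,\chi^2(p_t\Vert\pi)
\]
for a.e.\ \(t\geq t_0\), and Gr\"onwall's lemma then gives the claim. Since all earlier results are stated on an arbitrary horizon \((0,T)\), we work on \([t_0,T)\) and let \(T\to\infty\). This is allowed because the constants \(c\), \(c_2\), \(C_{\mathrm{Poin}}\) and \(\rho_\chi\) do not depend on \(T\), and the Duhamel term in \eqref{grad_est} is bounded uniformly for \(t\geq t_0\), since there \(1-e^{-4\Xi(0)t}\geq 1-e^{-4\Xi(0)t_0}\).

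\textbf{Step 1 (differentiation under the integral).} By Theorem~\ref{C2+grad}, \(p\) is locally classical. By Lemma~\ref{lem:unifbound}, \(p_t\lesssim e^{-\Xi(0)\Vert x\Vert_2^2}\), and by \eqref{pi_int}, \(1/\pi\lesssim e^{(\Xi(0)+\delta)\Vert x\Vert_2^2}\). Together these give the integrability of \(p_t^2/\pi\), but only at the borderline rate \(e^{\delta\Vert x\Vert_2^2}\), so a direct domination argument fails.

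To avoid this, we truncate. We compute \(\frac{d}{dt}\int_{B_R}(f_t-1)^2\pi\,dx\), which is legitimate because \(p\in C^{1,2}\) on compact sets. We then integrate by parts on \(B_R\) exactly as in the computation above and obtain boundary terms of the form \eqref{IBP_cond1-2} and \eqref{IBP_cond2-2}. Next we integrate in time over \([t_0,t]\) and only afterwards let \(R\to\infty\). This reduces everything to showing that the boundary terms vanish and that the bulk integrands \eqref{IBP_cond1-1} and \eqref{IBP_cond2-1} are in \(L^1\) uniformly for \(s\in[t_0,t]\).

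\textbf{Step 2 (tail control).} Here we apply the criterion \eqref{a1a2}. From \eqref{gaussbound_pt_unif} we have \(\alpha_1=\Xi(0)\). From \eqref{grad_est}, for \(t\geq t_0\), the second term has rate \(\Xi(0)\). The first term has exponent
\[
\frac{\Xi(0)b_0}{\Xi(0)e^{-4\Xi(0)t}+b_0(1-e^{-4\Xi(0)t})}\geq\Xi(0),
\]
because \(b_0>\Xi(0)\) makes the denominator at most \(b_0\). Hence \(\alpha_2=\Xi(0)\), up to polynomial prefactors, with \(t\)-uniform constants on \([t_0,T)\). This gives \(\min(\alpha_1,\alpha_2)=\Xi(0)>\Xi(0)/2\), and the polynomial factors are absorbed by choosing \(\delta\) small.

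The boundary integrals over \(\partial B_R\) are bounded by \(R^{d-1+m}e^{-(2\Xi(0)-\Xi(0)-\delta)R^2}\to0\). By dominated convergence, this yields the integrated identity
\[
\chi^2(p_t\Vert\pi)-\chi^2(p_{t_0}\Vert\pi)=\int_{t_0}^t\Bigl(-2\int\Vert\nabla f_s\Vert_2^2\pi\,dx+2\int f_s\pi\bigl(\Xi(\pi)-\Xi(p_s)\bigr)\nabla f_s\cdot\nabla\Phi\,dx\Bigr)\,ds.
\]
The substitution of \(\nabla\pi=-\nabla\Phi\,\Xi(\pi)\pi\) comes from Lemma~\ref{lem:stationary}: differentiating \eqref{pi_general} gives \(\nabla\pi/(\pi\,\Xi(\pi))=-\nabla\Phi\).

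\textbf{Step 3 (estimate).} We bound the right-hand side exactly as in the computation preceding the theorem. This uses the Lipschitz bound on \(\Xi\) and Young's inequality with \(k_1=c\sqrt{C_{\mathrm{Poin}}/(2\Xi(0)e)}\) and \(k_2=(4k_1)^{-1}\). It also uses \(\sup_x\Vert\nabla\Phi\Vert_2^2p_s^2\leq 2c^2/(\Xi(0)e)\) from \eqref{gaussbound_pt_unif}.

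Assumption~\ref{ass2}(2) gives \(k_1L_{\mathrm{Lip}}(\Xi)<1\), so the Dirichlet term has a negative coefficient. The Poincar\'e inequality of Lemma~\ref{LSI}, applied to \(f_s-1\) (which has \(\pi\)-mean zero), then bounds it by \(-\chi^2/C_{\mathrm{Poin}}\). Here \(f_s\in H^1(\pi)\) by Step 2. The result is an integral inequality with integrand \(-\rho_\chi\chi^2(p_s\Vert\pi)\), where \(\rho_\chi>0\) again by Assumption~\ref{ass2}(2). Gr\"onwall's lemma in integral form then concludes the proof.

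\textbf{Main obstacle.} The delicate point is Step 1–2. The tails of \(p_t^2/\pi\) and \(\Vert\nabla p_t\Vert_2^2/\pi\) are only barely integrable: the exponent margin is essentially \(\Xi(0)-\delta\) against the sharp rate of \(\pi\). We therefore need the sharp lower bound on \(\pi\) from \eqref{pi_int}, with \(\Xi(\pi(sx))\to\Xi(0)\), rather than the cruder \(e^{-K\Vert x\Vert_2^2}\) from \eqref{pi_bound}. If this margin proved insufficient, the first thing to try would be to use \(\Xi(\pi(y))\leq\Xi(0)+L_{\mathrm{Lip}}(\Xi)\pi(y)\) and the Gaussian decay of \(\pi\) to get \(1/\pi\lesssim e^{\Xi(0)\Vert x\Vert_2^2}\) with no \(\delta\) at all.

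We must also ensure that the constants in the estimates of Step 2 are locally uniform in \(s\) on \([t_0,t]\). This is needed for dominated convergence in time, and it is exactly why the theorem is stated from a fixed \(t_0>0\) rather than from \(0\): the first term in \eqref{grad_est} blows up as \(t\downarrow0\).
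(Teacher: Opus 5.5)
Your proposal is correct and follows the paper's own route. It uses the same formal $\chi^2$-dissipation computation with $\nabla\pi=-\nabla\Phi\,\Xi(\pi)\pi$, justified by the sharp rates in \eqref{gaussbound_pt_unif} and \eqref{grad_est} together with the lower bound on $\pi$ from \eqref{pi_int} via criterion \eqref{a1a2}. It then applies Young's inequality with the same $k_1,k_2$, the Poincar\'e inequality of Lemma~\ref{LSI}, and Gr\"onwall. Your truncation to $B_R$ followed by time integration is a more careful version of the paper's formal differentiation, and it usefully avoids needing control of $\partial_t p_t$ at infinity.

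Two small corrections. First, $p_t^2/\pi$ is not borderline: it decays like $e^{-(\Xi(0)-\delta)\Vert x\Vert_2^2}$, and only $f_t$ may grow like $e^{\delta\Vert x\Vert_2^2}$. Second, with a negative rate, Gr\"onwall must be used in differential form, through the absolute continuity your identity gives (since $t_0>0$ is arbitrary), not on a single integral inequality from a fixed $t_0$.
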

\begin{rem}
Given~\eqref{C-Poin2}, for the second condition of Assumption~\ref{ass2} to hold it is sufficient that
\begin{equation}\label{Lip_aux}
L_{\mathrm{Lip}}(\Xi)
<\frac{\sqrt{e}\Xi(0)}{c}
\exp\left\{
-\frac{L_{\mathrm{Lip}}(\Xi) K^{d/2}}{2\pi^{d/2}\kappa}
\right\}.
\end{equation}
Choosing some \(\zeta\in(0,1)\) and requiring that \(L_{\mathrm{Lip}}(\Xi)K^{d/2}/(2\pi^{d/2}\kappa)<-\log\zeta\), we then get that 
\[
\frac{\sqrt{e}\Xi(0)}{c}
\exp\left\{
-\frac{L_{\mathrm{Lip}}(\Xi) K^{d/2}}{2\pi^{d/2}\kappa}
\right\}
>\zeta \frac{\sqrt{e}\Xi(0)}{c},
\]
so for~\eqref{Lip_aux} to be satisfied it is enough that \(L_{\mathrm{Lip}}(\Xi)<\zeta\sqrt{e}\Xi(0)/c\), which leads to a somewhat simpler condition
\[
L_{\mathrm{Lip}}(\Xi)
< \min\left(
\zeta \frac{\sqrt{e}\Xi(0)}{c},\,
\frac{-2\pi^{d/2}\kappa\log\zeta}{K^{d/2}}
\right).
\]
\end{rem}
\begin{rem}
Assumptions~\ref{ass:main} and~\ref{ass2} describe \(\Xi\) as a sufficiently smooth function having a rather small perturbation. Primary examples of such functions are bounded perturbations of a fixed constant. For instance, one can consider an additive perturbation
\(\Xi(u):=\kappa_1+\epsilon_1 \arctan(u)\), \(u\in\R_+\), for some \(\kappa_1>0\) and \(\epsilon_1>0\). Then \(\Xi\in C^1(\R_+)\), is Lipschitz with constant \(L_{\mathrm{Lip}}(\Xi)=\epsilon_1\), and satisfies
\[
\kappa 
\leq \kappa_1+\epsilon_1 \arctan(u)
\leq \kappa_1+\frac{\pi\epsilon_1}{2}=:K_1,
\]
where \(K_1/\kappa_1\leq 2\), provided that \(\epsilon_1<2\kappa_1/\pi\). Since \(L_{\mathrm{Lip}}(\Xi)=\epsilon_1\), requiring also that, e.g.,
\[
\epsilon_1
<\min\left(
\frac{\sqrt{e}\kappa_1}{c},\,
\frac{2\log 2 \pi^{d/2}\kappa_1}{(2\kappa_1)^{d/2}}
\right)
<\min\left(
\frac{\sqrt{e}\kappa_1}{c},\,
\frac{2\log 2 \pi^{d/2}\kappa_1}{(\kappa_1+\pi\epsilon_1/2)^{d/2}}
\right),
\]
we obtain a function satisfying our assumptions.
An example of the multiplicative perturbation is given by 
\[
\Xi(u):=\kappa_2\exp\left\{
\frac{\epsilon_2}{2}(1+\sin u)
\right\},
\quad u\in\R_+,
\]
with some \(\kappa_2>0\) and \(\epsilon_2>0\). Clearly, \(\Xi\in C^1(\R_+)\), and, since \((1+\sin u)/2\leq 1\) for any \(u\geq 0\),
\[
\kappa_2 \leq \Xi(u)
\leq \kappa_2 e^{\epsilon_2}=:K_2.
\]
Since 
\[
|\Xi'(u)|
\leq \frac{\kappa_2\epsilon_2}{2}|\cos u|
\exp\left\{
\frac{\epsilon_2}{2}(1+\sin u)
\right\}
\leq \frac{\kappa_2\epsilon_2 e^{\epsilon_2}}{2},
\]
it holds that \(L_{\mathrm{Lip}}(\Xi)\leq \kappa_2\epsilon_2 e^{\epsilon_2}/2\).
Requiring for simplicity \(\epsilon_2\leq\log 2\), one then has \(K_2\leq 2\kappa_2\) and \(L_{\mathrm{Lip}}(\Xi)\leq \kappa_2\epsilon_2\), from which 
\[
\kappa_2\epsilon_2 
< \frac{\sqrt{e}\kappa_2}{2c}
\leq \frac{\kappa_2e^{(\epsilon_2+1)/2}}{2c}
\quad\text{implies}\quad
L_{\mathrm{Lip}}(\Xi)<\frac{\sqrt{e}\Xi(0)}{2c},
\]
so that~\eqref{Lip_aux} is satisfied, e.g., if
\[
\epsilon<
\min\left(
\log 2,
\frac{e^{(\epsilon_2+1)/2}}{2c},
\frac{2\pi^{d/2}\log 2}{(2\kappa_2)^{d/2}}
\right).
\]

\end{rem}

\section{Moment bounds}
We also establish the upper bounds for the polynomial and integrated exponential moments of the solution \(X_t\) to~\eqref{eq:MV-SDE}.
\begin{lem}\label{momentcontrol}
    Under Assumption~\ref{ass:main} it holds that
    \begin{equation}\label{moments}
        \sup\limits_{t\geq 0} 
        \left(
        \mathbb{E}\left[
        \Vert X_t\Vert_2 ^p
        \right]
        \right)^{1/p}
        \leq \left(
        \sqrt{3}
        \left(
            \mathbb{E}\left[
        \Vert X_0\Vert_2^p
        \right]
        \right)^{1/p}
        +6 
        \sqrt{\frac{6C_{\mathrm{BDG}}}{\varkappa}}
        + 
        \sqrt{\frac{3d}{\varkappa}}
        \right)
        \sqrt{p}, 
        \quad p\geq 2,
    \end{equation}
    where \(C_{\mathrm{BDG}}\) is the constant from the Burkholder-Davis-Gundy inequality depending only on \(p\).
\end{lem}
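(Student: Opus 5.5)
We prove the uniform moment bound \eqref{moments} by treating \eqref{eq:MV-SDE} as a linear SDE with a random, adapted, strictly positive mean-reversion rate. Set \(a_t:=\Xi(p_t(X_t))\in[\kappa,K]\), so that \(dX_t=-2a_tX_t\,dt+\sqrt2\,dW_t\). Since \(p_t\) is the density of the unique strong solution from Theorem~\ref{thm:wellposednessMV}, \(a_t\) is an adapted process bounded below by \(\kappa\) (we read \(\varkappa\) in \eqref{moments} as this lower bound). Variation of constants with the integrating factor \(\exp\{2\int_s^t a_r\,dr\}\) gives
\[
X_t=e^{-2\int_0^t a_r\,dr}X_0+\sqrt2\int_0^t e^{-2\int_s^t a_r\,dr}\,dW_s .
\]
The first term is bounded in norm by \(\|X_0\|_2\), since \(a_r\geq\kappa\). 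So \(\|e^{-2\int_0^t a_r dr}X_0\|_{L^p}\leq \|X_0\|_{L^p}\), uniformly in \(t\).

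The stochastic convolution is the delicate term. Its integrand \(e^{-2\int_s^t a_r\,dr}\) depends on \(t\) and is not adapted in \(s\), so BDG cannot be applied directly. We therefore work with Itô's formula for \(\|X_t\|_2^2\) instead:
\[
d\|X_t\|_2^2=\bigl(-4a_t\|X_t\|_2^2+2d\bigr)dt+2\sqrt2\,X_t\cdot dW_t .
\]
Variation of constants on this scalar equation gives
\[
\|X_t\|_2^2=e^{-4\int_0^t a}\|X_0\|_2^2+2d\int_0^t e^{-4\int_s^t a}\,ds+2\sqrt2\int_0^t e^{-4\int_s^t a}X_s\cdot dW_s .
\]
The middle term is at most \(d/(2\kappa)\). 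The last term still has a \(t\)-dependent integrand, but the same obstruction can be removed by applying Itô to \(e^{4\kappa t}\|X_t\|_2^2\) and using \(-4a_t+4\kappa\leq0\). This yields
\[
e^{4\kappa t}\|X_t\|_2^2\leq \|X_0\|_2^2+\frac{d}{2\kappa}\,e^{4\kappa t}+M_t,\qquad M_t:=2\sqrt2\int_0^t e^{4\kappa s}X_s\cdot dW_s .
\]
Taking \(L^{p/2}\)-norms and applying BDG gives
\[
\|M_t\|_{L^{p/2}}\leq C_{\mathrm{BDG}}\Bigl\|\Bigl(8\int_0^t e^{8\kappa s}\|X_s\|_2^2\,ds\Bigr)^{1/2}\Bigr\|_{L^{p/2}},
\]
where \(C_{\mathrm{BDG}}\) carries the \(\sqrt p\) growth. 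Bounding the integral by \(\sup_{s\leq t}e^{4\kappa s}\|X_s\|_2\cdot\bigl(\int_0^t e^{4\kappa s}\,ds\bigr)^{1/2}\) and using Young's inequality \(ab\leq \tfrac12\eta a^2+\tfrac12\eta^{-1}b^2\) lets us absorb the supremum into the left-hand side. This requires first localizing with stopping times and a supremum version of the estimate, then passing to the limit by Fatou. After dividing by \(e^{4\kappa t}\), we expect
\[
\|X_t\|_{L^p}^2\lesssim \|X_0\|_{L^p}^2+\frac{d}{\kappa}+\frac{C_{\mathrm{BDG}}}{\kappa},
\]
uniformly in \(t\). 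Taking square roots and using \(\sqrt{a+b+c}\leq\sqrt a+\sqrt b+\sqrt c\) produces the three summands in \eqref{moments}. The factor \(\sqrt3\) comes from \((x+y+z)^2\leq3(x^2+y^2+z^2)\), and the explicit \(\sqrt p\) comes from tracking the \(p\)-dependence of the BDG constant.

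The main obstacle is the absorption step: the supremum over \([0,t]\) must be controlled without losing the uniformity in \(t\). The weight \(e^{4\kappa s}\) is what makes this work, because \(\int_0^t e^{8\kappa s}\,ds\leq e^{8\kappa t}/(8\kappa)\) matches the left-hand weight exactly. Everything else is routine bookkeeping of constants, and the only property of \(\Xi\) used is \(\Xi\geq\kappa\). In particular, neither the density bounds nor Lipschitz continuity of \(\Xi\) enter the argument.
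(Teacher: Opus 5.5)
This is essentially the paper's argument: Itô's formula for an exponentially weighted $\|X_t\|_2^2$ (the paper uses $\tfrac12 e^{2\varkappa t}\|X_t\|_2^2$, you use $e^{4\kappa t}\|X_t\|_2^2$), dropping the drift via $\Xi\geq\kappa$, BDG for the running supremum of the martingale, bounding the quadratic variation by a weighted supremum times $\int_0^t e^{cs}\,ds$, and absorbing via Young/AM--GM. Your stopping-time localization also supplies the finiteness that the paper's absorption step tacitly assumes.

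One bookkeeping fix: the factor to extract from $\bigl(\int_0^t e^{8\kappa s}\|X_s\|_2^2\,ds\bigr)^{1/2}$ is $\sup_{s\leq t}e^{2\kappa s}\|X_s\|_2$, not $\sup_{s\leq t}e^{4\kappa s}\|X_s\|_2$, so that Young's inequality returns exactly $\sup_{s\leq t}e^{4\kappa s}\|X_s\|_2^2$. Done this way, with Minkowski in $L^{p/2}$, one gets $\|X_t\|_{L^p}\leq\sqrt2\|X_0\|_{L^p}+\sqrt{d/\kappa}+\sqrt{2pC_{\mathrm{BDG}}/\kappa}$ under the paper's normalization of the BDG constant, which lies below the stated bound.
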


\begin{lem}\label{lem:expbound_X}
    Under Assumption~\ref{ass:main}, assuming also that \(b_0>\kappa/2\), it holds that
    \[
    \E\left[ 
    \exp\left\{
    \lambda\int_0^T \Vert X_t\Vert_2^2\,dt
    \right\}
    \right]
    \leq e^{\kappa dT} 
    \E\left[\exp\left\{\frac{\kappa}{2}\Vert X_0\Vert_2^2
    \right\}\right]
    <\infty
    \]
    for any \(0\leq \lambda\leq\kappa^2\) and \(T>0\) fixed.
\end{lem}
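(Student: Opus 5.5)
Apply Itô's formula to the exponential functional. Set \(Y_t:=\frac{\kappa}{2}\Vert X_t\Vert_2^2+\lambda\int_0^t\Vert X_s\Vert_2^2\,ds\); the goal is to show that \(e^{Y_t-\kappa d t}\) is a nonnegative local supermartingale. By Itô's formula for \eqref{eq:MV-SDE} with \(\nabla\Phi(x)=2x\),
\[
d\Bigl(\tfrac{\kappa}{2}\Vert X_t\Vert_2^2\Bigr)
=\Bigl(-2\kappa\,\Xi(p_t(X_t))\Vert X_t\Vert_2^2+\kappa d\Bigr)dt+\sqrt2\,\kappa X_t\cdot dW_t .
\]
The quadratic variation of the martingale part is \(2\kappa^2\Vert X_t\Vert_2^2\,dt\). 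Hence
\[
d e^{Y_t}=e^{Y_t}\Bigl(\bigl(-2\kappa\,\Xi(p_t(X_t))+\lambda+\kappa^2\bigr)\Vert X_t\Vert_2^2+\kappa d\Bigr)dt+dM_t .
\]
Using \(\Xi\geq\kappa\) and \(\lambda\leq\kappa^2\), the bracket is at most \((-2\kappa^2+2\kappa^2)\Vert X_t\Vert_2^2+\kappa d=\kappa d\). So \(Z_t:=e^{Y_t-\kappa d t}\) has nonpositive drift. Indeed, the exponent weight \(\kappa/2\) is chosen exactly so that the Itô correction \(\kappa^2\) and the admissible \(\lambda\) are absorbed by the confinement \(2\kappa^2\).

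Next comes localization. Let \(\tau_n:=\inf\{t:\Vert X_t\Vert_2\geq n\}\). The stopped local martingale \(M_{t\wedge\tau_n}\) is a true martingale, since its integrand is bounded on \(\{t\le\tau_n\}\), so
\[
\E\bigl[Z_{T\wedge\tau_n}\bigr]\leq \E[Z_0]=\E\bigl[e^{\frac{\kappa}{2}\Vert X_0\Vert_2^2}\bigr].
\]
Drop the nonnegative term \(\frac{\kappa}{2}\Vert X_{T\wedge\tau_n}\Vert_2^2\) and use \(e^{-\kappa d(T\wedge\tau_n)}\geq e^{-\kappa d T}\). This gives
\[
\E\Bigl[\exp\Bigl\{\lambda\int_0^{T\wedge\tau_n}\Vert X_t\Vert_2^2dt\Bigr\}\Bigr]\leq e^{\kappa dT}\E\bigl[e^{\frac{\kappa}{2}\Vert X_0\Vert_2^2}\bigr].
\]
Continuity of paths gives \(\tau_n\to\infty\), so letting \(n\to\infty\) by monotone convergence (Fatou) yields the claimed bound.

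Finiteness of the right-hand side follows from Assumption~\ref{ass:main}(3). Since \(p_0(x)\leq c_0 e^{-b_0\Vert x\Vert_2^2}\) and \(b_0>\kappa/2\),
\[
\E\bigl[e^{\frac\kappa2\Vert X_0\Vert_2^2}\bigr]\leq c_0\int_{\R^d} e^{-(b_0-\kappa/2)\Vert x\Vert_2^2}dx=c_0\Bigl(\frac{\pi}{b_0-\kappa/2}\Bigr)^{d/2}<\infty .
\]

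The main subtlety is making sure Itô's formula applies with the density-dependent drift. This is taken from Theorem~\ref{thm:wellposednessMV}: the drift is \(-2\Xi(p_t(X_t))X_t\), which is progressively measurable and has linear growth, and \(X\) is a continuous semimartingale. The only property of \(\Xi(p_t(\cdot))\) the argument uses is the pointwise lower bound \(\kappa\), so no regularity of \(p_t\) is needed. The localization step is standard.
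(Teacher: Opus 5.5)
Your proof is correct and follows the paper's argument: Itô's formula for \(\exp\{\lambda\int_0^t\Vert X_s\Vert_2^2\,ds+a\Vert X_t\Vert_2^2\}\) with \(a=\kappa/2\), the lower bound \(\Xi\geq\kappa\) to cancel the quadratic drift term, localization by \(\tau_n\), and monotone convergence. The differences are cosmetic: you absorb the factor \(e^{-\kappa d t}\) to get a supermartingale directly instead of applying Gr\"onwall, and you explicitly check finiteness of the right-hand side from \(b_0>\kappa/2\), which the paper leaves implicit.
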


\section{Proofs}\label{sec:proofs}

\subsection{Proof of Theorem~\ref{thm:wellposednessMV}}
For some \(T>0\), let \(q:=\{q_t\}_{0\leq t\leq T}\) be a density flow and consider the SDE
    \begin{equation}\label{MV_linear}
    dX_t^q
    =b^q (t, X_t^q)\,dt
    +\sqrt{2}dW_t,
    \quad X_0^q\sim p_0,
    \end{equation}
    with \(b^q (t, x):=-2x\Xi(q_t(x))\). On the same probability space, define the Ornstein-Uhlenbeck process 
    \begin{equation}\label{OU_a0}
    dY_t
    =-2a_0 Y_t\,dt + \sqrt{2}dW_t,
    \quad Y_0\sim p_0,
    \end{equation}
    where \(\kappa< a_0\leq K\). Defining 
    \[
    \theta_t^Y:=\sqrt{2}Y_t\left(
    a_0
    -\Xi(q_t(Y_t))
    \right).
    \]
    By Assumption~\ref{ass:main}, it holds that
    \[
    \Vert \theta_t^Y\Vert_2^2
    \leq 2(K-\kappa)^2\Vert Y_t\Vert_2^2, 
    \]
    and by Lemma~\ref{OU-expmoments}, the Novikov condition is satisfied, e.g., when \((K-\kappa)^2\leq  \kappa^2< a_0^2\), i.e., when \(K/\kappa\leq 2\). Hence, the process 
    \[
    \mathcal{E}_t^Y
    :=\exp\left\{
    \int_0^t \theta_s^Y\cdot dW_s
    -\frac{1}{2} \int_0^t \Vert \theta_s^Y\Vert_2^2\,ds
    \right\}
    \]
    is an \(\mathcal{F}_t\)-martingale under \(\mathsf{P}\), and there exists another probability measure \(\widetilde{\mathsf{P}}\) with \(d\widetilde{\mathsf{P}}/d\mathsf{P}_{\mathcal{F}_T}=\mathcal{E}_T^Y\) under which \(Y\) has the same law as \(X^q\) under \(\mathsf{P}\), i.e., \(\mathcal{L}_{\widetilde{\mathsf{P}}}(Y_{\cdot \wedge T})=\mathcal{L}_{\mathsf{P}}(X^q_{\cdot \wedge T})\). Hence, any weak solution to~\eqref{MV_linear} can be transformed to that of~\eqref{OU_a0} by the inverse Girsanov density, and by the uniqueness of the Ornstein-Uhlenbeck martingale problem, the constructed weak solution is unique. Moreover, since it holds \(\widetilde{\mathsf{P}}\ll \mathsf{P}\), it follows that \(\mathcal{L}_{\mathsf{P}}(X^q_{\cdot \wedge T})\ll \mathcal{L}_{\mathsf{P}}(Y_{\cdot \wedge T})\), and \(X^q\) possesses a (unique) density \(p^q\).

    Since by the same argument as in Lemma~\ref{momentcontrol}, we have that \(\sup_{t\geq 0}\E[\Vert X_t^q\Vert_2^p]<\infty\) for all \(p\geq 1\), provided that \(b_0>\kappa/2\), it also holds that
    \begin{multline*}
        \int_0 ^T 
        \int_{\R^d}
        \Vert b^q(t,x)\Vert_2^p
        p^q_t(x)\,dx
        =2\int_0^T 
        \int_{\R^d}
        \Vert x\Vert_2^p \Xi(q_t(x))^p
        p^q_t(x)\,dx\\
        \leq 2K^p
        \int_0^T
        \E[\Vert X_t^q\Vert_2^p]
        \,dt
        <\infty
    \end{multline*}
    for all \(p\geq 1\), in particular, \(p>d+2\). Hence, by Proposition 6.5.1 in~\cite{bogachev2022fokker} it follows that \(p^q\in C^{\beta/2,\beta}_{\mathrm{loc}}((0,T)\times\R^d)\) for some \(\beta\in(0,1)\). 
    Moreover, noticing that by Assumption~\ref{ass:main} 
    \[
    \langle 
    b^q(t,x),x
    \rangle
    =-2\Xi(q_t(x))\Vert x\Vert_2^2
    \leq -2\kappa \Vert x\Vert_2^2
    \]
    and
    \[
    \Vert b^q(t,x)\Vert_2
    \leq 2K\Vert x\Vert_2,
    \]
    for all \(x\in\R^d\), we get by Example 7.3.10 in~\cite{bogachev2022fokker} that
    \[
    p_t^q(x)\leq c_T\exp\{-\gamma_T\Vert x\Vert_2^2\},
    \quad \forall\,(t,x)\in(0,T)\times\R^d,
    \]
    for some constant \(c_T>0\) and \(\gamma_T\in(0,\kappa/2)\),
    from which it follows that
    \[
    \Vert x\Vert_2 
    p_t^q(x)
    \leq c\Vert x\Vert_2
    \exp\{-\gamma_T\Vert x\Vert_2^2\},
    \quad \forall\,(t,x)\in(0,T)\times\R^d,
    \]
    i.e.,
    \[
    \sup_{t\in(0,T),\,x\in\R^d}
    \Vert x\Vert_2 
    p_t^q(x)
    \leq \frac{c_T}{\sqrt{2e\gamma_T}}.
    \]
    Define now the class of densities
    \begin{multline*}
    \mathcal{X}_{c_T,\gamma_T}
    :=\left\{
    f \in\mathcal{Y}_{\beta}
    \colon
    f_t(x)\geq 0
    \text{ a.e.\ in } x,
    f_t(x)\leq 
    c_T\exp\{-\gamma_T\Vert x\Vert_2^2\}
    \text{ a.e.\ in } x,\right.\\ \left.
    \int_{\R^d} f_t(x)\,dx=1
    \text{ for a.e } t
    \right\},
    \end{multline*}
    where \(\mathcal{Y}_\beta:=L^{\infty}((0,T); L^2(\R^d, \Vert x\Vert_2\,dx))\), and the norm 
    \[
    \Vert f\Vert_\beta
    :=\esssup_{t\in(0,T)} 
    e^{-\beta t/2} \left(
    \int_{\R^d}
    \Vert x\Vert_2
    f_t^2(x)
    \,dx
    \right)^{1/2}
    \]
    with the induced metric
    \[
    d_\beta(f^1, f^2)
    :=\Vert f^1-f^2\Vert_\beta,
    \]
   \(\beta>0\) being is a fixed parameter the choice of which will be precised later.
    Clearly, the space \((\mathcal{Y}_\beta, \Vert\cdot\Vert_\beta)\) is a Banach space. In addition, one can observe that for any \(q\in\mathcal{X}_{c_T,\gamma_T}\), the map \(\Phi(q)=p^q\) satisfies \(\Phi(\mathcal{X}_{c_T,\gamma_T})\subset \mathcal{X}_{c_T,\gamma_T}\). Hence, existence of a weak solution to~\eqref{eq:MV-SDE} will follow from the Banach's fixed point theorem, provided that \(\mathcal{X}_{c_T,\gamma_T}\) is closed in \(\mathcal{Y}_\beta\) (which would imply that \((\mathcal{X}_{c_T,\gamma_T}, d_\beta)\) is a complete metric space), and \(\Phi\) is a contraction. 

    First, let us show the former. To this end, define an auxiliary class
    \begin{multline*}
    K_{c_T,\gamma_T}:=
    \left\{
    f\in L^2(\Vert x\Vert_2\,dx)\colon
    f(x)\geq 0
    \text{ a.e.},
    f(x) \leq c_T\exp\{-\gamma_T\Vert x\Vert_2^2\}
    \text{ a.e.},\right. \\ \left.
    \int_{\R^d} f(x)\,dx = 1
    \right\}.
    \end{multline*}
    It can be seen that \(K_{c_T,\gamma_T}\) is closed in \(L^2(\Vert x\Vert_2\,dx)\). Indeed, let \(\{f_n\}_{n\geq 1}\) be a sequence with \(f_n\in K_{c_T,\gamma_T}\) such that \(f_n\to f\) as \(n\to\infty\) in \(L^2(\Vert x\Vert_2\,dx)\). Then there exists a subsequence \(\{f_{n_k}\}_{k\geq 1}\) such that \(f_{n_k}\to f\) a.e., and, since for all \(k\geq 1\) it holds that \(f_{n_k}\in K_{c_T,\gamma_T}\), this implies that \(f_{n_k}(x)\leq c_T e^{-\gamma_T\Vert x\Vert_2^2}\) a.e., from which \(f(x)\leq c_Te^{-\gamma_T \Vert x\Vert_2^2}\) by the pointwise limit. Observing that \(c_Te^{-\gamma_T\Vert x\Vert_2^2}\in L^1(\R^d)\), we further have, by the dominated convergence, 
    \[
    \int_{\R^d} f(x)\,dx
    =\lim_{k\to\infty}
    \int_{\R^d}
    f_{n_k}(x)\,dx
    =1,
    \]
    i.e., \(f\in K_{c_T,\gamma_T}\), so \(K_{c_T,\gamma_T}\) is closed in \(L^2(\Vert x\Vert_2\,dx)\). Coming back to the original aim of showing that \(\mathcal{X}_{c_T,\gamma_T}\) is closed, take now a sequence \(\{q^n\}_{n\geq 1}\) with \(q^n\in\mathcal{X}_{c_T,\gamma_T}\) such that \(q^n\to q\) in \(\mathcal{Y}_\beta\) as \(n\to\infty\). By definition, this means
    \[
    \Vert q^n-q\Vert_\beta
    =\sup_{t\in[0,T]}
    e^{-\beta t/2}
    \Vert q^n_t-q_t \Vert_{L^2(\Vert x\Vert_2\,dx)}
    \to 0
    \quad \text{ as } n\to\infty,
    \]
    and, for any \(t\in[0,T]\) fixed,
    \[
    \Vert q_t^n-q_t\Vert_{L^2(\Vert x\Vert_2\,dx)}
    :=\left(
    \int_{\R^d}
    \Vert x\Vert_2
    \left(
    q_t^n(x)-q_t(x)
    \right)^2
    \,dx
    \right)^{1/2}
    \to 0
    \quad \text{ as } n\to\infty.
    \]
    But since for any \(t\in[0,T]\) fixed \(q_t^n\) belongs to a class \(K_{c_T,\gamma_T}\) which is closed in \(L^2(\Vert x\Vert_2\,dx)\), the latter convergence implies that the limit \(q_t\in K_{c_T,\gamma_T}\), i.e., \(q\in\mathcal{X}_{c_T,\gamma_T}\). Hence, \(\mathcal{X}_{c_T,\gamma_T}\) is closed in \(\mathcal{Y}_\beta\), and \((\mathcal{X}_{c_T,\gamma_T}, d_\beta)\) is a complete metric space. 
    
    Now let us turn towards the contractivity of \(\Phi\). Let \(q^1,q^2\in\mathcal{X}_{c_T,\gamma_T}\), \(p^{q_i}:=\Phi(q^i)\), and denote by \(P^{q_i}\) the law \(\mathcal{L}_{\mathrm{P}}(X^{q_i}_{\cdot\wedge T})\) with density \(p^{q_i}\), \(i=1,2\). Since for
    \[
    \theta_t^{q_1}
    :=\sqrt{2}
    X_t^{q_2}\left(
    \Xi(q_t^2(X_t^{q_2}))
    -\Xi(q_t^1(X_t^{q_2}))
    \right),
    \quad \forall\,0< t< T,
    \]
    it holds that
    \[
    \Vert \theta_t^{q_1}\Vert_2^2
    \leq 2(K-\kappa)^2
    \Vert X_t^{q_2}\Vert_2^2,
    \]
    by the same argument as in Lemma~\ref{lem:expbound_X}, we get that the process 
    \[
    \mathcal{E}_t^{q_1}
    :=\exp\left\{
    \int_0^t \theta_s^{q_1}\cdot dW_s
    -\frac{1}{2}\int_0^t 
    \Vert \theta_s^{q_1}\Vert_2^2\,ds
    \right\}
    \]
    is an \(\F_t\)-martingale under \(\mathrm{P}\), provided that \((K-\kappa)^2\leq \kappa^2\), and hence
    \[
    \widetilde{W}^{q_1}_t
    :=-\int_0^t \theta_s^{q_1}\,ds
    +W_t
    \]
    is an \(\F_t\)-Brownian motion under the new measure \(\mathrm{P}_1\) with \(d\mathrm{P}_1/d\mathrm{P}|_{\F_T}=\mathcal{E}_T^{q_1}\). The process \(X^{q_2}\) solves the \(q_1\)-linearised equation under this new measure. This allows to rewrite, for all \(0\leq t\leq T\),
    \begin{eqnarray*}
        KL(p^{q^1}_t\Vert p^{q^2}_t)
        &\leq&
        KL(P^{q^1}_{[0,t]}\Vert P^{q^2}_{[0,t]})
        =\E_{\mathrm{P}_1}\left[ 
        \log \mathcal{E}_t^{q_1}
        \right]\\
        &=&\E_{\mathrm{P}_1}\left[ 
        \int_0^t \theta_s^{q_2}\cdot dW_s
        -\frac{1}{2}\int_0^t 
        \Vert \theta_s^{q_2}\Vert_2^2\,ds
        \right]\\
        &=&
        \E_{\mathrm{P}_1}\left[ 
        \int_0^t \theta_s^{q_2}\cdot d\widetilde{W}^{q_2}_s
        \right]
        +\E_{\mathrm{P}_1}\left[
        \frac{1}{2}\int_0^t 
        \Vert \theta_s^{q_2}\Vert_2^2\,ds
        \right]\\
        &=&
        \E_{\mathrm{P}_1}\left[
        \int_0^t 
        \Vert X_s^{q_2}\Vert_2^2
        \left(
        \Xi(q_s^2(X_s^{q_2}))
        -\Xi(q_s^1(X_s^{q_2}))
        \right)^2
        \,ds
        \right]\\
        &\leq& 
        L^2_{\text{Lip}}(\Xi)
        \int_0^t 
        \int_{\R^d}
        \Vert x\Vert_2^2
        \left(
        q^2_s(x)-q^1_s(x)
        \right)^2
        p^{q_1}_s(x)
        \,dx\,ds.
    \end{eqnarray*} 
    Given Lemma~\ref{lem:KL_lower}, this yields
    \[
    \frac{3}{2} \int_{\R^d} 
    \frac{(p^{q_2}_s(x)-p^{q_1}_s(x))^2}{p^{q_1}_s(x)+2p^{q_2}_s(x)}\,dx
    \leq 
    L^2_{\text{Lip}}(\Xi)
        \int_0^t 
        \int_{\R^d}
        \Vert x\Vert_2^2
        \left(
        q^2_s(x)-q^1_s(x)
        \right)^2
        p^{q_1}_s(x)
        \,dx\,ds.
    \]
    Since \(p^{q_i}\in\mathcal{X}_{c_T,\gamma_T}\), we furthermore have that 
    \begin{multline*}
    \int_0^t 
        \int_{\R^d}
        \Vert x\Vert_2^2
        \left(
        q^2_s(x)-q^1_s(x)
        \right)^2
        p^{q_1}_s(x)
        \,dx\,ds\\
        \leq 
        \frac{c_T}{\sqrt{2e\gamma_T}}
        \int_0^t 
        \int_{\R^d}
        \Vert x\Vert_2
        \left(
        q^2_s(x)-q^1_s(x)
        \right)^2
        \,dx\,ds
    \end{multline*}
    and, as by the same reason \(p_s^{q_1}(x)+p_s^{q_2}(x)\leq c_T\sqrt{2/(e\gamma_T)}\Vert x\Vert_2^{-1}\), for all \(x\in\R^d\), \(s\in(0,T)\), 
    \[
    \int_{\R^d} 
    \frac{(p^{q_2}_s(x)-p^{q_1}_s(x))^2}{p^{q_1}_s(x)+2p^{q_2}_s(x)}\,dx
    \geq 
    \frac{\sqrt{e\gamma_T}}{2\sqrt{2}c_T}
    \int_{\R^d}
    \Vert x\Vert_2
    (p^{q_2}_s(x)-p^{q_1}_s(x))^2
    \,dx.
    \]
    Hence, the above leads to
    \[
    \int_{\R^d}
    \Vert x\Vert_2
    (p^{q_2}_s(x)-p^{q_1}_s(x))^2
    \,dx
    \leq 
    \frac{(2c_TL_{\text{Lip}}(\Xi))^2}{3e\gamma_T}
    \int_0^t 
        \int_{\R^d}
        \Vert x\Vert_2
        \left(
        q^2_s(x)-q^1_s(x)
        \right)^2
        \,dx\,ds.
    \]
    Since both sides of the inequality above are nonnegative, one can take square root on both sides and observe that
    \begin{multline*}
    e^{-\beta t/2}
    \left(
    \int_{\R^d}
    \Vert x\Vert_2
    (p^{q_2}_s(x)-p^{q_1}_s(x))^2
    \,dx
    \right)^{1/2}\\
    \leq 
    \frac{2c L_{\text{Lip}}(\Xi)}{\sqrt{3e\gamma_T}}
    e^{-\beta t/2}\left(
    \int_0^t 
        \int_{\R^d}
        \Vert x\Vert_2
        \left(
        q^2_s(x)-q^1_s(x)
        \right)^2
        \,dx\,ds
        \right)^{1/2}\\
        \leq \frac{2c L_{\text{Lip}}(\Xi)}{\sqrt{3e\gamma_T}}
        d_{\beta}(q^1,q^2)
        \left(
        \int_0^t 
        e^{-\beta(t-s)}
        \,ds
        \right)^{1/2}
        \leq \frac{2c L_{\text{Lip}}(\Xi)}{\sqrt{3e\gamma_T\beta}}
        d_{\beta}(q^1,q^2),
    \end{multline*}
    and, taking supremum over all \(t\in(0,T)\), we obtain
    \[
    d_{\beta}(p^{q_1}, p^{q_2})
    \leq \frac{2c_T L_{\text{Lip}}(\Xi)}{\sqrt{3e\gamma_T\beta}}
    d_{\beta}(q^1,q^2),
    \]
    i.e.,
    \[
    d_{\beta}(\Phi(q^1), \Phi(q^2))
    \leq \frac{2c_T L_{\text{Lip}}(\Xi)}{\sqrt{3e\gamma_T\beta}}
    d_{\beta}(q^1,q^2).
    \]
    Choosing \(\beta>4c_T^2L^2_{\text{Lip}}(\Xi)/(3e\gamma_T)\), we get that \(\Phi\) is a contractive map, which by the Banach's fixed point theorem implies that there exists a unique fixed point \(p=\Phi(p)\). Since by the It\^{o} formula the density of any weak solution has to satisfy~\eqref{eq:NFPE}, we get that \(p\) is the (unique) density flow of the SDE~\eqref{eq:MV-SDE}. Moreover, since \(T>0\) is arbitrary, the constructed solution is valid for any \(t> 0\).

    Finally, let us show that the constructed weak solution to~\eqref{eq:MV-SDE} is in fact a strong one. Fix some \(T>0\) and let \(X^p_t\) be a weak solution to~\eqref{MV_linear} with \(q=p\). Consider the Zvonkin-type transformation \(Y^p_t:=e^{2\Xi(0)t}X^p_t\). By the It\^{o} formula it holds that
    \[
    e^{2\Xi(0)t}X^p_t
    =X^p_0
    -2\int_0 ^t 
    e^{2\Xi(0)s}X^p_s\left(
    \Xi(p_s(X^p_s))-\Xi(0)
    \right)\,ds
    +\sqrt{2}\int_0^t e^{2\Xi(0)s}\,dW_s,
    \]
    i.e., 
    \begin{equation}\label{Zvonkin_SDE}
        dY_t^p
        =b_Y^p (t,Y_t^p)\,dt 
        +\sqrt{2} e^{2\Xi(0)t}\,dW_t,
        \quad Y_0^p\sim p_0,
        \quad t\in(0,T),
    \end{equation}
    where \(b_Y^p(t,x):=-2x(\Xi(p_t(e^{-2\Xi(0)t}x))-\Xi(0))\).
    Since the diffusion is non-degenerate and the new drift satisfies
    \[
    \Vert b^p_Y(t,x)\Vert_2
    \leq 2L_{\text{Lip}}(\Xi)
    \Vert x\Vert_2 
    p_t(e^{-2\Xi(0)t}x)
    \leq c\Vert x\Vert_2 
    \exp\{
    -\gamma e^{-4\Xi(0)t} \Vert x\Vert_2^2
    \}
    \leq \text{const},
    \]
    it follows by Theorem 1.1 in~\cite{Zhang2005} that~\eqref{Zvonkin_SDE} (and hence~\eqref{MV_linear}) admits a unique strong solution up to an explosion time. Now it remains to observe that, since
    \[
    \Vert Y_t^p\Vert_2
    \leq \Vert Y_0^p\Vert_2 
    +\text{const} \cdot T
    +\sqrt{2}\sup_{0\leq t\leq T}
    \int_0^t e^{2\Xi(0)s}\,dW_s,
    \quad \forall 0< t< T,
    \]
    there is no explosion on \((0,T)\). By the arbitrarity of \(T\), the explosion time is hence infinite. Coming back to the original equation~\eqref{eq:MV-SDE}, we conclude the claim. 

\subsection{Proof of Lemma~\ref{lem:unifbound}}
Define the function \(g\) on \(\R_+\) as
\[
g(r):=\int_1 ^r \frac{1}{s\Xi(s)},
\quad r> 0,
\quad g(0):=-\infty,
\]
and set \(\widetilde{F}(p):=p \Xi(p)\).
Then, since \(g'(r)=(r\Xi(r))^{-1}\) and on the set \(\{x\in\R^d\colon p_t(x)>0\}\) it holds that \(F(p)\nabla (g(p))=\nabla p\), on this set the equation~\eqref{eq:NFPE} can be rewritten as
\[
\partial_t p 
=\nabla \cdot \left(
\widetilde{F}(p)\nabla (g(p)+\Phi(x))
\right).
\]
Defining also for all \(t\geq 0\) the function 
\[
u_t(x):=
\begin{cases}
	g(p_t(x))+\Phi(x),&  p_t(x)>0,\\
	0,& p_t(x)=0,
\end{cases}
\]
we obtain
\[
\widetilde{F}(p)\partial_t u
=\nabla\cdot \left(
\widetilde{F}(p)\nabla u
\right)
\quad \Leftrightarrow \quad
\partial_t u
=\Delta u + \frac{\nabla \widetilde{F}(p)}{\widetilde{F}(p)}\cdot\nabla u.
\]

Now let 
\[
M_0:=\operatorname{esssup}\limits_{x\in\R^d}\left(
g(p_0(x))+\Phi(x)
\right).
\]
To establish a uniform gaussian bound for \(p_t\), we want to show that \(u_t(x)\leq M_0\). First, note that \(M_0<\infty\). Indeed, since \(\Xi\) is continuous at zero, for any \(\eps>0\) there exists a radius \(r_\eps>0\) such that \(\Xi(r)\leq \Xi(0)+\eps\) for all \(0<r<r_\eps\). Hence, for small \(r\), it holds that \(g(r)=\int_1^r (s\Xi(s))^{-1}\,ds\leq C_\eps+(\Xi(0)+\eps)^{-1}\log r\) with some constant \(C_\eps\). Therefore, for \(\Vert x\Vert_2\) large enough, given also that \(p_0(x)\leq C_0\exp\{-b_0\Vert x\Vert_2^2\}\), we get
\[
g(p_0(x))+\Phi(x)
\leq C_\eps +1+\frac{\log C_0}{\Xi(0)+\eps}
+\left(
1-\frac{b_0}{\Xi(0)+\eps}
\right)\Vert x\Vert_2^2\to -\infty,
\]
since by assumption, \(b_0>\Xi(0)\), so one can choose \(\eps>0\) such that \(b_0/(\Xi(0)+\eps>1\). Finally, since also \(p_0\in L^\infty(\R^d)\), the summand \(g(p_0)\) is bounded above, and on bounded sets the sum is finite. 

Now define 
\[
w_t(x):=(u_t(x)-M_0)_+,
\quad t\geq 0.
\]
By definition, \(w_t(x)=0\) for \(t=0\) or whenever \(p_t(x)=0\). Choose a function \(\chi_R\in C_0^\infty(\mathbb R^d)\) such that 
\begin{equation}\label{cutoff}
0\leq \chi_R(x)\leq 1,\, \forall\, x\in\R^d,
\quad \chi_R(x)\equiv 1\, \forall\, x\in B_R,
\quad \chi_R(x)\equiv 0\, \forall\, x\in \R^d\setminus B_{2R},
\end{equation}
and \(\Vert \nabla\chi_R (x)\Vert_2 \lesssim 1/R\) for all \(x\in\R^d\), where \(B_R\subset \R^d\) is a ball of radius \(R>0\). Then
\begin{multline*}
\int_{\R^d} \widetilde{F}(p_t(x))\partial_t u_t(x) w_t(x)\chi^2_R(x)\,dx
=\int_{\R^d} \operatorname{div}\left( 
\widetilde{F}(p_t(x))\nabla u_t(x)
\right)	w_t(x)\chi^2_R(x)\,dx\\
=-\int_{\R^d} \widetilde{F}(p_t(x))\nabla u\cdot\nabla\left(w_t(x)\chi^2_R(x)\right)\,dx.
\end{multline*}
Since on the set \(\{x\in\R^d\colon u_t(x)>M_0\}\) it holds that \(\nabla w_t(x)=\nabla u_t(x)\), while on \(\{x\in\R^d\colon u_t(x)\leq M_0\}\) we have \(w_t(x)=0\) and hence \(\nabla w_t(x)=0\) a.e., we arrive at
\begin{multline*}
	\int_{\R^d} \widetilde{F}(p_t(x))\partial_t u_t(x) w_t(x)\chi^2_R(x)\,dx
	=-\int_{\R^d} \widetilde{F}(p_t(x))\Vert \nabla w_t(x)\Vert_2^2\chi^2_R(x)\,dx\\
	-2\int_{\R^d} \widetilde{F}(p_t(x)) w_t(x)\chi^2_R(x)\nabla w_t(x)\cdot \nabla \chi_R\\
	=-\int_{\R^d} \widetilde{F}(p_t(x)) \Vert \chi_R\nabla w_t(x)
	+w_t(x)\nabla\chi_R(x)\Vert_2^2\,dx
	+\int_{\R^d} \widetilde{F}(p_t(x)) w_t^2(x)
	\Vert \nabla \chi_R(x)\Vert_2^2\,dx\\
	\leq \int_{\R^d} \widetilde{F}(p_t(x)) w_t^2(x)
	\Vert \nabla \chi_R(x)\Vert_2^2\,dx.
\end{multline*}
Since \(\Vert \nabla\chi_R(x)\Vert_2\lesssim R^{-1}\), and \(\nabla \chi_R\) is supported on \(R<\Vert x\Vert_2<2R\), the latter inequality implies that
\begin{equation}\label{auxmax}
\int_{\R^d} \widetilde{F}(p_t(x))\partial_t u_t(x) w_t(x)\chi^2_R(x)\,dx
\leq \frac{1}{R^2}\int_{R<\Vert x\Vert_2<2R}
\widetilde{F}(p_t(x)) w_t^2(x)\,dx.
\end{equation}
Define 
\[
E_{M_0}(z,x):=
\int_{M_0} ^z 
\widetilde{F}\left(
g^{-1}(r-\Phi(x))
\right)
(r-M_0)_+\,dr
\]
Since 
\[
\partial_z E_{M_0}(z,x)
=\widetilde{F}\left(
g^{-1}(z-\Phi(x))
\right)
(z-M_0)_+,
\]
at \(z=u_t(x)\) it holds that
\[
\partial_z E_{M_0}(u_t(x),x)
=\widetilde{F}(p_t(x))w_t(x),
\]
which implies 
\[
\partial_t E_{M_0}(u_t(x),x)
=\widetilde{F}(p_t(x))w_t(x)\partial_t u_t(x).
\]
Hence,~\eqref{auxmax} can be rewritten as
\[
\frac{d}{dt}\int_{\R^d} \widetilde{F}(p_t(x))w_t(x)u_t(x) \chi^2_R(x)\,dx
\leq \frac{1}{R^2}\int_{R<\Vert x\Vert_2<2R}
\widetilde{F}(p_t(x)) w_t^2(x)\,dx,
\]
and integrating from \(0\) to some \(t>0\) fixed, we get
\begin{multline*}
\int_{\R^d} E_{M_0}(u_t(x),x) \chi^2_R(x)\,dx\\
\leq \int_{\R^d} E_{M_0}(u_0(x),x) \chi^2_R(x)\,dx
+\frac{1}{R^2}\int_0^t\int_{R<\Vert x\Vert_2<2R}
\widetilde{F}(p_s(x)) w_s^2(x)\,dx\,ds\\
=\frac{1}{R^2}\int_0^t\int_{R<\Vert x\Vert_2<2R}
\widetilde{F}(p_s(x)) w_s^2(x)\,dx\,ds,
\end{multline*}
the last equality being due to \(u_0(x)\leq M_0\). Now, since \(g\) grows at most logarithmically in its argument, and using~\eqref{gaussbound_pt}, one has
\[
w_s(x)
=(g(p_s(x))+\Phi(x)-M_0)_+
\leq c_T (1+\Vert x\Vert_2^2),
\]
for any \(T>0\), taking \(T>t\) we get
\begin{multline*}
\frac{1}{R^2}\int_0^t\int_{R<\Vert x\Vert_2<2R}
\widetilde{F}(p_s(x)) w_s^2(x)\,dx\,ds
\leq \frac{c_T}{R^2} \int_0^t\int_{R<\Vert x\Vert_2<2R}
p_s(x)(1+\Vert x\Vert_2^2)\,dx\,ds\\
\leq c_T \int_0^t\int_{\Vert x\Vert_2>R}
p_s(x)(1+\Vert x\Vert_2^2)\,dx\,ds
\end{multline*}
Since \(\int_{\Vert x\Vert_2>R} \Vert x\Vert_2^2p_s(x)\,dx\leq R^{-2}\int_{\R^d} \Vert x\Vert_2^4p_s(x)\,dx\), and by Lemma~\ref{momentcontrol} it holds that \(\sup_{s\geq 0} \int_{\R^d} \Vert x\Vert_2^4p_s(x)\,dx<\infty\), we get 
\[
\int_{\R^d} E_{M_0}(u_t(x),x) \chi^2_R(x)\,dx \leq \eps_R
\]
with 
\[
\eps_R := c_T \int_0^t\int_{\Vert x\Vert_2>R}
p_s(x)(1+\Vert x\Vert_2^2)\,dx\,ds\to 0,
\quad R\to\infty.
\]
Since \(E_{M_0}(u_t(x),x)\chi^2_R(x)\to E_{M_0}(u_t(x),x)\) as \(R\to\infty\) for a.e.\ \(x\in\R^d\), and the functions are nonnegative, by the Fatou's lemma, 
\begin{multline*}
\int_{\R^d} E_{M_0}(u_t(x),x)\,dx
=\int_{\R^d} \liminf_{R\to\infty} E_{M_0}(u_t(x),x)\chi^2_R(x)\,dx\\
\leq \liminf_{R\to\infty} \int_{\R^d} E_{M_0}(u_t(x),x)\chi^2_R(x)\,dx
\leq \liminf_{R\to\infty} \eps_R =0.
\end{multline*}
We hence conclude that \(\int_{\R^d} E_{M_0}(u_t(x),x)\,dx=0\) and, by non-negativity, \(E_{M_0}(u_t(x),x)=0\) for a.e.\ \(x\). Since \(\widetilde{F}(g^{-1}(r-\Phi(x)))>0\) for positive arguments, it holds that \(E_{M_0}(u_t(x),x)=0\) if and only if \(u_t(x)\leq M_0\) a.e.\ \(x\) on \(\{x\in\R^d\colon p_t(x)>0\}\), and by continuity pointwise on the same set. By the definition of \(u_t\), and since by positivity of the right-hand side the inequality is trivially satisfied on \(\{x\in\R^d\colon p_t(x)=0\}\), we hence conclude 
\begin{equation}\label{pM0bound}
p_t(x)\leq g^{-1}\left(
M_0-\Phi(x)
\right)
\end{equation}
for all \(t>0\) and \(x\in\R^d\).

Now observe that, since \(\Xi\) is in \(C^1\) near zero, it follows that there exists some \(\delta>0\) such that \(\Xi(r)=\Xi(0)+O(r)\) for all \(r<\delta\). Consequently, \((s\Xi(s))^{-1}-(s\Xi(0))^{-1}=O(1)\) as \(s\to 0\), and \(\lim_{r\to 0} (g(r)-\Xi(0)^{-1}\log r)<\infty\). Thus, there exists some constant \(C>0\) such that for all \(r<\delta\) it holds that 
\[ 
g(\delta)
\geq 
g(r)\geq \frac{1}{\Xi(0)}\log r-C,
\]
where the first inequality follows from the fact that \(g\) is non-decreasing.
Taking \(g(r):=z\) we get, for all \(z<z_0\) with \(z_0:=g(\delta)\),
\[
z\geq \frac{1}{\Xi(0)}\log g^{-1}(z)-C
\quad \Leftrightarrow\quad 
g^{-1}(z)
\leq e^C e^{\Xi(0)z}.
\]
Applying this to~\eqref{pM0bound}, we then get
\[
p_t(x)\leq e^C \exp\{
\Xi(0)
(M_0-\Phi(x))
\}
\lesssim C_1(b_0, c_0, \Xi(0))\exp\{-\Vert x\Vert_2^2\},
\]
whenever \(M_0-\Phi(x)<z_0\), i.e., for all \(\Vert x\Vert_2>M_0-1-z_0\). For all \(\Vert x\Vert_2\leq M_0-1-z_0\), one observes that
\begin{multline*}
p_t(x)
\leq g^{-1}(M_0-\Phi(x))e^{\Xi(0)(M_0-1-z_0)}\exp\{-\Xi(0)\Vert x\Vert_2^2\}\\
\lesssim C_2(b_0, c_0, \Xi(0))\exp\{-\Xi(0)\Vert x\Vert_2^2\},
\end{multline*}
which together with the previous bound yields the result for all \(x\in\R^d\). 

\subsection{Proof of Theorem~\ref{C2+grad}}
We start with establishing the local regularity of the weak solution constructed in Theorem~\ref{thm:wellposednessMV}. Since by Theorem~\ref{thm:wellposednessMV} it is known that \(p\) belongs to \(C^{\beta/2,\beta}_{\mathrm{loc}}((0,T)\times\R^d)\) for some \(\beta\in(0,1)\), the core idea is to employ the parabolic bootstrap.
Denote 
\[
F(x,t):=2x\,p_t(x)\Xi(p_t(x)).
\]
Since \(p\in C^{\beta/2,\beta}_{\mathrm{loc}}
\bigl((0,T)\times\mathbb R^d\bigr)\), 
\(\Xi\in C^1(\R_+)\), and \(x\) is bounded on compact subsets of \(\mathbb R^d\), we have
\[
F\in C^{\beta/2,\beta}_{\mathrm{loc}}
\bigl((0,T)\times\mathbb R^d\bigr).
\]
Hence, rewriting~\eqref{eq:NFPE} as
\[
\partial_t p_t-\Delta p=\operatorname{div}F,
\]
and identifying \(F\) as \(G\) in Lemma~\ref{lem:heat_aux} (the second summand is then \(H=0\)), we get that \(\nabla_x p\in C^{\beta/2,\beta}_{\mathrm{loc}}(Q)\) for any bounded cylinder \(Q\in (0,T)\in\mathbb{R}^d\), from which \(\nabla_x p\in C^{\beta/2,\beta}_{\mathrm{loc}}((0,T)\in\mathbb{R}^d)\). Expanding now the divergence 
\[
\operatorname{div}(2xp\Xi(p))
=
2d\,p\Xi(p)
+
2x\cdot(\Xi(p)+p\Xi'(p))\nabla_x p
=:f,
\]
we notice that \(\partial_t p_t-\Delta p=f\) with \(f\in C^{\beta/2,\beta}_{\mathrm{loc}}((0,T)\times \R^d)\). Choose again some nested bounded cylinders \(Q_r\Subset Q_{2r}\Subset (0,T)\times\R^d\) and a function \(\chi\in C_0^\infty(Q_{2r})\) such that \(\chi\equiv 1\) on \(Q_r\). Since \(p,f\in C^{\beta/2,\beta}(Q_{2r})\), the function \(R:=\chi(f+p)\) belongs to \(C^{\beta/2,\beta}_0(Q_{2r})\). By Theorem 8.7.3 in~\cite{krylov1996lectures}, there exists a unique function \(u\in C^{1+\beta/2,2+\beta}(\R^{d+1})\) satisfying 
\[
\Delta u-\partial_t u_t-u=-R
\]
in \(\R^{d+1}\). Rewriting now~\eqref{eq:NFPE} as
\[
\Delta p-p_t-p=-(f+p)
\]
and noticing that on \(Q_r\) this is equivalent to
\[
\Delta p-p_t-p=-R,
\]
we again observe that \(h:=p-u\) satisfies \(\Delta h-\partial_t h_t-h=0\) on \(Q_r\), and, by hypoellipticity of \(\Delta-\partial_t-1\), it follows that \(h\in C^\infty_{\mathrm{loc}}(Q_r)\). Since \(p=u+h\) and \(u\in C^{1+\beta/2,2+\beta}(Q_r)\), we conclude that \(p\in C^{1+\beta/2,2+\beta}_{\mathrm{loc}}(Q_r)\) and, since the cylinder was arbitrary, \(p\) belongs to \(C^{1+\beta/2,2+\beta}_{\mathrm{loc}}((0,T)\times\R^d)\).

Now let us turn towards the gradient bound. Fix some \(t_0\in(0,T)\) and \(x_0\in\R^d\) and define 
\[
\theta:=\theta(t_0)=\min(1,\sqrt{t_0/2})
\quad \text{and} \quad 
r:=r(x_0)=\frac{\theta}{1+\Vert x_0\Vert_2}.
\]
Since under this choice \(r^2<t_0/2\) and \(t_0-r^2\geq t_0/2>0\), the cylinder \(Q^-_r(t_0,\,x_0):=(t_0-r^2,\,t_0]\times B_r(x_0)\) with \(B_r(x_0)\) being a ball of radius \(r\) in \(\R^d\) centered in \(x_0\) is contained in \((0,T)\times \R^d\). 
Defining
\begin{equation*}
 s:=\frac{t-t_0}{r^2}+1+\epsilon_0,
 \qquad y:=\frac{x-x_0}{r},
\end{equation*}
with some \(\epsilon_0\in(0,0.5)\) one can observe that \((t,x)\in Q_r^-(t_0,\,x_0)\) if and only if \((s,y)\in (\epsilon_0,1+\epsilon_0]\times B_1(0)\). With this idea in mind, define also the rescaled version of \(p\) by
\begin{equation*}
 v(s,y):=\frac{p_{t_0+r^2(s-1-\epsilon_0)}(x_0+ry)}{S},
 \quad (s,y)\in (\epsilon_0,1+\epsilon_0]\times B_1(0),
\end{equation*}
where 
\[
 S:=\sup_{(s,y)\in (\epsilon_0,1+\epsilon_0]\times B_1(0)} p_{t_0+r^2(s-1-\epsilon_0)}(x_0+ry).
\]
Rewriting~\eqref{eq:NFPE} as
\[
\partial_t p_t-\operatorname{div}(\nabla p+2xp\Xi(p))=0, \quad p(0,x)=p_0(x),
\]
with \((t,\,x):=(t_0+r^2 (s-1-\epsilon_0),\, x_0+ry)\in Q_r^-(t_0,\,x_0)\),
and noticing that 
\[
\partial_s v_s=\frac{r^2}{S}\partial_t p_{t_0+r^2(s-1-\epsilon_0)},
 \quad
 \nabla v_s(y)=\frac{r}{S}(\nabla p_{t_0+r^2(s-1-\epsilon_0)})(x_0+ry),
 \quad
 \operatorname{div}_x=\frac{1}{r}\operatorname{div}_y,
\]
we conclude that \(v\) satisfies
\[
\partial_s v
 -\operatorname{div}\bigl(\nabla v+2r(x_0+ry)v\Xi(Sv)\bigr)=0,\,\,
  v(0,y)=v_0(y),\,\,
\forall (s,y)\in (\epsilon_0,1+\epsilon_0]\times B_1(0).
\]
Since this is now a quasi-linear equation with bounded coefficients, we get by Theorem 3.1 of Chapter 5 in~\cite{LSU} that 
on a smaller cylinder \((2\epsilon_0,1+\epsilon_0/2]\times B_{1/2}(0)\) it holds that
\begin{equation*}
 \max_{(2\epsilon_0,1+\epsilon_0/2]\times B_{1/2}(0)}\Vert \nabla v\Vert_2
 \leq C(d,c,K,L_{\text{Lip}}(\Xi))
\end{equation*}
with some constant growing at most polynomially in each of the variables. In particular,
\begin{equation*}
 \Vert \nabla v(1,0)\Vert_2\leq C(d,c,K,L_{\text{Lip}}(\Xi)).
\end{equation*}
Recalling that
\begin{equation*}
 v(1,0)=\frac{p_{t_0}(x_0)}{S},
 \qquad
 \nabla v(1,0)=\frac{r}{S}(\nabla p_{t_0})(x_0),
\end{equation*}
we hence get
\begin{equation*}
 \Vert \nabla p_{t_0}(x_0)\Vert_2\leq \frac{C(d,c,K,L_{\text{Lip}}(\Xi))}{r}S,
\end{equation*}
i.e.,
\begin{eqnarray*}
 \Vert\nabla p_{t_0}(x_0)\Vert_2
 &\leq& \frac{C(d,c,K,L_{\text{Lip}}(\Xi))}{\theta}(1+\Vert x_0\Vert_2)\sup_{(s,y)\in (\epsilon_0,1+\epsilon_0]\times B_{1}(0)}p_{t_0+r^2(s-1)}(x_0+ry)\\
 &\leq& \frac{C(d,c,K,L_{\text{Lip}}(\Xi))}{\theta}(1+\Vert x_0\Vert_2)\sup_{y\in B_1(0)}\exp\{-\Xi(0)\Vert x_0+ry\Vert_2^2\}.
\end{eqnarray*}
Since
\begin{equation*}
 \Vert x_0+ry\Vert_2^2
 \geq (\Vert x_0\Vert_2-r)^2
 =\left(\Vert x_0\Vert_2-\frac{\theta}{1+\Vert x_0\Vert_2}\right)^2,
\end{equation*}
by $\theta\leq 1$, we get, whenever $\Vert x_0\Vert_2\geq 1$, that $r\leq \Vert x_0\Vert_2/2$, so that
\begin{equation*}
 \Vert x_0+ry\Vert_2^2
 \geq \left(\Vert x_0\Vert_2-\frac{\theta}{1+\Vert x_0\Vert_2}\right)^2
 \geq \frac{\Vert x_0\Vert_2^2}{4},
\end{equation*}
from which
\begin{equation*}
 \sup_{y\in B_1(0)}c\exp\{-\Xi(0)\Vert x_0+ry\Vert_2^2\}
 \leq c\exp\left\{-\frac{\Xi(0)}{4}\Vert x_0\Vert_2^2\right\}.
\end{equation*}
Whenever $\Vert x_0\Vert_2<1$, note that
\begin{equation*}
 \exp\left\{-\frac{\Xi(0)}{4}\Vert x_0\Vert_2^2\right\}
 \ge \exp\{-\Xi(0)/4\},
\end{equation*}
which allows the estimate
\begin{equation*}
 \sup_{y\in B_1(0)} c\exp\{-\Xi(0)\Vert x_0+ry\Vert_2^2\}
 \leq c
 \leq ce^{\Xi(0)/4}\exp\left\{-\frac{\Xi(0)}{4}\Vert x_0\Vert_2^2\right\}.
\end{equation*}
Hence, we arrive at
\begin{multline*}
 \Vert \nabla p_{t_0}(x_0)\Vert_2
 \leq \frac{c_1(d,K,b_0, c_0, \Xi(0),L_{\text{Lip}}(\Xi))}{\theta}(1+\Vert x_0\Vert_2)
 \exp\left\{-\frac{\Xi(0)}{4}\Vert x_0\Vert_2^2\right\}\\
 = \frac{c_1(d,K,b_0, c_0, \Xi(0),L_{\text{Lip}}(\Xi))}{\min(1,\sqrt{t/2})}(1+\Vert x_0\Vert_2)
 \exp\left\{-\frac{\Xi(0)}{4}\Vert x_0\Vert_2^2\right\}.
\end{multline*}
Since \(t_0\in(0,T)\) and \(x_0\in\R^d\) were arbitrary, we conclude the claim.

\subsection{Proof of Lemma~\ref{lem:stationary}}
By definition, any stationary density \(\pi\) must satisfy the stationary
version of (\ref{eq:NFPE}), i.e.,
\begin{eqnarray}
\label{eq:stat-eq2}
\Delta \pi+\mathrm{div}(\nabla\Phi\, \Xi(\pi)\pi)=0.
\end{eqnarray}
Let \(\pi\) be any stationary probability density solving~\eqref{eq:stat-eq2} weakly. Equivalently, \(\pi\) is a probability solution to
\begin{equation}\label{eq:stat-eq3}
\mathrm{div}(\nabla\pi+\nabla\Phi \Xi(\pi)\pi)=0,
\end{equation}
i.e., \(\pi\) satisfies
\begin{equation}\label{L_pi}
\int_{\R^d} \pi(x) L_\pi \varphi(x)\,dx=0,
\quad \forall\,\varphi\in C^\infty_0(\R^d),
\end{equation}
with \(L_\pi:=\Delta-2x\Xi(\pi)\cdot\nabla\). Fix some \(a\in(0,\kappa)\) and define \(V(x):=e^{a\Vert x\Vert_2^2}\). Since \(\Xi(\pi(x))\geq\kappa\) for all \(x\in\R^d\), it holds that
\begin{multline}\label{L_pi_bound}
L_\pi V(x)
=(2ad+4a^2\Vert x\Vert_2^2)V(x)
-4a\Vert x\Vert_2^2\Xi(\pi(x))V(x)\\
\leq \left(
2ad-4a(\kappa-a)\Vert x\Vert_2^2
\right)
V(x)
\end{multline}
for any \(x\in\R^d\), where \((\kappa-a)>0\). Hence, the coefficient before \(V\) is negative for \(\Vert x\Vert_2\) large enough, and there exist constants \(k, k_R, R>0\) such that
\[
L_\pi V(x)\leq k_R \mathbbm{1}_{B_R(0)}(x)-k\Vert x\Vert_2^2 V(x)\mathbbm{1}_{\R^d\setminus B_R(0)}(x),
\]
where \(B_R(0)\subset\R^d\) is a ball of radius \(R\) centered in zero. Now let \(\{f_n\}_{n\geq 1}\) be a sequence of increasing and concave functions \(f_n\in C^\infty([0,\infty))\) with
\[
0\leq f_n'(r)\leq 1,
\quad f_n''(r)\leq 0,
\quad \forall\, r\geq 0,
\]  
and such that \(f_n(r)\uparrow 1\) as \(r\to\infty\), \(f_n(r)=r\) for \(r\leq n\) and \(f_n=2n\) for \(r>2n\) (for example, such functions can be constructed from a nonincreasing cutoff). Since \((f_n(V(x))-f_n(2n))\in C^\infty_0(\R^d)\), one can use it as a function \(\varphi\) in the weak formulation~\eqref{L_pi}, leading to
\begin{multline*}
0=\int_{\R^d} \pi(x) L_\pi\left(f_n(V(x))-f_n(2n)\right)\,dx
=\int_{\R^d} \pi(x) L_\pi f_n(V(x))\,dx\\
=\int_{\R^d} \pi(x)\left(
f'_n(V(x))\Delta V(x)+f''_n(V(x))\Vert \nabla V(x)\Vert_2^2 \right.\\\left.
\hspace{8cm}-2x\Xi(\pi(x))\cdot f'_n(V(x))\nabla V(x)
\right)\,dx\\
=\int_{\R^d} \pi(x)\left( 
f'_n(V(x))L_\pi V(x) + f''_n(V(x))\Vert \nabla V(x)\Vert_2^2
\right)\,dx\\
\leq \int_{\R^d} \pi(x)f'_n(V(x))L_\pi V(x)\,dx
\end{multline*}
by the non-positivity of \(f''_n\). Using~\eqref{L_pi_bound} we then get
\[
0\leq k_R \int_{B_R(0)} \pi(x)\,dx 
- k\int_{\R^d\setminus B_R(0)} \Vert x\Vert_2^2 V(x) \pi(x) f'_n(V(x))\,dx,
\]
and, since \(\pi\) being a probability density satisfies \(\int_{B_R(0)}\pi(x)\,dx\leq 1\), it follows that 
\[
\int_{\R^d\setminus B_R(0)} \Vert x\Vert_2^2 V(x) \pi(x) f'_n(V(x))\,dx \leq \frac{k_R}{k}.
\]
Letting \(n\to\infty\), by monotone convergence, we get that \(\int_{\R^d\setminus B_R(0)}\Vert x\Vert_2^2 e^{a\Vert x\Vert_2^2}\pi(x)\,dx<\infty\), implying that, for every \(a\in(0,\kappa)\)
 \[
\int_{\mathbb R^d}e^{a|x|^2}\pi(x)\,dx<\infty,
\quad\text{in particular,}\quad
\int_{\mathbb R^d}(1+|x|)^m e^{a|x|^2}\pi(x)\,dx<\infty,
\,\,\forall\,m\geq 0.
\]
Now let us note that the measure \(\Pi_t(dx):=\pi(x)\,dx\) corresponding to \(\pi\) can be viewed as a stationary solution to the equation \(\partial_t\Pi_t=L^*_\pi \Pi_t\). Define \(\Lambda(x):=e^{\alpha\Vert x\Vert_2^2}\) with some \(0<\alpha<a<\kappa\) and observe that \(\widetilde{\Pi}_t(dx):=\Lambda(x)\Pi_t(x)/\int_{\R^d}\Lambda(x)\Pi_t(x)\) is itself a probability measure solving \(\partial_t \widetilde{\Pi}_t=\widetilde{L}_\pi^*\widetilde{\Pi}_t\) with
\begin{multline*}
\widetilde{L}_\pi \varphi
=
\left(\Delta\varphi + (-2x\Xi(\pi(x))+4\alpha x)\cdot\nabla\varphi
+(2\alpha d+4\alpha^2\Vert x\Vert_2^2-4\alpha\Vert x\Vert_2^2\Xi(\pi(x))\varphi\right)\\
\times \left(\int_{\R^d}\Lambda(x)\mu_t(x)\right)
=0,
\end{multline*}
where the coefficients are locally \(L^p\) in space.
By Theorem 7.3.3 in~\cite{bogachev2022fokker}, it follows that that \(\Lambda(x)\pi(x)\leq \mathrm{const}\) for all \(x\in\R^d\), so \(\pi(x)\lesssim e^{-\alpha\Vert x\Vert_2^2}\) for all \(x\in\R^d\) pointwise. Hence, \(\pi\) satisfies condition 3 of Assumption~\ref{ass:main}, which makes it an admissible initial density \(p_0\) in Theorem~\ref{thm:wellposednessMV}. Applying this theorem to~\eqref{eq:NFPE} starting from \(\pi\), we then conclude that the resulting solution is unique, and by Theorem~\ref{C2+grad} it holds that \(\pi\in C^2_{\mathrm{loc}}(\R^d)\). Moreover, by the strong maximum principle (see, e.g., Theorem 2.7 in~\cite{lieberman1996}, which can be applied to \(u(t,x):=-e^{-\lambda t}\pi(x)\) with \(\lambda\geq 2dK\)), it follows that \(\pi(x)>0\) everywhere on \(\R^d\).

While the previous argument restricts the class of possible stationary density to \(C^2_{\mathrm{loc}}\) with Gaussian tails, it does not forbid two different stationary solutions belonging to this class. We hence aim at proving that the stationary density is unique. To this end, define \(g\) as in~\eqref{g}. Note that the integrand is well-defined by the strong positivity of \(\pi\). By the assumption on \(\Xi\), it can be seen that \(g\in C^1((0,\infty))\) and \(g'(r)=(r\Xi(r))^{-1}\) for all \(r>0\), implying that \(g\) is monotonically increasing, and hence \(g^{-1}\) is also well-defined on \((0,\infty)\). One can now observe that 
\[
\nabla g(\pi(x))
=\frac{1}{\pi(x)\Xi(\pi(x))}\nabla \pi(x),
\]
from which, denoting \(F(x):=g(\pi(x))+\Phi(x)\), it follows that
\[
\nabla \pi+2x\pi\Xi(\pi)
=\pi\Xi(\pi)\nabla F,
\]
i.e.,~\eqref{eq:stat-eq2} can be rewritten as
\begin{equation}\label{eq:stat-eq4}
\mathrm{div}(\pi\Xi(\pi)\nabla F)=0.
\end{equation}
Observe that, by the bounds on \(\Xi\),
\[
|g(u)|\leq \mathrm{const}(1+|\log u|), 
\quad u>0,
\]
while by the pointwise Gaussian bound on \(\pi\)
\[
\int_{\R^d} \Vert x\Vert_2^4 \pi(x)\,dx<\infty
\quad \text{and} \quad
\int_{\R^d} \pi(x)|\log\pi(x)|^2\,dx<\infty,
\]
implying that
\(
\int_{\R^d} \pi(x)F(x)^2\,dx<\infty. 
\)
Choose a function \(\chi_R\in C_0^\infty(\mathbb R^d)\) as in~\eqref{cutoff}
and for some mollifier \(\rho_\epsilon\) with \(\epsilon>0\) define \(\varphi_{R,\epsilon}:=\rho_\epsilon\star \varphi_R\) with \(\varphi_R:=\chi_R^2 F\). Since \(\varphi_{R,\epsilon}\in C^\infty_0(\R^d)\), it is an admissible function in the weak formulation of~\eqref{eq:stat-eq4}, and since \(\varphi_{R,\epsilon}\to \varphi_R\) in \(C_0^1(\R^d)\) as \(\epsilon\to 0\), one can pass to the limit as \(\epsilon\to 0\), arriving at
\begin{multline*}
    0
    =-\int_{\R^d} 
    \pi(x)\Xi(\pi(x))\nabla F(x)\cdot \nabla (\chi_R^2(x)F(x))\,dx\\
    \Leftrightarrow 
    \int_{\R^d}
    \chi_R(x)^2 \pi(x)\Xi(\pi(x))\Vert \nabla F(x)\Vert_2^2\,dx\\
    =-2\int_{\R^d} \chi_R(x)\pi(x)\Xi(\pi(x))F(x)\nabla F(x)\cdot \nabla \chi_R(x)\,dx
\end{multline*}
Since by the Young's inequality
\begin{multline*}
    \int_{\R^d} |2\chi_R(x)\pi(x)\Xi(\pi(x))F(x)\nabla F(x)\cdot \nabla \chi_R(x)|\,dx\\
    \leq \frac{1}{2}\int_{\R^d} \chi_R^2(x)\pi(x)\Xi(\pi(x))\Vert\nabla F(x)\Vert_2^2\,dx
    +2\int_{\R^d} \pi(x)\Xi(\pi(x))F(x)^2\Vert \nabla\chi_R(x)\Vert_2^2\,dx,
\end{multline*}
it follows from the above that 
\begin{eqnarray*}
\int_{\R^d} \chi_R^2(x)\pi(x)\Xi(\pi(x))\Vert\nabla F(x)\Vert_2^2\,dx
&\leq& 4 \int_{\R^d} \pi(x)\Xi(\pi(x))F(x)^2\Vert \nabla\chi_R(x)\Vert_2^2\,dx\\
&\leq& \frac{4K}{R^2}\int_{B_{2R}\setminus B_R} \pi(x)F(x)^2\,dx,
\end{eqnarray*}
with the last inequality following from the upper bounds for \(\Xi\) and \(\Vert \nabla\chi_R\Vert_2\). Since \(\pi F^2\in L^1(\R^d)\), the right-hand side tends to zero as \(R\to\infty\). Fixing any ball \(B_m\) with \(R>m\), we then get, since \(\chi_R\equiv 1\) on \(B_m\), that
\[
\int_{B_m} \pi(x)\Xi(\pi(x))\Vert \nabla F(x)\Vert_2^2\,dx
\leq \frac{4K}{R^2}\int_{B_{2R}\setminus B_R} \pi(x)F(x)^2\,dx,
\]
where the right-hand side tends to zero as \(R\to\infty\), yielding \(\int_{B_m} \pi(x)\Xi(\pi(x))\Vert \nabla F(x)\Vert_2^2\,dx=0\). Since \(m>0\) was arbitrary, it follows that \(\pi(x)\Xi(\pi(x))\Vert \nabla F(x)\Vert_2^2=0\) on \(\R^d\), and since \(\pi(x)\Xi(\pi(x))>0\), we conclude that \(\nabla F(x)=0\) on \(\R^d\). Therefore any stationary density satisfies 
\[
F(x)=\mu
\quad \Leftrightarrow\quad
\pi(x)=g^{-1}(\mu-\Phi(x)),
\]
where \(\mu\in\R\) is a normalisation constant ensuring that \(\pi\) integrates to unity. Again by the bounds on \(\Xi\), it holds that
\[
\begin{cases}
    e^{\kappa u} \leq g^{-1}(u) \leq e^{Ku},& u\geq 0,\\
    e^{K u} \leq g^{-1}(u) \leq e^{\kappa u},& u\leq 0,
\end{cases}
\]
from which it follows that \(\pi(x)=g^{-1}(\mu-\Phi(x))\leq c_1 e^{-c_2\Vert x\Vert_2^2}\) for some constants \(c_1,c_2>0\). Defining now \(M(\mu):=\int_{\R^d} g^{-1}(\mu-\Phi(x))\,dx\in(0,\infty)\) and observing that \(M(\mu)\to 0\) as \(\mu\to -\infty\) and \(M(\mu)\to\infty\) as \(\mu\to\infty\), we conclude that there exists a unique constant \(\mu\in\R\) such that \(M(\mu)=1\), i.e., such that \(\pi\) is indeed a probability density. 

\subsection{Proof of Lemma~\ref{LSI}}
    Denote \(s:=\Vert x\Vert_2^2\) and define \(G(s):=-\log g^{-1}(\mu-1-s)\) with \(g\), \(\mu\) given in Lemma~\ref{lem:stationary}. Since \(g(g^{-1}(\mu-1-s))=\mu-1-s\), differentiating both sides with respect to \(s\) one obtains that
    \[
    (g^{-1}(\mu-1-s))'
    =-g^{-1}(\mu-1-s)\Xi(g^{-1}(\mu-1-s)),
    \]
    from which \(G'(s)=\Xi(g^{-1}(\mu-1-s))\). Since by Assumption~\ref{ass:main} the function \(\Xi\) is bounded between \(\kappa\) and \(K\), it follows that
    \[
    \kappa\leq G'(s)\leq K
    \quad \Leftrightarrow \quad
    G(0)+\kappa s
    \leq G(s)
    \leq G(0)+Ks,
    \quad s\geq 0.
    \]
    Changing back from \(s\) to \(x\), we conclude~\eqref{pi_bound}. 

    Now define \(\widetilde{G}(s):=\Xi(0)s\). Since \(g^{-1}(1-\mu-s)\leq \pi(0)e^{-\kappa s}\), it holds that
    \begin{eqnarray*}
        \operatorname{osc}\limits_{s\geq 0}
        (G(s)-\widetilde{G}(s))
        &=&\sup\limits_{s\geq 0}
        (G(s)-\widetilde{G}(s))
        -\inf\limits_{s\geq 0}
        (G(s)-\widetilde{G}(s))\\
        &=&\sup_{s,t\geq 0}\left|
        (G(s)-\widetilde{G}(s))
        -(G(t)-\widetilde{G}(t))
        \right|\\
        &\leq& \int_0^\infty |G'(r)-\widetilde{G}'(r)|\,dr
        = \int_0^\infty |\Xi(g^{-1}(\mu-1-r))-\Xi(0)|\,dr\\
        &\leq& L_{\mathrm{Lip}}(\Xi)\int_0^\infty |g^{-1}(\mu-1-r)|\,dr
        \leq L_{\mathrm{Lip}}\pi(0)\int_0^\infty e^{-\kappa r}\,dr\\
        &=&\frac{L_{\mathrm{Lip}}\pi(0)}{\kappa}.
    \end{eqnarray*}
    Since \(\widetilde{G}(s)\) corresponds to the Gaussian potential with exponent \(\widetilde{V}(x)=-\Xi(0)\Vert x\Vert_2^2\), for which \(\nabla^2 \widetilde{V}(x)=2\Xi(0)I_d\) and hence the Log-Sobolev inequality is satisfied with constant \(C_{\mathrm{LSI}}^{\widetilde{V}}:=(2\Xi(0))^{-1}\), it follows by the Holley-Strook perturbation lemma (see, e.g., Theorem 1.1 in~\cite{cattiaux2022functional}) that \(\pi\) satisfies this inequality with constant \(C_\mathrm{LSI}\leq e^{\operatorname{osc}(G-\widetilde{G})}C_{\mathrm{LSI}}^{\widetilde{V}}\), i.e., \(C_\mathrm{LSI}=
    (2\Xi(0))^{-1}
   \exp\left\{
   L_{\mathrm{Lip}}(\Xi)\pi(0)/\kappa
   \right\}\). By the relation between the Log-Sobolev and Poincar\'e inequalities, we get the claim.

\subsection{Proof of Theorem~\ref{thm:grad_pt-bound}}
The key idea is to employ the Duhamel's formula~\eqref{p_duh} with \(a_0=\Xi(0)\) and differentiate it in \(x\). 
Decompose~\eqref{eq:NFPE} as in~\eqref{eq:split}. By Assumption~\eqref{ass:main} alongside~\eqref{gaussbound_pt_unif} and~\eqref{gaussbound_grad} it holds that
\begin{eqnarray}\label{nablaH}
|\nabla_y \cdot H(s,y)|
&=&| 2d(\Xi(p_s(y))-\Xi(0))p_s(y) +2y\cdot (\nabla_y(\Xi(p_s(y))-\Xi(0)))p_s(y)\nonumber \\
&&
+2y(\Xi(p_s(y))-\Xi(0))\cdot\nabla_y p_s(y)| \nonumber \\
&\leq& 2dL_{\mathrm{Lip}}(\Xi)p_s(y)\left(
p_s(y)
+2\Vert y\Vert_2 \Vert \nabla_y p_s(y)\Vert_2
\right) \nonumber\\
&\leq&
2dc L_{\mathrm{Lip}}(\Xi)\left(
c+\frac{2c_1 \Vert y\Vert_2(1+\Vert y\Vert_2)}{\min(1,\sqrt{s/2})}
\right)\exp\left\{
-(\Xi(0)+\Xi(0)/4)\Vert y\Vert_2^2 
\right\} \nonumber \\
&\lesssim& 
\frac{c_2(1+\Vert y\Vert_2)^{2}}{\min(1,\sqrt{s/2})}
\exp\left\{
-(5\Xi(0)/4)\Vert y\Vert_2^2 
\right\},
\end{eqnarray}
where \(c_1:=c_1(d,K,b_0,c_0,\Xi(0),L_{\mathrm{Lip}}(\Xi))\) is a constant from~\eqref{gaussbound_grad}, and \(c_2\) is the enlarged constant, \(c_2:=c_2(d,\kappa,K,b_0,c_0,\Xi(0),L_{\mathrm{Lip}}(\Xi))\).  
We now justify differentiating the Duhamel term in \(x\). To this end, we are bounding the time and space variables. More precisely, let \(R>0\) and choose a function \(\chi_R\in C_0^\infty(\mathbb R^d)\) as in~\eqref{cutoff}. For \(\varepsilon>0\), define
\[
I_{\varepsilon,R}(t,x)
:=
\int_0^{t-\varepsilon}
\int_{\mathbb R^d}
G_{t-s}(x,y)\chi_R(y)\nabla\cdot H(s,y)\,dy\,ds.
\]
For fixed \(R\) and \(\varepsilon\), the integration is over
\((0,t-\varepsilon)\times B_{2R}\), where the kernel is smooth in \(x\) and \(\nabla_x G_{t-s}(x,y)\) is bounded locally in \(x\). Since \(\nabla\cdot H\in L^1_{\mathrm{loc}}((0,T)\times\R^d)\), one can differentiate under the integral sign, which yields
\[
\nabla_x I_{\varepsilon,R}(t,x)
=
\int_0^{t-\varepsilon}
\int_{\mathbb R^d}
\nabla_xG_{t-s}(x,y)\chi_R(y)\nabla\cdot H(s,y)\,dy\,ds.
\tag{3}
\]
We first let \(R\to\infty\). By~\eqref{nablaH},
\begin{multline}\label{nablaGnablaH}
\Vert \nabla_x G_{t-s}(x,y)\chi_R(y)\nabla_y\cdot H_s(y)\Vert_2
\lesssim 
\Vert \nabla G_{t-s}(x,y)\Vert_2
\frac{c_2(1+\Vert y\Vert_2)^{2}}{\min(1,\sqrt{s/2})}\\
\times
\exp\left\{
-5\Xi(0)/4)\Vert y\Vert_2^2 
\right\}.
\end{multline}
Since \(s\in(0,t-\varepsilon)\), the gradient 
\[
\nabla_x G_{t-s}(x,y)
=-\frac{2\Xi(0)}{1-\rho_{t-s}^2} 
(x-\rho_{t-s}y) G_{t-s}(x,y),
\]
where \(\rho_{t-s}:=e^{-2\Xi(0)(t-s)}\),
possesses no singularity, and the right-hand side is integrable in
\((s,y)\in(0,t-\varepsilon)\times\mathbb R^d\). Hence dominated
convergence gives
\[
\nabla_x I_{\varepsilon,R}(t,x)
\longrightarrow
\nabla_x I_\varepsilon(t,x)
:=
\int_0^{t-\varepsilon}
\nabla_x S_{t-s}^*(\nabla\cdot H(s,\cdot))(x)\,ds,
\quad R\to\infty.
\]

It remains to let \(\varepsilon\to 0\). Defining \(f_\varepsilon(s):=\mathbbm{1}_{(0,t-\varepsilon)}(s)\nabla_x S_{t-s}^* (\nabla\cdot H(s,\cdot))(x)\) with \(s\in(0,t)\), we note that \(f_\varepsilon(s)\to f(s)\) with \(f(s):=\nabla_x S_{t-s}^* (\nabla\cdot H(s,\cdot))(x)\) as \(\varepsilon\to 0\). We now need an \(L^1(0,t)\)-majorant independent of \(\varepsilon\).  Given~\eqref{nablaGnablaH}, one can bound
\begin{multline}\label{fs_aux}
|f(s)|\lesssim 
\frac{c_2 \Xi(0)^{d/2+1}}{\min(1,\sqrt{s/2})(1-\rho_{t-s}^2)^{d/2+1}}
\int_{\R^d} \Vert x-\rho_{t-s} y\Vert_2 
(1+\Vert y\Vert_2)^{2} \\
\times
\exp\left\{
-\left(
\frac{\Xi(0)\Vert x-\rho_{t-s} y\Vert_2^2}{1-\rho_{t-s}^2}
+\frac{5}{4}\Xi(0)\Vert y\Vert_2^2
\right)
\right\}\,dy\\
\lesssim 
\frac{c_2 \Xi(0)^{d/2+1}}{\min(1,\sqrt{s/2})(1-\rho_{t-s}^2)^{d/2+1}}
\exp\left\{
-\frac{5\Xi(0)^2}{4\Xi(0)\rho_{t-s}^2+5\Xi(0)(1-\rho_{t-s}^2)}
\Vert x\Vert_2^2
\right\}\\
\times \int_{\R^d}
\Vert x-\rho_{t-s}y\Vert_2(1+\Vert y\Vert_2)^{2}
\exp\left\{
-A_{1,\Xi}
\left\Vert 
y-\frac{A_{2}}{A_{1,\Xi}}x
\right\Vert_2^2
\right\}
\,dy,
\end{multline}
where \(A_{1,\Xi}\) and \(A_2\) are defined as 
\begin{equation*}\label{A12}
A_{1,\Xi}:= \frac{4\Xi(0)\rho_{t-s}^2+5\Xi(0)(1-\rho_{t-s}^2)}{4(1-\rho_{t-s}^2)}
\quad\text{and}\quad
A_{2}:=\frac{\Xi(0)\rho_{t-s}}{1-\rho_{t-s}^2}.
\end{equation*}
Making a change of variables \(z=y-A_{2}x/A_{1,\Xi}\) and observing that
\[
\Vert x-\rho_{t-s}y\Vert_2
\leq \frac{5\Xi(0)}{4A_{1,\Xi}}\Vert x\Vert_2
+\rho_{t-s}\Vert z\Vert_2
\]
and
\[
1+\Vert y\Vert_2 
\leq 1+\Vert z\Vert_2 + \frac{A_{2}}{A_{1,\Xi}}\Vert x\Vert_2
\leq \left(1+\frac{A_{2}}{A_{1,\Xi,}}\Vert x\Vert_2\right)
(1+\Vert z\Vert_2),
\]
we then bound the space-integral by
\begin{multline}\label{aux_fs}
	\int_{\R^d}
\Vert x-\rho_{t-s}y\Vert_2(1+\Vert y\Vert_2)^{2}
\exp\left\{
-A_{1,\Xi}
\left\Vert 
y-\frac{A_{2}}{A_{1,\Xi}}x
\right\Vert_2^2
\right\}
\,dy\\
\leq 
\left(
1+\frac{A_{2}}{A_{1,\Xi}}\Vert x\Vert_2
\right)^{2}
\int_{\R^d}
\left(
\frac{5\Xi(0)}{4A_{1,\Xi}}\Vert x\Vert_2 
+\rho_{t-s} \Vert z\Vert_2
\right) (1+\Vert z\Vert_2)^{2}
\exp\{-A_{1,\Xi}\Vert z\Vert_2^2\}
\,dz\\
\lesssim 
A_{1,\Xi}^{-(d+1)/2}
\left(1+A_{1,\Xi}^{-1}\right)
\left(
\frac{5\Xi(0)}{4\sqrt{A_{1,\Xi}}}
\Vert x\Vert_2
+\rho_{t-s})
\right)\left(
1+\frac{A_{2}}{A_{1,\Xi}}\Vert x\Vert_2
\right)^{2}.
\end{multline}
Observing that 
\[
A_{1,\Xi}
\geq \frac{\min(\Xi(0),5\Xi(0)/4)}{1-\rho_{t-s}^2}
=\frac{\Xi(0)}{1-\rho_{t-s}^2},
\quad \quad 
\frac{A_{2}}{A_{1,\Xi}}
\leq \frac{\Xi(0)}{\min(\Xi(0),5\Xi(0)/4)}
=1,
\]
and 
\[
\frac{5\Xi(0)^2}{4\Xi(0)\rho_{t-s}^2+5\Xi(0)(1-\rho_{t-s}^2)} 
\geq \frac{5\Xi(0)^2}{\max\left(
4\Xi(0), 5\Xi(0)
\right)}
=\Xi(0),
\]
and substituting these into~\eqref{aux_fs} and then~\eqref{fs_aux}, we obtain
\begin{multline}\label{fs_aux2}
|f(s)|
\lesssim \frac{c_2 \Xi(0)^{d/2+1}}{\min(\Xi(0),1)^{(d+1)/2+3}}	
(1+\Xi(0)\Vert x\Vert_2)^{2}\exp\left\{
-\Xi(0)\Vert x\Vert_2^2
\right\}\\
\times 
\frac{1}{\min(1,\sqrt{s/2})} 
\left(
\Xi(0)\Vert x\Vert_2
+\frac{\rho_{t-s}}{\sqrt{1-\rho_{t-s}^2}}
\right),
\end{multline}
which is integrable near \(t\) by integrability of \((t-s)^{-1/2}\) and near zero due to\newline \(\int_0^t (\sqrt{s}\sqrt{t-s})^{-1}\,ds=\pi\). 
Hence, by the dominated convergence 
\[
\nabla_x I_\varepsilon(t,x)
\rightarrow \int_0 ^t \nabla_x S_{t-s}^* (\nabla\cdot H(s,\cdot))(x)\,ds,
\quad \varepsilon\to 0,
\]
i.e., 
\begin{equation}
\nabla_x p_t(x)
=\nabla_x S_t^* p_0(x) 
+\int_0^t \nabla_x S_{t-s}^*(\nabla\cdot H(s,\cdot))(x)\,ds.
\end{equation}

We should now bound the two terms above. For the first term, using Assumption~\ref{ass:main} and the explicit formula for the \(G_{t-s}(x,y)\), we get
\begin{multline*}
\nabla_x S_t^* p_0(x)
\lesssim C_0 \Xi(0)^{d/2+1}
\int_{\R^d}
\frac{\left\Vert x-\rho_t y\right\Vert_2}{(1-\rho_t^2)^{d/2+1}}
\exp\left\{
-\frac{\Xi(0)}{1-\rho_t^2}
\left\Vert x-\rho_t y\right\Vert_2^2
-b_0\Vert y\Vert_2^2
\right\}
\,dy\\
=C_0 \Xi(0)^{d/2+1}
\exp\left\{
-\frac{\Xi(0)b_0}{\Xi(0) \rho_t^2+b_0(1-\rho_t^2)}\Vert x\Vert_2^2
\right\}\\
\times
\int_{\R^d}
\frac{\left\Vert x-\rho_t y\right\Vert_2}{(1-\rho_t^2)^{d/2+1}}
\exp\left\{
-\frac{\sigma_t}{1-\rho_t^2}\Vert y-\mu\Vert_2^2
\right\}
\,dy,
\end{multline*}
where \(\sigma_t:=\Xi(0)\rho_t^2+b_0(1-\rho_t^2)\) and \(\mu:=\Xi(0)\rho_t x/\sigma_t\). Defining \(Z\) a normally distributed random variable with mean \(\mu\) and covariance matrix \((1-\rho_t^2)/(2\sigma_t)\), we then get
\begin{multline*}
	\int_{\R^d}
\frac{\left\Vert x-\rho_t y\right\Vert_2}{(1-\rho_t^2)^{d/2+1}}
\exp\left\{
-\frac{\sigma_t}{1-\rho_t^2}\Vert y-\mu\Vert_2^2
\right\}
\,dy
\lesssim 
\frac{1}{(1-\rho_t^2)\sigma_t^{d/2}}
\E\left[
\Vert x-\rho_t Z\Vert_2
\right]\\
\lesssim \frac{1}{(1-\rho_t^2)\sigma_t^{d/2}}
\left(
\Vert x-\mu \rho_t\Vert_2 + \Vert Z-\mu\Vert_2
\right)\\
\hspace{0.4cm}\lesssim \frac{1}{(1-\rho_t^2)\sigma_t^{d/2}}
\left(
\sqrt{\frac{1-\rho_t^2}{\sigma_t}}\Vert x\Vert_2
+\sqrt{\frac{1-\rho_t^2}{\sigma_t}}
\right)\\
=\frac{1}{\sigma_t^{(d+1)/2}\sqrt{1-\rho_t^2}}\left(1+\Vert x\Vert_2
\right)
\lesssim \frac{1+\Vert x\Vert_2}{\sigma_t^{(d+1)/2}\sqrt{1-\rho_t^2}},
\end{multline*}
from which
\begin{multline}\label{reg0}
\nabla_x S_t^* p_0(x)
\lesssim \frac{C_0 \Xi(0)(1+\Vert x\Vert_2)}{\left(\Xi(0) e^{-4\Xi(0) t}+b_0(1-e^{-4\Xi(0) t})\right)^{(d+1)/2}}\\
\times \frac{1}{\sqrt{1-e^{-4\Xi(0) t}}}
\exp\left\{
-\frac{\Xi(0)b_0}{\Xi(0) e^{-4\Xi(0) t}+b_0(1-e^{-4\Xi(0)t})}\Vert x\Vert_2^2
\right\}.
\end{multline}
To bound the second term, we consider separately two regimes: \(t-s\leq 1\) and \(t-s>1\). In the first regime, one can employ~\eqref{fs_aux2} to get that, since 
\[
\int_{\max(0,t-1)}^t 
\frac{1}{\min(1,\sqrt{s/2})}
\,ds
=2\sqrt{2t}\mathbbm{1}_{t\leq 1}
+(\sqrt{t}-\sqrt{t-1})\mathbbm{1}_{t>1}
\lesssim 1
\]
and
\begin{multline*}
\int_{\max(0,t-1)}^t 
\frac{\rho_{t-s}}{\min(1,\sqrt{s/2})\sqrt{(1-\rho_{t-s}^2)}}
\,ds
=\sqrt{2}\int_{\max(0,t-1)}^t 
\frac{\rho_{t-s}}{\sqrt{s(1-\rho_{t-s}^2)}}
\,ds\\
\leq 
\sqrt{2}\int_{\max(0,t-1)}^t \frac{1}{\sqrt{t-r}\sqrt{1-e^{-2\Xi(0) r}}}
\,dr
\leq \frac{\sqrt{2}}{\sqrt{1-e^{-2\Xi(0)}}}\int_0^1 \frac{1}{\sqrt{(t-r)r}}\,dr\\
=\frac{\sqrt{2}\pi}{\sqrt{1-e^{-2\Xi(0)}}},
\quad t\leq 1,
\end{multline*}
whereas 
\begin{multline*}
\int_{\max(0,t-1)}^t 
\frac{\rho_{t-s}}{\min(1,\sqrt{s/2})\sqrt{(1-\rho_{t-s}^2)}}
\,ds
\leq \int_{0}^1
\frac{1}{\sqrt{1-e^{-2\Xi(0)r}}}\,dr\\
\leq \frac{1}{\sqrt{1-e^{-2\Xi(0)}}}
\int_0^1 r^{-1/2}\,dr
=\frac{2}{\sqrt{1-e^{-2\Xi(0)}}},
\end{multline*}
it follows that
\begin{multline}\label{reg1}
\int_{\max(0,t-1)}^t \nabla_x S_{t-s}^*(\nabla\cdot H(s,\cdot))(x)\,ds
\lesssim 
\frac{c_2 \Xi(0)^{d/2+1}}{\min(\Xi(0),1)^{(d+1)/2+3}}	
(1+\Xi(0)\Vert x\Vert_2)^{2}\\
\times 
\left(
\Xi(0)\Vert x\Vert_2
+\frac{1}{\sqrt{1-e^{-2\Xi(0)}}}
\right)
\exp\left\{
-\Xi(0)
\Vert x\Vert_2^2
\right\}.
\end{multline}

Since on the region \(t-s>1\) the estimate~\eqref{nablaGnablaH} would produce a non-decaying term in \(t\), we use the divergence structure there instead. Namely, we note that, since
\[
S_{t-s}^*(\nabla_y\cdot H(s,\cdot))(x)
=
-\int_{\mathbb R^d}
\nabla_yG_{t-s}(x,y)\cdot H(s,y)\,dy,
\]
it holds that
\[
\nabla_x S_{t-s}^*(\nabla_y\cdot H(s,\cdot))(x)
=
-\int_{\mathbb R^d}
\nabla_x\nabla_yG_{t-s}(x,y)\,H(s,y)\,dy,
\]
from which
\[
\left\Vert 
\nabla_x S_{t-s}^*(\nabla_y\cdot H(s,\cdot))(x)
\right\Vert_2
\le
\int_{\mathbb R^d}
\left\Vert\nabla_x\nabla_yG_{t-s}(x,y)\right\Vert
\left\Vert H(s,y)\right\Vert_2\,dy .
\]
Given that
\[
\nabla_y G_{t-s}(x,y)
=
\frac{2\Xi(0)\rho_{t-s}}{1-\rho_{t-s}^2}
(x-\rho_{t-s}y)G_{t-s}(x,y),
\]
we have
\begin{multline*}
\left\|\nabla_x\nabla_yG_{t-s}(x,y)\right\|_{\mathrm{op}}
\lesssim
\frac{\Xi(0)^{d/2+1}\rho_{t-s}}
{(1-\rho_{t-s}^2)^{d/2+1}}
\left(
1+
\frac{\Xi(0)\Vert x-\rho_{t-s}y\Vert_2^2}
{1-\rho_{t-s}^2}
\right)\\
\times
\exp\left\{
-\frac{\Xi(0)\Vert x-\rho_{t-s}y\Vert_2^2}
{1-\rho_{t-s}^2}
\right\}.
\end{multline*}
Using again that by~\eqref{gaussbound_pt_unif}
\[
\Vert H(s,y)\Vert_2
\lesssim c^2 L_{\mathrm{Lip}}(\Xi)
\Vert y\Vert_2\exp\{-2\Xi(0)\Vert y\Vert_2^2\},
\]
we get, by the exact same way as before, 
\begin{multline*}
\int_{\R^d} 
\left(
1+
\frac{\Xi(0)\Vert x-\rho_{t-s}y\Vert_2^2}
{1-\rho_{t-s}^2}
\right)
\Vert y\Vert_2\exp\left\{
-\frac{\Xi(0)\Vert x-\rho_{t-s}y\Vert_2^2}
{1-\rho_{t-s}^2}
-2\Xi(0)\Vert y\Vert_2^2
\right\}
\,dy\\
\lesssim 
(1-\rho_{t-s}^2)^{d/2}
\frac{(1+\Xi^2(0)\Vert x\Vert_2^2)(1+\Xi(0)\Vert x\Vert_2)}{\min(\Xi(0),1)^{(d+5)/2}}
\exp\left\{
-\frac{2\Xi(0)^2}{\max(\Xi(0),2\Xi(0))}\Vert x\Vert_2^2
\right\},
\end{multline*}
from which 
\begin{multline*}
\int_0 ^{\max(0,t-1)} \nabla_x S_{t-s}^*(\nabla_y\cdot H(s,\cdot))(x)\,ds\\
\lesssim \Xi(0)^{d/2+1}c^2 L_{\mathrm{Lip}}(\Xi)
\frac{(1+\Xi^2(0)\Vert x\Vert_2^2)(1+\Xi(0)\Vert x\Vert_2)}{\min(\Xi(0),1)^{(d+5)/2}}\\
\times \exp\left\{
-\Xi(0)\Vert x\Vert_2^2
\right\}
\int_0^{\max(0,t-1)} \frac{\rho_{t-s}}{1-\rho_{t-s}^2}\,ds.
\end{multline*}
In the region \(t-s>1\), the denominator is bounded from below by \(1-\rho_{t-s}^2\geq 1-e^{-4\Xi(0)}>0\), yielding that
\begin{multline*}
\int_0^{\max(0,t-1)} \frac{\rho_{t-s}}{1-\rho_{t-s}^2}\,ds
\leq 
\frac{1}{1-e^{-4\Xi(0)}}
\int_0^t
e^{-2\Xi(0) s}\,ds\\
\leq \frac{1-e^{-2\Xi(0) t}}{2\Xi(0)(1-e^{-4\Xi(0)})}
\leq \frac{1}{2\Xi(0)(1-e^{-4\Xi(0)})}.
\end{multline*}
Combining the above with~\eqref{reg0} and~\eqref{reg1}, gathering also, with some abuse of notation, all the constants under a generic constant \(c_2(d,\kappa,K,b_0,c_0,\Xi(0),L_{\mathrm{Lip}}(\Xi))\), we obtain the result. 

\subsection{Proof of Lemma~\ref{momentcontrol}}
Applying the It\^o formula to \(f\colon (t,x)\mapsto \frac{1}{2}e^{2\varkappa t}\Vert x\Vert_2 ^2\), we get, for any \(t\geq 0\),
    \begin{multline*}
        \frac{1}{2}e^{2\varkappa t} \Vert X_t\Vert_2^2
        = \frac{1}{2}\Vert X_0\Vert_2^2
        +\varkappa \int_0 ^t e^{2\varkappa s}\Vert X_s\Vert_2^2\, ds
        -2\int_0 ^t e^{2\varkappa s} X_s \Xi(p_s(X_s))\cdot X_s\,ds\\
        +\sqrt{2}\int_0 ^t e^{2\varkappa s} X_s \cdot dW_s
        +\frac{d(e^{2\varkappa t}-1)}{2\varkappa}.
    \end{multline*}
    Since \(\mathbb{E}[\int_0 ^t e^{4\varkappa s} \Vert X_s\Vert^2_2 \,ds]<\infty\), the process \(M_t:=\int_0 ^t e^{2\varkappa s} X_s \cdot dW_s\) is a square-integrable martingale, and one can apply the Burkholder-Davis-Gundy inequality to get, for any \(p\geq 2\) and any \(T>0\) fixed,
    \begin{eqnarray*}
        \mathbb{E}\left[ 
        \sup\limits_{0\leq t\leq T}
        |M_t|^{p/2}
        \right]
        &\leq& (pC_{\text{BDG}})^{p/4} \mathbb{E}\left[
        \langle M, M\rangle_T ^{p/4}
        \right]\\
        &=&(pC_{\text{BDG}})^{p/4} \mathbb{E}\left[
        \left(\int_0 ^T
        e^{4\varkappa s} \Vert X_s\Vert_2^2\,ds
        \right)^{p/4}
        \right]\\
        &\leq& (pC_{\text{BDG}})^{p/4} \mathbb{E}\left[
        \left(
        \sup\limits_{0\leq s\leq T} 
        e^{2\varkappa s} \Vert X_s\Vert_2^2 \int_0 ^T e^{2\varkappa s}\,ds
        \right)^{p/4}
        \right]\\
        &\leq& 
        \left(
        \frac{pC_{\text{BDG}}e^{2\varkappa T}}{2\varkappa}
        \right)^{p/4}
        \mathbb{E}\left[
        \left(
        \sup\limits_{0\leq s\leq T} 
        e^{2\varkappa s} \Vert X_s\Vert_2^2 \,ds
        \right)^{p/4}
        \right],
    \end{eqnarray*}
    where \(C_{\text{BDG}}=C_{\text{BDG}}(p)>0\) is some fixed constant.
    Now, by the relation between geometric and arithmetic means, it can be seen that
    \[
    \mathbb{E}\left[
        \left(
        \sup\limits_{0\leq s\leq T} 
        e^{2\varkappa s} \Vert X_s\Vert_2^2 
        \right)^{p/4}
        \right]
        \leq A+\frac{1}{A}\mathbb{E}\left[
        \left(
        \sup\limits_{0\leq s\leq T} 
        e^{2\varkappa s} \Vert X_s\Vert_2^2 \,ds
        \right)^{p/2}
        \right]
    \]
    for any \(A>0\). Hence, under the choice \(A:=3^{p/2-1}\cdot 2^{3p/4+1}\left(C_{\text{BDG}}pe^{2\varkappa T}/(2\varkappa)\right)\) it follows that
    \begin{multline}\label{moment_aux}
        \mathbb{E}\left[
        \left(
        \sup\limits_{0\leq t\leq T} 
        e^{2\varkappa t} \Vert X_t\Vert_2^2
        \right)^{p/2}
        \right]
        \leq
        3^{p/2-1} \mathbb{E}\left[
        \Vert X_0\Vert_2^p
        \right]
        +3^{p/2-1} 2^{3p/4}\mathbb{E}\left[
        \sup\limits_{0\leq t\leq T} 
        |M_t|^{p/2}
        \right]\\
        + 3^{p/2-1} \mathbb{E}\left[
        \left(
        \sup\limits_{0\leq t\leq T} 
        \int_0 ^t \left(
        2\varkappa \Vert X_s\Vert_2^2 
        -4\Xi(p_s(X_s))\Vert X_s\Vert_2 ^2 +2d
        \right)e^{2\varkappa s}\,ds
        \right)^{p/2}
        \right]\\
        \leq 3^{p/2-1} \mathbb{E}\left[
        \Vert X_0\Vert_2^p
        \right]
        + 3^{p-2} \cdot 2^{3p/4+1} \left(\frac{pC_{\text{BDG}}e^{2\varkappa T}}{2\varkappa}\right)^{p/2}
        +\frac{1}{2}\mathbb{E}\left[
        \left(
        \sup\limits_{0\leq t\leq T} 
        e^{2\varkappa t} \Vert X_t\Vert_2^2
        \right)^{p/2}
        \right]\\
        + 3^{p/2-1} \mathbb{E}\left[
        \left(
        \sup\limits_{0\leq t\leq T} 
        \int_0 ^t \left(
        2\varkappa \Vert X_s\Vert_2^2 
        -4\Xi(p_s(X_s))\Vert X_s\Vert_2 ^2 +2d
        \right)e^{2\varkappa s}\,ds
        \right)^{p/2}
        \right].
            \end{multline}
        Noting that by Assumption~\ref{ass:main} it holds that
        \begin{equation}\label{dissipativity}
        <-\Xi(p_t(x))\nabla \Phi(x),x>
        =-2\Xi(p_t(x))\Vert x\Vert_2^2
        \leq -2\varkappa \Vert x\Vert_2^2,
        \end{equation}
        we get that the last summand in~\eqref{moment_aux} is upper bounded by
        \begin{multline*}
        3^{p/2-1} \mathbb{E}\left[
        \left(
        \sup\limits_{0\leq t\leq T} 
        \int_0 ^t \left(
        2\varkappa \Vert X_s\Vert_2^2 
        -4\Xi(p_s(X_s))\Vert X_s\Vert_2 ^2 +d
        \right)e^{2\varkappa s}\,ds
        \right)^{p/2}
        \right]\\
        \leq 3^{p/2-1} 
        \sup\limits_{0\leq t\leq T}
        \left(
        \int_0 ^t de^{2\varkappa s}\,ds
        \right)^{p/2}
        =3^{p/2-1}\left(
        \frac{d}{2\varkappa} \left(
        e^{2\varkappa T}-1
        \right)
        \right)^{p/2}.
        \end{multline*}
        This implies that
        \begin{multline*}
            e^{-\varkappa T}\left(
            \mathbb{E}\left[
        \left(
        \sup\limits_{0\leq t\leq T} 
        e^{2\varkappa t} \Vert X_t\Vert_2^2
        \right)^{p/2}
        \right]
            \right)^{1/p}\\
            \leq 
            3^{1/2-2/p+1/p^2}
            \left(
            \mathbb{E}\left[
        \Vert X_0\Vert_2^p
        \right]
        \right)^{1/p}
        +3^{3(1/2-1/p)}\cdot 2^{3/2+1/p}
        \sqrt{\frac{C_{\text{BDG}}}{\varkappa}}
        \sqrt{p}
        +3^{1/2-2/p+1/p^2}
        \sqrt{\frac{d}{\varkappa}}\\
        \leq \left(
        \sqrt{3}
        \left(
            \mathbb{E}\left[
        \Vert X_0\Vert_2^p
        \right]
        \right)^{1/p}
        +6\sqrt{6} 
        \sqrt{\frac{C_{\text{BDG}}}{\varkappa}}
        +\sqrt{3} 
        \sqrt{\frac{d}{\varkappa}}
        \right)
        \sqrt{p}.
        \end{multline*}
        Finally, noting that
        \[
            e^{-\varkappa T} \left(
            \mathbb{E}\left[
        \left(
        \sup\limits_{0\leq t\leq T} 
        e^{2\varkappa t} \Vert X_t\Vert_2^2
        \right)^{p/2}
        \right]
            \right)^{1/p} 
            \geq \sup\limits_{0\leq t\leq T}
            \left(
            \mathbb{E}\left[
            \Vert 
            X_t
            \Vert_2 ^p
            \right]
            \right)^{1/p},
        \]
        and \(T>0\) is arbitrary,
        we conclude the claim.

\subsection{Proof of Lemma~\ref{lem:expbound_X}}
Let \(A_t(x):=\int_0^t \Vert x\Vert_2^2\,ds\) and \(V(x):=\exp\{a\Vert x\Vert_2^2\}\) with \(a>0\). It can be seen that the process \(\widetilde{X}_t:=(A_t(X),X_t^1,\dots, X_t^d)\) is a \((d+1)\)-dimensional semimartingale. Applying the It\^{o} formula to the function \(f(\widetilde{x})=f(a,x):=\exp\{\lambda A_t(x)\}V(x)\) with some \(\lambda>0\) we get
    \begin{multline*}
        e^{\lambda A_t(X)}V(X_t)
        =V(X_0)+\sqrt{2}\int_0^t 
        e^{\lambda A_s(X)} \nabla V(X_s)\cdot dW_s\\
        +\int_0^t e^{\lambda A_s(X)}\left(
        \lambda V(X_s)\Vert X_s\Vert_2^2
        -2X_s\Xi(p_s(X_s))\cdot \nabla V(X_s)
        +\Delta V(X_s)
        \right)\,ds.
    \end{multline*}
    Given that \(\nabla V(x)=2axV(x)\), \(\Delta V(x)=(2ad+4a^2\Vert x\Vert_2^2)V(x)\), and~\eqref{dissipativity} holds, it then follows that
    \begin{multline*}
      e^{\lambda A_t(X)}V(X_t)
      \leq V(X_0)+\sqrt{2}\int_0^t 
        e^{\lambda A_s(X)} \nabla V(X_s)\cdot dW_s\\
        +\int_0^t e^{\lambda A_s(X)}\left(
        2ad+(\lambda-\delta)\Vert X_s\Vert_2^2
        \right) V(X_s)\,ds,
    \end{multline*}
    where \(\delta:=4a(\kappa-a)\). Chossing \(a\in(0,\kappa)\) and \(\lambda\in(0,\delta]\), one can then bound
    \begin{equation}\label{expb_aux}
    e^{\lambda A_t(X)}V(X_t)
      \leq V(X_0)+\sqrt{2}\int_0^t 
        e^{\lambda A_s(X)} \nabla V(X_s)\cdot dW_s
        +2ad\int_0^t e^{\lambda A_s(X)} V(X_s)\,ds.
    \end{equation}
    Define now a sequence of stopping times \(\tau_n:=\inf\{t\geq 0\colon \Vert X_t\Vert_2\geq n\}\). Since the stopped process \(e^{\lambda A_{\min(s,\tau_n)}(X)} \nabla V(X_{\min(s,\tau_n)})\) is square-integrable, the integral with respect to Brownian motion defines a martingale, and taking expectations to the both sides of~\eqref{expb_aux} evaluated at \(\min(t,\tau_n)\) we get
    \[
    \E\left[ 
    e^{\lambda A_{\min(t,\tau_n)}(X)}V(X_{\min(t,\tau_n)})
    \right]
    \leq \E\left[V(X_0)\right]
    +2ad\int_0^t \E\left[ 
    e^{\lambda A_{\min(s,\tau_n)}(X)} V(X_{\min(s,\tau_n)})
    \right]\,ds.
    \]
    By Gr\"onwall's inequality, it follows that
    \[
    \E\left[ 
    e^{\lambda A_{\min(t,\tau_n)}(X)}V(X_{\min(t,\tau_n)})
    \right]
    \leq
    e^{2adT}\E\left[V(X_0)\right],
    \]
    and, since \(V(x)\geq 1\) for all \(x\in\R^d\), 
    \[
    \E\left[ 
    e^{\lambda A_{\min(t,\tau_n)}(X)}
    \right]
    \leq
    e^{2adT}\E\left[V(X_0)\right].
    \]
    Since \(A_{\min(t,\tau_n)}(X)\uparrow A_t\), we conclude by the monotone convergence that 
    \(
    \E\left[ 
    e^{\lambda A_t(X)}
    \right]
    \leq
    e^{2adT}\E\left[V(X_0)\right],
    \)
    and, since \(a=\kappa/2\) maximises the interval for \(\lambda\), get the claim.

        \section{Auxiliary results}\label{sec:aux}
\begin{lem}\label{lem:KL_lower}
    Let \(p\) and \(q\) be two probability densities on \(\R^d\), \(d\geq 1\), such that \(p \ll q\). Then
    \begin{equation}\label{KL32}
    KL(p\Vert q) \geq 
    \frac{3}{2} \int_{\R^d} 
    \frac{(q(x)-p(x))^2}{p(x)+2q(x)}\,dx.
    \end{equation}
\end{lem}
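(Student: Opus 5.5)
Since \(p\ll q\) are probability densities, \(KL(p\Vert q)=\int q\,\phi(p/q)\,dx\) with \(\phi(r)=r\log r-r+1\ge 0\) (the term \(-r+1\) integrates to zero against \(q\)), while the right-hand side of~\eqref{KL32} equals \(\int q\,\psi(p/q)\,dx\) with
\[
\psi(r):=\frac{3}{2}\,\frac{(1-r)^2}{r+2},\quad r\ge 0.
\]
On \(\{q=0\}\) we have \(p=0\) a.e., so both integrands vanish there. The claim therefore reduces to the scalar inequality \(\phi(r)\ge\psi(r)\) for all \(r\ge 0\), i.e.
\[
r\log r-r+1\;\ge\;\frac{3(r-1)^2}{2(r+2)},\qquad r\ge0,
\]
after which we multiply by \(q\) and integrate. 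If \(KL(p\Vert q)=\infty\) the claim is trivial; otherwise every integrand is nonnegative and the integrals are well-defined.

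To prove the scalar inequality, set \(h(r):=\phi(r)-\psi(r)\). At \(r=1\) we have \(h(1)=0\), and since both \(\phi\) and \(\psi\) have a double zero at \(1\), also \(h'(1)=0\). The derivatives are
\[
\phi'(r)=\log r,\qquad \psi'(r)=\frac{3}{2}\Bigl(1-\frac{9}{(r+2)^2}\Bigr),
\]
so it suffices to show \(\log r\ge\psi'(r)\) for \(r\ge1\) and \(\log r\le \psi'(r)\) for \(r\le1\). Then \(h\) is decreasing on \((0,1]\) and increasing on \([1,\infty)\), which gives \(h\ge0\); continuity covers \(r=0\), where \(h(0)=1-3/4>0\).

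For the derivative comparison, let \(k(r):=\log r-\psi'(r)\), so \(k(1)=0\). Then
\[
k'(r)=\frac1r-\frac{27}{(r+2)^3},
\]
and \(k'(r)\ge0\) is equivalent to \((r+2)^3\ge 27r\). By AM--GM, \(r+2=r+1+1\ge3r^{1/3}\), hence \((r+2)^3\ge27r\), with equality only at \(r=1\). So \(k\) is nondecreasing with \(k(1)=0\), which is exactly the required sign pattern.

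The only mild obstacle is recognizing that the constant \(3/2\) and the denominator \(p+2q\) are tuned so that this third-order comparison closes via AM--GM. The rest is bookkeeping about the null sets where \(q=0\).
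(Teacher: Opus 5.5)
Your proof is correct and is essentially the paper's argument. The paper integrates the scalar bound $\log(1+z)\le z-\frac{z^2}{2+\frac43 z}$ for $z>-1$ against $p$ with $z=q/p-1$; substituting $r=1/(1+z)=p/q$ and multiplying by $r$ turns this into exactly your inequality $r\log r-r+1\ge \frac{3(r-1)^2}{2(r+2)}$ integrated against $q$. Your version adds two things the paper omits: a proof of the scalar inequality, which the paper only quotes, and explicit coverage of $\{p=0\}$ through the case $r=0$, which the paper's parametrization $z=(q-p)/p$ passes over.
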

\begin{proof}
    Indeed, one can rewrite
    \begin{eqnarray*}
        KL(p\Vert q)
        = \int_{\R^d} p(x)
        \log\left(
        \frac{p(x)}{q(x)}
        \right)\,dx
        &=&-\int_{\R^d} p(x)
        \log\left(
        \frac{q(x)}{p(x)}
        \right)\,dx\\
        &=&
        -\int_{\R^d} p(x)
        \log\left(
        1+\frac{q(x)-p(x)}{p(x)}
        \right)\,dx
    \end{eqnarray*}
    and, applying the generic inequality
    \[
    \log(1+z)\leq z-\frac{z^2}{2+\frac{4}{3}z}, \quad z>-1,
    \]
    obtain
    \begin{multline*}
        KL(p\Vert q)
        \geq
        -\int_{\R^d} p(x)\frac{q(x)-p(x)}{p(x)}\,dx
        +\int_{\R^d} p(x)
        \left(
        \frac{q(x)-p(x)}{p(x)}
        \right)^2
        \frac{1}{2+\frac{4}{3}\frac{q(x)-p(x)}{p(x)}}\,dx\\
        \geq \frac{3}{2}\int_{\R^d} 
    \frac{(q(x)-p(x))^2}{p(x)+2q(x)}\,dx.
    \end{multline*}
\end{proof}

\begin{lem}\label{lem:heat_aux}
Let \(Q:=J\times O\) be a bounded cylinder in \(\R_+\times\R^d\), where \(J\subset\mathbb R_+\) is a bounded open interval
and \(O\subset\mathbb R^d\) is a ball. Suppose that
\[
u_t-\Delta u=\operatorname{div}G+H
\]
in \(Q\) in the sense of distributions, with \(G\in C^{\beta/2,\beta}(Q)\) for some \(\beta\in(0,1)\) and \(H\in L^\infty(Q)\). 
Then the gradient of \(u\) exists in a classical sense and satisfies
\[
\nabla_x u\in C^{\beta/2,\beta}_{\mathrm{loc}}(Q).
\]
\end{lem}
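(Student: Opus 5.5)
We need to prove Lemma heat_aux: local Schauder-type gradient regularity for $u_t-\Delta u=\operatorname{div}G+H$ with $G$ Hölder and $H$ bounded.

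The plan is to localize, represent the localized solution by the heat kernel, and then read off the Hölder regularity of $\nabla_x u$ from potential estimates.

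\emph{Localization.} Fix nested cylinders $Q''\Subset Q'\Subset Q$ and a cutoff $\eta\in C_0^\infty(Q)$ with $\eta\equiv1$ on $Q'$. We assume $u\in L^1_{\mathrm{loc}}(Q)$, which is implicit in the distributional formulation. The function $v:=\eta u$ then solves, in $\mathcal D'(\R^{d+1})$,
\[
v_t-\Delta v=\operatorname{div}(\eta G)+\bigl(\eta H-G\cdot\nabla\eta\bigr)+\bigl(u(\partial_t\eta+\Delta\eta)-2\operatorname{div}(u\nabla\eta)\bigr)=:\operatorname{div}\widetilde G+\widetilde H+R .
\]
The remainder $R$ involves $u$ only where $\nabla\eta\neq0$ or $\partial_t\eta\neq0$, which is away from $Q'$. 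Let $\Gamma$ be the heat kernel. We write $v=\Gamma\ast(\operatorname{div}\widetilde G)+\Gamma\ast\widetilde H+\Gamma\ast R+h$, where $h$ is caloric on $\R^{d+1}$. Since $v$ is compactly supported, $h$ vanishes by uniqueness for compactly supported distributions, because $\Gamma$ is the fundamental solution.

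\emph{Estimating the pieces.} We treat the three convolutions in turn.

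1. For the divergence term, we use $\nabla_x\Gamma\ast\operatorname{div}\widetilde G=\sum_j\partial_j\nabla_x\Gamma\ast\widetilde G_j$. This is a parabolic Calderón--Zygmund singular integral, so the classical parabolic Schauder estimate maps $C^{\beta/2,\beta}_0$ into $C^{\beta/2,\beta}$. The cancellation $\int_{\partial B}\partial_{ij}\Gamma=0$ handles the principal-value part exactly as in the elliptic Hölder–Korn lemma.

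2. For the bounded term, $\nabla_x\Gamma$ is integrable with $|\nabla_x\Gamma(t,x)|\lesssim t^{-(d+1)/2}e^{-|x|^2/(8t)}$. Standard potential estimates therefore give $\nabla_x(\Gamma\ast\widetilde H)\in C^{\alpha/2,\alpha}$ for every $\alpha<1$, in particular for $\alpha=\beta$. The spatial difference is bounded by $\|\widetilde H\|_\infty|x-x'|\log(1/|x-x'|)$, and the time difference is handled analogously.

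3. For the remainder, $R$ is supported where $\eta$ is non-constant, at positive parabolic distance from $Q''$. The kernel is therefore smooth on $Q''$, and $\Gamma\ast R$ is $C^\infty$ there, since $u$ is a locally integrable distribution of finite order on the compact support.

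\emph{Conclusion and main obstacle.} Combining the three pieces, $\nabla_x u=\nabla_x v\in C^{\beta/2,\beta}(Q'')$. Since $Q''$ was arbitrary, this gives the claim, and the classical existence of $\nabla_x u$ follows from the continuity of the representation. The main obstacle is step 1, the Hölder continuity of the singular integral $\partial^2\Gamma\ast\widetilde G$. Rather than redoing it, I would cite it directly from the parabolic Schauder theory in divergence form (e.g.\ Lieberman, Theorem 4.8, or Krylov's lectures). A secondary point is justifying the representation formula for a merely distributional $u$. I would handle this by mollifying $u$ in $(t,x)$, applying the estimates to the mollified equation, and passing to the limit using the uniform Hölder bounds via Arzelà--Ascoli.
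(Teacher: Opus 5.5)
Your proof is correct. It follows a genuinely different route from the paper's, although the overall structure is parallel.

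The paper works on a subcylinder $Q_{2r}\Subset Q$. It cites Theorem 2.1 of Dong--Zhang to obtain a solution $w\in C^{(1+\beta)/2,1+\beta}(Q_{2r})$ of the same equation. It then uses hypoellipticity of $\partial_t-\Delta$ to conclude that $h=u-w$ is smooth.

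You instead cut off first and write the exact representation $\eta u=\Gamma*(\operatorname{div}\widetilde G+\widetilde H)+\Gamma*R$. On $Q'$ the term $\Gamma*R$ is caloric and plays the role of the paper's $h$. You get its smoothness directly from the smoothness of $\Gamma$ away from the origin, not from an abstract hypoellipticity theorem. Your particular solution is the explicit heat potential of the cut-off data.

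Your route has two advantages. First, existence of the regular particular solution is automatic. The paper's ``unique solution'' on $Q_{2r}$, with no initial or lateral data, implicitly requires extending $G$ and $H$ beyond $Q_{2r}$, and your $\eta$ does exactly that. Second, you make visible where $\beta<1$ is needed: the bounded term $\widetilde H$ only yields a log-Lipschitz gradient.

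The paper's route buys brevity, since the whole Schauder estimate, including the $L^\infty$ part, is handled by one citation. In the end both arguments rest on the same classical estimate for $D_x^2(\Gamma*f)$ with $f\in C^{\beta/2,\beta}$.

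You can drop the mollification and Arzel\`a--Ascoli step you planned for justifying the representation. Since $\eta u$ is a compactly supported distribution, the identity $\eta u=\Gamma*\bigl((\partial_t-\Delta)(\eta u)\bigr)$ holds directly in $\mathcal D'(\R^{d+1})$. The same observation shows that your assumption $u\in L^1_{\mathrm{loc}}$ is not needed.
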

\begin{proof}[Proof of the lemma]
Let \(Q_r\Subset Q_{2r}\) be nested cylinders with radii \(r, 2r>0\) such that \(Q_{2r}\Subset Q\). By Theorem 2.1 in~\cite{dong2015schauder}, there exists a unique solution to the equation 
\[
\partial_t w_t-\Delta w=\operatorname{div}G+H
\]
such that \(w\in C^{(1+\beta)/2,1+\beta}(Q_{2r})\). Hence, \(u\) and \(w\) solve the same equation in \(Q_{2r}\). Thus, \(h:=u-w\) satisfies 
\[
\partial_t h_t-\Delta h=0
\]
in \(Q_{2r}\) in the sense of distributions. Since the heat operator is hypoelliptic, it follows that \(h\in C^\infty_{\mathrm{loc}}(Q_{2r})\). From this we conclude that
\[
u=w+h\in C^{(1+\beta)/2,1+\beta}_{\mathrm{loc}}(Q_{2r}),
\]
and, in particular,
\(
\nabla_x u\in C_{\mathrm{loc}}^{\beta/2,\beta}(Q_r).
\)
Since \(Q_r\Subset Q\) was arbitrary,
\(
\nabla_xu\in C^{\beta/2,\beta}_{\mathrm{loc}}(Q).
\)
\end{proof}

\begin{lem}
The density \(p_t\) can be represented as
\begin{equation}\label{p_duh} 
p_t(x)=S_t^* p_0(x) + \int_0^t S_{t-s}^* (\nabla\cdot H(s,\cdot))(x)\,ds
\end{equation}
in the sense of distributions, where \(S_t^* f(x):=\int_{\R^d} G_t(x,y)f(y)\,dy\) for a given function \(f\), 
\[
G_t(x,y)
=
\left(
\frac{a_0}
{\pi(1-e^{-4a_0t})}
\right)^{d/2}
\exp\left\{
-\frac{
a_0\Vert x-e^{-2a_0t}y\Vert^2
}{
1-e^{-4a_0t}
}
\right\}
\]
is the Ornstein-Uhlenbeck kernel with some fixed \(a_0>0\),
and \(H(t,x):=2x\bigl(\Xi(p_t(x))-a_0\bigr)p_t(x)\).
\end{lem}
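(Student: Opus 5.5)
The plan is to derive \eqref{p_duh} by splitting the drift in \eqref{eq:NFPE} into its Ornstein--Uhlenbeck part and a remainder. Since \(\nabla\Phi(x)=2x\), we have
\[
\operatorname{div}\bigl(2x\,\Xi(p)p\bigr)=\operatorname{div}(2a_0xp)+\operatorname{div}H,
\]
so \(p\) solves, in the sense of distributions,
\begin{equation*}
\partial_t p=\mathcal{L}^*_{a_0}p+\nabla\cdot H,
\qquad
\mathcal{L}^*_{a_0}f:=\Delta f+\operatorname{div}(2a_0xf).
\end{equation*}
This is the Fokker--Planck equation of \eqref{OU_a0} with an additional divergence-form source. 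The OU transition density of \(dY=-2a_0Y\,dt+\sqrt2\,dW\) is Gaussian with mean \(e^{-2a_0t}y\) and covariance \(\frac{1-e^{-4a_0t}}{2a_0}I_d\), which is exactly \(G_t(x,y)\). Hence \(S_t^*\) is the adjoint semigroup, and \(\partial_t S_t^*f=\mathcal{L}^*_{a_0}S_t^*f\).

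The variation-of-constants step is carried out by duality. Fix \(t>0\) and \(\varphi\in C_0^\infty(\R^d)\), and set \(\psi_s:=S_{t-s}\varphi\) for \(s\in[0,t]\), where \(S_r\varphi(y)=\int G_r(x,y)\varphi(x)\,dx\) is the OU semigroup. Then \(\psi\) is smooth, satisfies \(\partial_s\psi_s+\mathcal{L}_{a_0}\psi_s=0\) with \(\mathcal{L}_{a_0}=\Delta-2a_0y\cdot\nabla\), and it and its first and second derivatives grow at most polynomially, uniformly in \(s\). Testing the weak form of \eqref{eq:NFPE} against the time-dependent test function \(\psi\) gives
\[
\int p_t\varphi\,dx-\int p_0\,S_t\varphi\,dx
=\int_0^t\!\!\int p_s\bigl(\partial_s\psi_s+\mathcal{L}_{a_0}\psi_s\bigr)\,dy\,ds
-\int_0^t\!\!\int H(s,y)\cdot\nabla\psi_s(y)\,dy\,ds .
\]
The first integral on the right vanishes. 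Writing the last term as \(\int_0^t\langle\nabla\cdot H(s,\cdot),S_{t-s}\varphi\rangle\,ds\) and applying Fubini then yields \eqref{p_duh} tested against \(\varphi\).

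The main obstacle is justifying the use of \(\psi\), which is neither compactly supported nor, a priori, an admissible test function. To handle this, I would use \(\chi_R\psi_s\) with \(\chi_R\) as in \eqref{cutoff}. Because \(p\in C^{1+\beta/2,2+\beta}_{\mathrm{loc}}\) by Theorem~\ref{C2+grad}, the product rule in time is legitimate on \((\varepsilon,t)\), and letting \(\varepsilon\to0\) uses weak continuity of \(p_s\) at \(0\). The cutoff produces error terms of the form
\[
\int p_s\bigl(|\nabla\chi_R|\,|\nabla\psi_s|+|\Delta\chi_R|\,|\psi_s|+|y|\,|\nabla\chi_R|\,|\psi_s|\bigr)\,dy ,
\]
supported in \(R\le\Vert y\Vert_2\le 2R\), together with a corresponding term involving \(H\). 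All of these vanish as \(R\to\infty\) by the Gaussian bound \eqref{gaussbound_pt_unif} (or \eqref{gaussbound_pt}) combined with the polynomial growth of \(\psi\); the Gaussian bound also supplies the Fubini integrability.

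Note that \(|H|\lesssim|y|p_s\) uses only the boundedness of \(\Xi\) and holds for any \(a_0\), so the formula is valid for every fixed \(a_0>0\).
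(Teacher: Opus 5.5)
Your proposal is correct and follows essentially the same route as the paper. The paper also splits off the OU part, pairs $p_s$ with $S_{t-s}\varphi$ via $h(s)=\int p_s S_{t-s}\varphi\,dx$ on $(\tau,t)$, and uses $\partial_s S_{t-s}\varphi=-L_0S_{t-s}\varphi$ so that only the $H$-term survives. It then lets $\tau\to0$ using the path continuity of the strong solution, which is what your weak continuity of $p_s$ at $0$ amounts to. The only difference is the justification for the non-compactly supported test function: you use an explicit cutoff $\chi_R$ together with the Gaussian tails, while the paper differentiates under the integral sign directly using the same tail bounds.
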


\begin{proof}
For a fixed \(a_0>0\), rewrite~\eqref{eq:NFPE} as
\begin{equation}
\label{eq:split}	
\partial_t p=L_0^* p+\nabla\cdot H,
\end{equation}	
where \(L_0^*p:=\Delta p+\nabla\cdot(2a_0xp)\) is the adjoint operator to \(L_0\varphi=\Delta\varphi-2a_0x\cdot\nabla\varphi\).
Denote by \((S_t)_{t\geq 0}\) the Markov semigroup generated by \(L_0\), i.e., 
\[
S_t\varphi(y)
=
\int_{\mathbb R^d}
G_t(x,y)\varphi(x)\,dx.
\]
For any \(0<\tau<t<T\) and any \(\varphi\in C_0^\infty\), it then holds that
\begin{equation*}\label{duham}
\int_{\R^d}
p_t(x)\varphi(x)\,dx
=\int_{\R^d} p_\tau(x)S_{t-\tau} \varphi(x)\,dx
-\int_\tau^t \int_{\R^d} 
H(s,x)\cdot \nabla S_{t-s}\varphi(x)\,dx\,ds.
\end{equation*}
Indeed, fix some \(t>0\) and define
\[
h(s):=\int_{\R^d} p_s(x)S_{t-s}\varphi(x)\,dx,
\quad s\in(\tau,t).
\]
Since under the Ornstein-Uhlenbeck semigroup \(S_{t-s}\varphi\), \(\nabla S_{t-s}\varphi\) and \(\Delta S_{t-s}\varphi\) are all bounded, \(L_0 S_{t-s}\varphi\) has at most linear growth, and since by Theorem~\ref{thm:wellposednessMV} the density \(p_s\) has uniformly Gaussian tails, \(p_sL_0 S_{t-s}\varphi\in L^1(\R^d)\). Hence, one can differentiate \(h\) under the integral sign, which, observing also that \(\partial_s S_{t-s}\varphi=-L_0 S_{t-s}\varphi\), yields
\begin{multline*}
\frac{d}{ds}h(s)
=\int_{\R^d} (\partial_s p_s(x))S_{t-s}	\varphi(x)\,dx
-\int_{\R^d} p_s(x) L_0 S_{t-s}\varphi(x)\,dx\\
=\int_{\R^d} L_0^* p_s(x) S_{t-s}\varphi(x)\,dx
+\int_{\R^d} (\nabla\cdot H(s,x))S_{t-s}\varphi(x)\,dx
-\int_{\R^d} p_s(x)L_0 S_{t-s}\varphi(x)\,dx\\
=\int_{\R^d} (\nabla\cdot H(s,x))S_{t-s}\varphi(x)\,dx,
\end{multline*}
where the last equality follows from the duality. Since \(S_{t-s}\) is bounded, and by~\eqref{gaussbound_pt} and~\eqref{gaussbound_grad} both \(p_t\) and \(\nabla p_t\) have Gaussian decay, we then have
\[
\frac{d}{ds}h(s)
=\int_{\R^d} (\nabla\cdot H(s,x))S_{t-s}\varphi(x)\,dx
=-\int_{\R^d} H(s,x)\cdot\nabla S_{t-s} \varphi(x)\,dx.
\]
Integrating the last equality over \((\tau,t)\), we then get
\begin{multline*}
h(t)-h(\tau)
=-\int_\tau ^t H(s,x)\cdot \nabla S_{t-s}\varphi(x)\,dx\,ds\\
\Leftrightarrow
\int_{\R^d} p_t(x)\varphi(x)\,dx
=\int_{\R^d} p_\tau(x)S_{t-\tau}\varphi(x)\,dx
-\int_\tau^t \int_{\R^d}	 H(s,x)\cdot\nabla S_{t-s}\varphi(x)\,dx\,ds.
\end{multline*}
Now, since by Theorem~\ref{thm:wellposednessMV} the process \(X\) is a strong solution, it has continuous paths, and \(X_\tau\to X_0\) almost surely as \(\tau\to 0\). Hence, \(S_{t-\tau}\varphi(X_\tau)\to S_t\varphi(X_0)\) a.s.\ as \(\tau\to 0\), and, since \(|S_t\varphi(x)|\leq\Vert \varphi\Vert_\infty\), by the dominated convergence it holds that \(\E[S_{t-\tau}\varphi(X_\tau)]\to \E[S_t\varphi(X_0)]\), i.e.,
\[
\int_{\R^d} p_\tau (x) S_{t-\tau}\varphi(x)\,dx
\to \int_{\R^d} p_0(x) S_t\varphi(x)\,dx,
\]
which yields~\eqref{p_duh}.
\end{proof}
        

\begin{thm}[Girsanov's theorem; see, e.g.,~\cite{KS-BM}]\label{girsanov_thm}
    Let \((\Omega, \{\F_t\}_{t\geq 0},\PP)\) be a probability space, and let \(\{\beta_t\}_{t\geq 0}\), be an \(\F_t\)-adapted stochastic process with values in \(\R^d\) such that
    \[
    \PP\left(
    \int_0 ^T \Vert \beta_s\Vert_2^2\,ds
    <\infty
    \right)
    =1, 
    \quad \forall\, 0<T<\infty.
    \]
    Then, if the process
    \[
    \mathcal{E}_t:=\exp\left\{
    \int_0 ^t \beta_s\,dW_s
    -\frac{1}{2}\int_0 ^t \Vert \beta_s\Vert_2^2\,ds
    \right\},
    \quad t\geq 0,
    \]
    is an \(\F_t\)-martingale under \(\PP\), then for any fixed \(0<T<\infty\) the process
    \[
    \widetilde{W}_t:= -\int_0 ^t \beta_s\,ds+ W_t, 
    \quad 0\leq t\leq T,
    \]
    is an \(\F_t\)-Brownian motion on \((\Omega,\{\F_t\}_{0\leq t\leq T},\widetilde{\PP})\) with \(\widetilde{\PP}\) a probability measure such that \(\frac{d\widetilde{\PP}}{d\PP}\Bigr|_{\F_T}=\mathcal{E}_T\).
\end{thm}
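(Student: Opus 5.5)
The plan is the classical route: construct $\widetilde{\PP}$ from the martingale density, show that each component of $\widetilde W$ is a continuous local martingale under $\widetilde{\PP}$ with the right brackets, and conclude with L\'evy's characterization of Brownian motion.

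\emph{Step 1: the new measure.} Since $\mathcal{E}$ is assumed to be a $\PP$-martingale with $\mathcal{E}_0=1$ and $\mathcal{E}_T>0$, we have $\mathsf{E}_{\PP}[\mathcal{E}_T]=1$. Hence $\widetilde{\PP}(A):=\mathsf{E}_{\PP}[\mathcal{E}_T\mathbbm{1}_A]$, $A\in\F_T$, is a probability measure on $\F_T$, equivalent to $\PP$ there. Moreover $\mathcal{E}_t=\mathsf{E}_{\PP}[\mathcal{E}_T\mid\F_t]$ is the density of $\widetilde{\PP}$ with respect to $\PP$ on $\F_t$ for every $t\leq T$.

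\emph{Step 2: the Bayes rule and the transfer of the local martingale property.} First I would establish the abstract Bayes formula: for $s\leq t\leq T$ and $\F_t$-measurable $Y\geq0$ or $\widetilde{\PP}$-integrable $Y$,
\[
\mathsf{E}_{\widetilde{\PP}}[Y\mid\F_s]=\mathcal{E}_s^{-1}\,\mathsf{E}_{\PP}[\mathcal{E}_tY\mid\F_s].
\]
From this it follows that an adapted continuous process $M$ is a $\widetilde{\PP}$-martingale on $[0,T]$ if and only if $M\mathcal{E}$ is a $\PP$-martingale. By localization, $M$ is a $\widetilde{\PP}$-local martingale if $M\mathcal{E}$ is a $\PP$-local martingale. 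For the localization I would use stopping times $\tau_n:=\inf\{t\colon |M_t|+\int_0^t\Vert\beta_s\Vert_2^2\,ds\geq n\}\wedge T$, together with optional stopping for the martingale $\mathcal{E}$.

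\emph{Step 3: It\^o computation.} For each component $i$,
\[
d\mathcal{E}_t=\mathcal{E}_t\,\beta_t\cdot dW_t
\quad\text{and}\quad
d\langle W^i,\mathcal{E}\rangle_t=\mathcal{E}_t\beta^i_t\,dt.
\]
The integration by parts formula then gives
\[
d(\widetilde W^i_t\mathcal{E}_t)=\widetilde W^i_t\,d\mathcal{E}_t+\mathcal{E}_t\,dW^i_t-\mathcal{E}_t\beta^i_t\,dt+\mathcal{E}_t\beta^i_t\,dt
=\widetilde W^i_t\,d\mathcal{E}_t+\mathcal{E}_t\,dW^i_t.
\]
The right-hand side consists of stochastic integrals only, so $\widetilde W^i\mathcal{E}$ is a continuous $\PP$-local martingale; here the integrability hypothesis on $\beta$ guarantees that these integrals are well defined. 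By Step 2, $\widetilde W^i$ is a continuous $\widetilde{\PP}$-local martingale.

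\emph{Step 4: brackets and L\'evy.} The process $\widetilde W-W$ has finite variation. Quadratic covariation is a pathwise limit in probability of sums of products of increments, and convergence in $\PP$-probability implies convergence in $\widetilde{\PP}$-probability because $\widetilde{\PP}\ll\PP$. Therefore
\[
\langle\widetilde W^i,\widetilde W^j\rangle_t=\langle W^i,W^j\rangle_t=\delta_{ij}t
\]
under $\widetilde{\PP}$ as well. L\'evy's characterization then yields that $\widetilde W$ is an $\F_t$-Brownian motion on $[0,T]$ under $\widetilde{\PP}$.

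The main obstacle is Step 2. There one must handle the Bayes formula and the localization carefully, using the martingale property of $\mathcal{E}$ rather than mere local martingality, so that the stopped versions of $\widetilde W^i\mathcal{E}$ are true martingales and the transfer to $\widetilde{\PP}$ is legitimate. The remaining steps are routine It\^o calculus.
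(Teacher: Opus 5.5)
Your proposal is correct. It is the standard proof in the Karatzas--Shreve reference the paper cites: the Bayes rule, transfer of the local martingale property through $\widetilde W^i\mathcal{E}$ using optional sampling for the true martingale $\mathcal{E}$, invariance of brackets under $\widetilde{\PP}\ll\PP$, and L\'evy's characterization. The paper gives no proof of its own.

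The one step worth writing out is why your $\tau_n$ make $(\widetilde W^i\mathcal{E})^{\tau_n}$ a true $\PP$-martingale. It is a continuous local martingale bounded by $n\,\mathcal{E}_{\cdot\wedge\tau_n}$. The family $\{\mathcal{E}_{\sigma\wedge\tau_n}=\E_{\PP}[\mathcal{E}_T\mid\F_{\sigma\wedge\tau_n}]\colon \sigma\leq T\text{ a stopping time}\}$ is uniformly integrable, which gives the martingale property. Alternatively, simply add the condition $\mathcal{E}_t\geq n$ to the stopping rule.
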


\begin{lem}\label{OU-expmoments}
    Let \(Y_t\) be the Ornstein-Uhlenbeck process defined in~\eqref{OU_a0}, and let \(c>0\) be a fixed constant. Then for any \(0<T<\infty\) fixed it holds that
    \begin{multline*}
        \E_{\PP_y}\left[ 
        \exp\left\{
        c\int_0 ^T \Vert Y_s\Vert_2^2
        \,ds
        \right\}
        \right]
        = 
        \left(
        \frac{\sqrt{a_0^2-c}}{a_0\sinh(2\sqrt{a_0^2-
        c}T)+\sqrt{a_0^2-c}\cosh(2\sqrt{a_0^2-c}T)}
        \right)^{d/2}
        \\\exp\left\{
        a_0 dT 
        +\frac{c(1-e^{-4\sqrt{a_0^2-c}T})}{
        2\left(
        (\sqrt{a_0^2-c}-a_0)e^{-4\sqrt{a_0^2-c}T} + \sqrt{a_0^2-c}+a_0
        \right)
        }
        \Vert y\Vert_2^2
        \right\},
    \end{multline*}
    provided that \(c<a_0^2\).
\end{lem}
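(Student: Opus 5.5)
The plan is to compute the expectation by the Feynman--Kac formula with an exponential-quadratic ansatz, which reduces the PDE to a scalar Riccati equation. The martingale property needed to make this rigorous will come from the same formula applied with a slightly larger constant.

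\textbf{Reduction and Riccati equation.} Since the components of \(Y\) in~\eqref{OU_a0} are independent one-dimensional Ornstein--Uhlenbeck processes \(dY^i_t=-2a_0Y^i_t\,dt+\sqrt2\,dW^i_t\), and \(\exp\{c\int_0^T\Vert Y_s\Vert_2^2ds\}\) factorises over the components, it suffices to treat \(d=1\) and raise the result to the power \(d\). For \(d=1\) I look for \(u(\tau,y)=\exp\{\alpha(\tau)+\beta(\tau)y^2\}\) solving
\[
\partial_\tau u=\partial_{yy}u-2a_0y\,\partial_yu+cy^2u,\qquad u(0,y)=1 .
\]
Matching coefficients of \(y^2\) and \(1\) gives
\[
\beta'=4\beta^2-4a_0\beta+c,\qquad \alpha'=2\beta,\qquad \alpha(0)=\beta(0)=0 .
\]
Put \(\lambda:=\sqrt{a_0^2-c}>0\), which is where \(c<a_0^2\) is used. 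The Riccati equation has the constant roots \(\beta_\pm=(a_0\pm\lambda)/2\), and the standard substitution gives
\[
\beta(\tau)=\frac{c(1-e^{-4\lambda\tau})}{2\bigl((\lambda-a_0)e^{-4\lambda\tau}+\lambda+a_0\bigr)},
\]
which is exactly the coefficient of \(\Vert y\Vert_2^2\) in the claim. In particular \(0\le\beta(\tau)<\beta_-\) for all \(\tau\), so there is no blow-up. Integrating \(2\beta\) gives
\[
\alpha(\tau)=a_0\tau-\tfrac12\log\Bigl(\cosh(2\lambda\tau)+\tfrac{a_0}{\lambda}\sinh(2\lambda\tau)\Bigr).
\]
After taking the \(d\)-th power, this produces the prefactor \(\bigl(\lambda/(a_0\sinh(2\lambda T)+\lambda\cosh(2\lambda T))\bigr)^{d/2}e^{a_0dT}\). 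These are routine checks.

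\textbf{Verification.} Define \(M_t:=u(T-t,Y_t)\exp\{c\int_0^t\Vert Y_s\Vert_2^2ds\}\) for \(t\in[0,T]\). Applying It\^o's formula and using the PDE, the \(dt\)-terms cancel, so \(M\) is a positive local martingale with \(M_0=u(T,y)\) and \(M_T=\exp\{c\int_0^T\Vert Y_s\Vert_2^2ds\}\). Positivity makes \(M\) a supermartingale, which already gives the inequality ``\(\le\)''.

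\textbf{Main obstacle: the reverse inequality.} To get equality I will show that the stopped family \((M_{t\wedge\tau_n})_n\), with \(\tau_n:=\inf\{t:\Vert Y_t\Vert_2\ge n\}\), is uniformly integrable. I would bound \(\E[M_{T\wedge\tau_n}^{p}]\) for some \(p>1\) close to \(1\), in three steps.
\begin{enumerate}
\item Use \(M^p\le\exp\{p\alpha+p\beta\Vert Y\Vert_2^2+pc\int\Vert Y\Vert_2^2\}\).
\item Choose \(p\) with \(pc<a_0^2\). By continuity of the Riccati solution in \(c\), \(p\beta_c(\tau)\le\beta_{pc}(\tau)+\delta\) on \([0,T]\) for small \(\delta\).
\item Apply the supermartingale bound just obtained, now with constant \(pc\) and a slightly enlarged quadratic terminal weight, which is still admissible because \(\beta_{pc}+\delta<\beta_-(pc)\). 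This yields \(\sup_n\E[M_{T\wedge\tau_n}^p]<\infty\).
\end{enumerate}
Then \(\E[M_T]=\lim_n\E[M_{T\wedge\tau_n}]=M_0\), which is the claimed identity.

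If the continuity comparison in step 2 proves messy, the fallback is a Girsanov change of drift from rate \(a_0\) to rate \(\lambda\). Under the new measure the density becomes an explicit quadratic exponential, and its martingale property can be checked on short time intervals via Novikov and then chained over \([0,T]\).
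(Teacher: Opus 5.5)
Your proposal is correct, and it takes a genuinely different route from the paper.

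\textbf{How the paper argues.} The paper splits $Y_t=\mu^{OU}(t)+Z_t$ and expands $Z$ in the Karhunen--Lo\`eve basis of the covariance operator $K^{OU}$. It evaluates the resulting Gaussian integrals term by term. It then has to identify two spectral quantities:
\begin{enumerate}
\item the quadratic form $\langle g,(I-2cK^{OU})^{-1}g\rangle$, obtained by turning the resolvent equation into a two-point boundary value problem;
\item the Fredholm determinant $\prod_n(1-2c\lambda_n)$, obtained from the eigenvalue equation $\tan(\beta T)=-\beta/(2a_0)$ and a Weierstrass factorisation of $2a_0\sin(zT)+z\cos(zT)$.
\end{enumerate}

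\textbf{How you argue, and what each route buys.} You bypass all of this with the exponential-quadratic Feynman--Kac ansatz. I checked that your $\beta(\tau)$ and $\alpha(\tau)$ solve $\beta'=4\beta^2-4a_0\beta+c$, $\alpha'=2\beta$ with zero initial data. After taking the $d$-th power they reproduce both factors of the statement exactly.
\begin{itemize}
\item Your route is shorter and needs no spectral theory, infinite products or entire-function arguments. It comes with a self-contained verification, and the supermartingale half already gives ``$\le$''. That upper bound is all that is used when the lemma checks Novikov's condition in the proof of Theorem~\ref{thm:wellposednessMV}.
\item The paper's route exhibits the answer as $\det(I-2cK^{OU})^{-d/2}$ times a resolvent form. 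This uses only the Gaussian structure, not the Markov property. It also makes the exact integrability threshold $c<(2\max_n\lambda_n)^{-1}$ visible, which is larger than $a_0^2$.
\end{itemize}

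\textbf{A point to tighten in your uniform-integrability step.} Suppose the ``slightly enlarged terminal weight'' means the Riccati solution $\tilde\beta$ with constant $pc$ started at $\delta$. Then $\tilde\beta$ does not stay $\delta$ above $\beta_{pc}$: the vector field $x\mapsto 4x^2-4a_0x+pc$ is decreasing below $a_0/2$, so the gap contracts. Hence your step~2 bound $p\beta_c\le\beta_{pc}+\delta$ does not by itself give $p\beta_c\le\tilde\beta$. Either of the following fixes removes the need for $\delta$:
\begin{itemize}
\item Since $p\cdot 4\beta^2=\tfrac4p(p\beta)^2\le 4(p\beta)^2$, the function $p\beta_c$ is a subsolution of the $pc$-Riccati equation. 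Hence $p\beta_c\le\beta_{pc}$ on $[0,T]$ by comparison.
\item More simply, $p\beta_c(\tau)<p\beta_-(c)\le\beta_-(pc)$, by convexity of $c\mapsto\beta_-(c)=\tfrac12\bigl(a_0-\sqrt{a_0^2-c}\bigr)$. Then $M^p_{T\wedge\tau_n}$ is bounded, up to the factor $e^{pd\alpha(T)}$, by
\[
\exp\Bigl\{2d\beta_-(pc)(T-t)+\beta_-(pc)\Vert Y_t\Vert_2^2+pc\int_0^t\Vert Y_s\Vert_2^2\,ds\Bigr\}\Big|_{t=T\wedge\tau_n}.
\]
This is a stopped local martingale built on the equilibrium solution, so its expectation equals its initial value.
\end{itemize}
Alternatively, you can use $\beta_{pc}+\delta$ directly as a time-dependent weight. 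It yields a positive local supermartingale as long as it stays below $a_0/2$, which also closes the argument.
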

\begin{proof}
Let us start with noting that~\eqref{OU_a0} admits a unique solution
            \[
            Y_t
            =Y_0 e^{-2a_0 t}
            +\sqrt{2}\int_0 ^t 
            e^{-2a_0(t-s)}\,dW_s,
            \quad t\geq 0,
            \]
            with mean \(\mu^{OU}(t):=\E[Y_t]=e^{-2a_0 t}\E[Y_0]\) and covariance function
            \begin{equation}\label{OU_cov}
            K^{OU}(s,t)=\frac{1}{2a_0}\left(
            e^{-2a_0|t-s|}
            -e^{-2a_0(t+s)}
            \right).
            \end{equation}
            Since conditional on \(Y_0=y\) the process \(Y_t\) can then be decomposed as \(Y_t=\mu^{OU}(t)+Z_t\), where \(Z_t\sim\mathcal{N}(0_d, K^{OU}(t,t) I_d)\) with \(0_d\) a \(d\)-dimensional vector of zeros, one can rewrite the expectation as
            \begin{multline*}
            \E_{\PP^{OU}_y}\left[
                \exp\left\{
                c\int_0 ^T \Vert Y_s\Vert_2^2\,ds
                \right\}\right]
                =\exp\left\{
                c\int_0 ^T\Vert \mu^{OU}(s) \Vert_2^2\,ds
                \right\}\\
                \times
                \E_{\PP^{OU}_y}\left[
                \exp\left\{
                2c\int_0 ^T 
                \mu^{OU}(s) \cdot Z_s\,ds
                +c\int_0 ^T 
                \Vert Z_s\Vert_2^2
                \right\}\right].
            \end{multline*}
            Let us now consider the operator 
            \begin{equation}\label{K-OU-operator}
            (K^{OU} f)(s)
            := \int_0 ^T K^{OU}(s,t) f(t)\,dt,
            \quad f\in L^2([0,T]), 
            \quad \forall 0\leq s\leq T.
            \end{equation}
            By the Mercer's theorem this operator has eigenfunctions \(\phi_n(s)\) forming an orthonormal basis in \(L^2([0,T])\) and nonnegative eigenvalues \(\lambda_n\) such that
            \[
            \int_0 ^T K^{OU}(s,t)
            \phi_n(t)\,dt
            =\lambda_n \phi_n(s),\quad
            \forall n\geq 1.
            \]
            In addition, due to the Karhunen-Lo\`eve theorem, one can decompose
            \[
            Z_s=\sum_{n=1}^\infty \phi_n(s)\xi_n,
            \]
            where \(\{\xi_n\}_{n\geq 1}\sim \text{i.i.d.} \mathcal{N}(0_d, \lambda_n I_d)\). Hence, defining also \(\mu_n:=\int_0 ^T\phi_n(s)\mu^{OU}(s)\,ds\), we can rewrite
            \begin{multline*}
                \exp\left\{
                c\int_0 ^T\Vert \mu^{OU}(s) \Vert_2^2\,ds
                \right\}
                \E_{\PP^{OU}_y}\left[
                \exp\left\{
                2c\int_0 ^T 
                \mu^{OU}(s) \cdot Z_s\,ds
                +c\int_0 ^T 
                \Vert Z_s\Vert_2^2
                \,ds
                \right\}\right]\\
                =\exp\left\{
                c\int_0 ^T\Vert \mu^{OU}(s) \Vert_2^2\,ds
                \right\}
                \E_{\PP^{OU}_y}\left[
                \exp\left\{
                2c\sum_{n=1}^\infty \mu_n\cdot\xi_n
                +c\int_0 ^T \left\Vert\sum_{n=1}^\infty \phi_n(s)\xi_n \right\Vert_2^2\,ds
                \right\}
                \right]\\
                =\exp\left\{
                c\int_0 ^T\Vert \mu^{OU}(s) \Vert_2^2\,ds
                \right\}
                \prod_{n=1}^\infty 
                \E_{\PP^{OU}_y}\left[ 
                e^{2c\mu_n\xi_n+c\Vert\xi_n\Vert_2^2}
                \right].
            \end{multline*}
            
            Since for each \(n\geq 1\)
            \begin{eqnarray*}
                \E_{\PP^{OU}_y}\left[ 
                e^{2c\mu_n\xi_n+c\Vert\xi_n\Vert_2^2}
                \right]
                &=&\frac{1}{(2\pi\lambda_n)^{d/2}}
                \int_{\R^d} e^{2c\mu_n z+c\Vert z\Vert_2^2 - \frac{\Vert z\Vert_2^2}{2\lambda_n}}\,dz\\
                &=&\frac{1}{(2\pi\lambda_n)^{d/2}}
                \int_{\R^d}e^{-\frac{1}{2\lambda_n}(1-2c\lambda_n)\Vert z\Vert_2^2+2c\mu_n z}\,dz\\
                &=&\frac{e^{\frac{2c^2\Vert \mu_n\Vert_2^2\lambda_n}{1-2c\lambda_n}}}{(2\pi\lambda_n)^{d/2}}
                \int_{\R^d} e^{-\frac{1-2c\lambda_n}{2\lambda_n} \Vert z-\frac{2c\mu_n\lambda_n}{1-2c\lambda_n}\Vert_2^2}\,dz\\
                &=& (1-2c\lambda_n)^{-d/2} e^{\frac{2c^2\Vert \mu_n\Vert_2^2\lambda_n}{1-2c\lambda_n}},
            \end{eqnarray*}
            provided that \(c<(2\lambda_n)^{-1}\) for all \(n\geq 1\), we further get
            \begin{multline}\label{aux_OU_infprod}
                \exp\left\{
                c\int_0 ^T\Vert \mu^{OU}(s) \Vert_2^2\,ds
                \right\}
                \prod_{n=1}^\infty 
                \E_{\PP^{OU}_y}\left[ 
            e^{2c\mu_n\xi_n+c\Vert\xi_n\Vert_2^2}
                \right]\\
                = \exp\left\{
                c\int_0 ^T\Vert \mu^{OU}(s) \Vert_2^2\,ds
                \right\}
                \prod_{n=1}^\infty
                (1-2c\lambda_n)^{-d/2} e^{\frac{2c^2\Vert \mu_n\Vert_2^2\lambda_n}{1-2c\lambda_n}}\\
                =\left(
                \prod_{n=1}^\infty (1-2c\lambda_n)^{-d/2}
                \right)
                \exp\left\{
                \sum_{n=1}^\infty 
                \frac{c\Vert \mu_n\Vert_2^2}{(1-2c\lambda_n)}
                \right\},
                \quad c<(2\max_n \lambda_n)^{-1},
            \end{multline}
            where the last equality follows from the observation that \(\int_0 ^T \Vert \mu^{OU}(s)\Vert_2^2\,ds=\sum_{n=1}^\infty \Vert \mu_n\Vert_2^2\). Given that by~\eqref{lambda_bound} the eigenvalues \(\lambda_n\) satisfy \(\lambda_n^{-1}<2a_0^2\), the sufficient condition for \(c\) is given by \(c<a_0^2\).
            
It now remains to evaluate the exponential and infinite product terms. The rest of the proof is thus divided into two steps.
            
            \underline{Step 1.} Let us start with the exponential term. Using again the very definitions of \(\mu_n\) and \(\mu^{OU}\), one can rewrite
            \[
            \Vert \mu_n\Vert_2^2
            =\left(
            \int_0 ^T 
            \phi_n(s)\mu^{OU}(s)\,ds
            \right)^2
            =\Vert y\Vert_2^2
            \left(
            \int_0 ^T 
            \phi_n(s)e^{-2a_0 s}\,ds
            \right)^2.
            \]
            Denote for convenience \(g(s):=e^{-2a_0 s}\), \(0\leq s\leq T\). Since \(\{\phi_n\}_{n\geq 1}\) is the orthonormal basis in \(L^2([0,T])\), one can decompose \(g\) as \(g=\sum_{n\geq 1} \langle g,\phi_n\rangle \phi_n\) with \(\langle g,\phi_n\rangle=\int_0 ^T g(s)\phi_n(s)\,ds\), and, since \(c<(2\max_n \lambda_n)^{-1}\), observe that
            \[
            (I-2cK^{OU})^{-1} g
            =\sum_{n=1}^{\infty} 
            \frac{1}{1-2c\lambda_n} 
            \langle g,\phi_n\rangle
            \phi_n,
            \]
            where \(I\) is the identity operator. Taking the scalar product with \(g\) on both sides, we further get
            \[
            \langle g, (I-2cK^{OU})^{-1} g \rangle
            =\sum_{n=1}^{\infty} 
            \frac{1}{1-2c\lambda_n} 
            \langle g,\phi_n\rangle^2,
            \]
            which yields 
            \[
            \sum_{n=1}^\infty 
            \frac{c\Vert \mu_n\Vert_2^2}{(1-2c\lambda_n)}
            =c\Vert y\Vert_2^2
            \sum_{n=1}^\infty
            \frac{\langle g,\phi_n\rangle^2}{(1-2c\lambda_n)}
            =c\Vert y\Vert_2^2
            \langle g, (I-2cK^{OU})^{-1} g \rangle.
            \]
            Hence, it suffices to compute the scalar product to the right-hand side. 

            Denote further \(u:=(I-2cK^{OU})^{-1} g\). Then it holds that \(u(I-2cK^{OU})= g\), i.e.,
            \begin{equation}\label{aux_u}
            u(s)
            -2c\int_0^T K^{OU}(s,t)u(t)\,dt
            =g(s),
            \quad \forall 0\leq s\leq T.
            \end{equation}
            It can be noticed that \(K^{OU}\) satisfies 
            \[
            \left(
            \frac{d^2}{ds^2}
            -4a_0^2
            \right)
            K^{OU}(s,t)
            =-2\delta(t-s)
            \]
            with boundary conditions \(K^{OU}(0,T)=0\) and \(\partial_s K^{OU}(T,t)=-2a_0 K^{OU}(T,t)\) (for more detail, see Lemma~\ref{eigenvalues_OU}). Hence, applying the operator \(\mathcal{D}_s:=\frac{d^2}{ds^2}-4a_0^2\) to~\eqref{aux_u}, we get
            \begin{multline*}
                u''(s)-4a_0^2 u(s)
                -2c\int_0^T 
                \mathcal{D}_s 
                K^{OU}(s,t)
                u(t)\,dt
                =g''(s)-4a_0^2 g(s)\\
                \Leftrightarrow 
                u''(s)-4(a_0^2-c) u(s) 
                =0,
            \end{multline*}
            and, since \(b:=2\sqrt{a_0^2-c}>0\), the equation above admits the unique solution \(u(s):=Ae^{bs}+Be^{-bs}\) with some \(A,B\in\R\). Given that \(u(0)=1\), it follows that \(A+B=1\). Furthermore, using the boundary condition for \(K^{OU}\) and~\eqref{aux_u} one more time, we get
            \begin{multline*}
                u'(T)
                -2c\int_0 ^T \partial_s
                K^{OU}(T,t) 
                u(t)
                \,dt
                =-2a_0 e^{-2a_0 T} \\
                \Leftrightarrow
                u'(T)
                +4a_0 c 
                \int_0 ^T
                K^{OU}(T,t)
                u(t)\,dt
                =-2a_0 e^{-2a_0 T}\\
                \Leftrightarrow 
                u'(T)
                +2a_0 \left(u(T)-e^{-2a_0 T}\right)
                =-2a_0 e^{-2a_0 T}\\
                \Leftrightarrow 
                u'(T)+2a_0 u(T)=0.
            \end{multline*}
The latter relation implies that
\[
    bAe^{bT} - bBe^{-bT}
    +2a_0 Ae^{bT} + 2a_0 B e^{-bT}
    =0 
    \qquad
    \Leftrightarrow A=\frac{b-2a_0}{b+2a_0}Be^{-2bT},
\]
from which 
\[
B=\frac{b+2a_0}{(b-2a_0)e^{-2bT} + (b+2a_0)}
\quad \text{and} \quad
A=\frac{(b-2a_0)e^{-2bT}}{(b-2a_0)e^{-2bT} + (b+2a_0)}.
\]
This leads to 
\begin{multline*}
    \langle g, u\rangle
    =\int_0 ^T e^{-2a_0 s}\left(
    Ae^{bT} + Be^{-bs}
    \right)
    \,ds
    =\frac{A(1-e^{-(2a_0-b)T})}{2a_0-b}
    +\frac{B(1-e^{-(2a_0+b)T})}{2a_0+b}\\
    =\frac{1-e^{-2bT}}{(b-2a_0)e^{-2bT} + b+2a_0},
\end{multline*}
which finally yields
\begin{multline*}
\sum_{n=1}^\infty 
\frac{c\Vert \mu_n\Vert_2^2}{1-2c\lambda_n}
=c\Vert y\Vert_2^2
\frac{1-e^{-2bT}}{(b-2a_0)e^{-2bT} + b+2a_0}\\
=\frac{c(1-e^{-4\sqrt{a_0^2-c}T})}{
2\left(
(\sqrt{a_0^2-c}-a_0)e^{-4\sqrt{a_0^2-c}T} + \sqrt{a_0^2-c}+a_0
\right)
}
\Vert y\Vert_2^2.
\end{multline*}
            
\underline{Step 2.} For the infinite product in~\eqref{aux_OU_infprod}, one can again use the expressions~\eqref{K-OU-eigenvalues} to get
\[
\prod_{n=1}^\infty 
\frac{1}{(1-2c\lambda_n)^{d/2}}
=\prod_{n=1}^\infty
\left(
\frac{\beta_n^2+4(a_0^2-c)}{\beta_n^2+4a_0^2}
\right)^{-d/2}
\]
with \(\beta_n\) being the positive solutions of~\eqref{eigenvals_eq_y}. Now let us introduce an auxiliary function
\[
F(z):=2a_0 \sin(zT) + z\cos(zT),
\quad z\in\mathbb{C}.
\]
It can be seen that \(F\) is an entire odd function of order 1, whose roots coincide with those of~\eqref{eigenvals_eq_y}. In particular, \(z=0\) is the root of multiplicity one. Observing that for large \(n\) it holds that \(\beta_n=(n-0.5)\pi/T+O(1/n)\), which implies \(\sum_{n=1}^\infty \beta_n^2<\infty\), one can hence apply the Weierstrass factorisation theorem to decompose
\begin{multline*}
F(z)
=ze^{Cz+D}
\prod_{n\neq 0} \left(
1-\frac{z}{\beta_n}
\right)
e^{z/\beta_n}
=ze^{Cz+D}
\prod_{n=1}^{\infty}
\left(
1-\frac{z}{\beta_n}
\right)
e^{z/\beta_n}
\left(
1+\frac{z}{\beta_n}
\right)
e^{-z/\beta_n}\\
=ze^{Cz+D}
\prod_{n=1}^\infty 
\left(
1-\frac{z^2}{\beta_n^2}
\right)
\end{multline*}
with some \(C,D\in\mathbb{C}\), the second equality following from the symmetry of the roots \(\beta_n\); see, e.g., Chapter 5 Section 2.3 in~\cite{ahlfors1979}. Defining also \(P(z)=z\prod_{n=1}^\infty (1-z^2/\beta_n^2)\) and observing that \(F(z)=e^{Cz+D}P(z)\), both \(F\) and \(P\) being odd, we further conclude that \(C=0\), which yields
\[
F(z)=\operatorname{const} z
\prod_{n=1}^\infty 
\left(
1-\frac{z^2}{\beta_n^2}
\right).
\]
Now one can observe that
\[
\frac{F(2i \sqrt{a_0^2-c})}{F(2 ia_0)}
=\frac{\sqrt{a_0^2-c}}{a_0}
\prod_{n=1}^\infty 
\frac{\beta_n^2+4(a_0^2-c)}{\beta_n^2+4a_0^2}
\]
and, since
\[
F(2ix)
=2a_0\sin(2ixT) + 2ix\cos(2ixT)
=2i(a_0\sinh(2xT)+x\cosh(2xT)),
\]
it follows that
\begin{multline*}
\prod_{n=1}^\infty 
\frac{\beta_n^2+4(a_0^2-c)}{\beta_n^2+4a_0^2}
=\frac{a_0}{\sqrt{a_0^2-c}}
\frac{2i(a_0\sinh(2\sqrt{a_0^2-c}T)+\sqrt{a_0^2-c}\cosh(2\sqrt{a_0^2-c}T))}{2i(a_0\sinh(2a_0 T)+a_0\cosh(2a_0 T))}\\
=\frac{a_0}{\sqrt{a_0^2-c}}
\frac{a_0\sinh(2\sqrt{a_0^2-c}T)+\sqrt{a_0^2-c}\cosh(2\sqrt{a_0^2-c}T)}{a_0 e^{2a_0 T}}\\
=\frac{e^{-2a_0T}}{\sqrt{a_0^2-c}}
\left(
a_0\sinh(2\sqrt{a_0^2-c}T)+\sqrt{a_0^2-c}\cosh(2\sqrt{a_0^2-c}T)
\right).
\end{multline*}
Taking the power \((-d/2)\) yields the claim.
\end{proof}
\begin{lem}\label{eigenvalues_OU}
Consider the operator~\eqref{K-OU-operator} corresponding to the convariance function of the Ornstein-Uhlenbeck process. Its eigenvalues are given by 
    \begin{equation}\label{K-OU-eigenvalues}
    \lambda_n
    := \frac{2}{\beta_n^2+4a_0^2},
    \quad n=1,2,\dots
    \end{equation}
    where \(\beta_n\) are the positive roots of the equation \(\tan \beta T=-\beta/(2a_0)\). The eigenfunctions can be chosen as 
    \begin{equation}\label{phi_y}
        \phi_n(s):=\sin(\beta_n s),
        \quad n\geq 1.
    \end{equation}.
\end{lem}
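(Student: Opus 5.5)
The plan is to identify the integral operator \eqref{K-OU-operator} with the inverse of a Sturm--Liouville operator. Then the eigenvalue problem \(K^{OU}\phi=\lambda\phi\) becomes a second-order ODE with explicit boundary conditions, which can be solved in closed form.

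\emph{Step 1: Green's function identity.} Fix \(t\in(0,T)\) and regard \(K^{OU}(\cdot,t)\) as a function of \(s\).
\begin{itemize}
\item For \(s<t\) we have \(K^{OU}(s,t)=\frac{1}{2a_0}\bigl(e^{-2a_0(t-s)}-e^{-2a_0(t+s)}\bigr)\).
\item For \(s>t\) we have \(K^{OU}(s,t)=\frac{1}{2a_0}\bigl(e^{-2a_0(s-t)}-e^{-2a_0(s+t)}\bigr)\).
\end{itemize}
On each piece the function is a combination of \(e^{\pm 2a_0 s}\), so \(\partial_s^2K^{OU}=4a_0^2K^{OU}\) there. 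The function is continuous at \(s=t\), and \(\partial_sK^{OU}\) jumps from \(+1\) to \(-1\) there. Hence, in the sense of distributions,
\[
\Bigl(\frac{d^2}{ds^2}-4a_0^2\Bigr)K^{OU}(s,t)=-2\delta(s-t).
\]
The boundary conditions read off directly: \(K^{OU}(0,t)=0\), and \(\partial_sK^{OU}(T,t)=-2a_0K^{OU}(T,t)\) because for \(s\ge t\) both exponentials carry the factor \(e^{-2a_0 s}\).

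It follows that for every \(f\in L^2([0,T])\) the function \(w:=K^{OU}f\) belongs to \(H^2\) and satisfies
\[
w''-4a_0^2w=-2f,\qquad w(0)=0,\qquad w'(T)+2a_0w(T)=0.
\]
In particular \(K^{OU}\) is injective, since \(K^{OU}f=0\) forces \(f=-\tfrac12(w''-4a_0^2w)=0\). So every eigenvalue is nonzero; it is also positive, because \(K^{OU}\) is a covariance operator.

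\emph{Step 2: reduction to an ODE.} Let \(K^{OU}\phi=\lambda\phi\) with \(\lambda>0\). Step 1 gives
\[
\phi''=-\Bigl(\frac{2}{\lambda}-4a_0^2\Bigr)\phi,\qquad \phi(0)=0,\qquad \phi'(T)+2a_0\phi(T)=0.
\]
I would first rule out \(\frac{2}{\lambda}-4a_0^2\le0\).
\begin{itemize}
\item If \(\frac{2}{\lambda}-4a_0^2=-\gamma^2<0\), then \(\phi\) is a multiple of \(\sinh(\gamma s)\), and \(\gamma\cosh(\gamma T)+2a_0\sinh(\gamma T)>0\) violates the Robin condition unless \(\phi\equiv0\).
\item If \(\frac{2}{\lambda}-4a_0^2=0\), then \(\phi\) is a multiple of \(s\), and \(1+2a_0T>0\) gives the same contradiction.
\end{itemize}
So \(\beta^2:=\frac{2}{\lambda}-4a_0^2>0\), which is exactly \(\lambda=2/(\beta^2+4a_0^2)\). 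Then \(\phi\) is a multiple of \(\sin(\beta s)\), and the Robin condition becomes \(\beta\cos(\beta T)+2a_0\sin(\beta T)=0\), i.e.\ \(\tan(\beta T)=-\beta/(2a_0)\). Note that \(\cos(\beta T)=0\) is impossible here, since it would force \(\sin(\beta T)=0\) as well. Each positive root gives a one-dimensional eigenspace, because the solution is fixed by \(\phi(0)=0\) up to a constant.

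\emph{Step 3: converse and completeness.} Conversely, let \(\beta_n>0\) solve the transcendental equation and put \(\phi_n(s)=\sin(\beta_n s)\). Then \(\phi_n\) solves the BVP of Step 1 with right-hand side \(f=\lambda_n^{-1}\phi_n\), with \(\lambda_n=2/(\beta_n^2+4a_0^2)\). The function \(w=K^{OU}(\lambda_n^{-1}\phi_n)\) solves the same BVP.

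This converse is the only step needing care: I need uniqueness of the BVP, i.e.\ that \(h''-4a_0^2h=0\), \(h(0)=0\), \(h'(T)+2a_0h(T)=0\) forces \(h=0\). This is the same sinh-argument as in Step 2. It gives \(w=\phi_n\), i.e.\ \(K^{OU}\phi_n=\lambda_n\phi_n\).

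Completeness of \(\{\phi_n\}\) (after normalization) then follows from the spectral theorem for the compact, self-adjoint, injective operator \(K^{OU}\) together with Step 2, which shows these are all the eigenfunctions. As a by-product, \(\lambda_n^{-1}=(\beta_n^2+4a_0^2)/2>2a_0^2\), which is the bound used in the proof of Lemma~\ref{OU-expmoments}.
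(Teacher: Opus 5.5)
Your proposal is correct and follows the same route as the paper. The Green's-function identity $(\partial_s^2-4a_0^2)K^{OU}(\cdot,t)=-2\delta(\cdot-t)$, with $u(0)=0$ and $u'(T)=-2a_0u(T)$, reduces $K^{OU}\phi=\lambda\phi$ to the same Robin problem. You also supply what the paper only asserts or omits: excluding $2/\lambda-4a_0^2\le 0$, the converse $K^{OU}\phi_n=\lambda_n\phi_n$ via uniqueness of the boundary value problem, and completeness of the normalized $\phi_n$ via the spectral theorem.

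One harmless slip: the one-sided derivatives of $K^{OU}(\cdot,t)$ at $s=t$ are $1+e^{-4a_0t}$ and $-1+e^{-4a_0t}$, not $\pm1$. Only their jump $-2$ enters the argument, so your conclusion stands.
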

\begin{proof}
    Let us first define the operator \(\mathcal{D}_s:=\frac{d^2}{ds^2}-4a_0^2\). Since
            \[
            \frac{d^2}{ds^2}e^{-2a_0|t-s|}
            =4a_0^2e^{-2a_0|t-s|}-4a_0\delta(t-s),
            \]
            it can be seen that 
            \[
            (\mathcal{D}_s K^{OU}(\cdot,t))=-2\delta(t-s).
            \]
            Hence, the function \(u\) defined as \(u(s):=(K^{OU} f)(s)\) for some \(f\in L^2([0,T])\) with \(T>0\) fixed satisfies \(\mathcal{D}_s u(s)=-2f(s)\). 
            Noticing that \(\lambda\) (resp.\ \(\phi\neq 0\)) is the eigenvalue (resp.\ eigenfunction) of~\eqref{K-OU-operator} if and only if \(K^{OU} \phi = \lambda \phi\), choosing \(f=\phi\) we get by the above that \(\mathcal{D}_s u(s)=-2\phi(s)=-2u(s)/\lambda\), i.e.,
            \begin{eqnarray}\label{eigenvalues2}
            \frac{d^2}{ds^2} u(s)
            -4a_0^2 u(s)
            &=&-\frac{2}{\lambda}u(s),
            \quad 0<s<T,\\
            u''(s) &=& \left(4a_0^2 - \frac{2}{\lambda}
            \right)
            u(s), 
            \quad 0<s<T,\label{eigenvalues3}
            \end{eqnarray}
            with the boundary conditions \(u(0)=0\) and \(u'(T)=-2a_0u(T)\).
            One easily observes that the non-trivial solution exists if and only if 
            \begin{equation}\label{lambda_bound}
                \frac{2}{\lambda}-4a_0^2=:\beta^2>0,
            \end{equation}
            in which case it is given by 
            \[
            u(s)=A\cos(\beta s)+B\sin(\beta s)
            \]
            with some \(A,B\in\mathbb{R}\). By the boundary conditions we immediately get that \(A=0\), \(B\neq 0\), and it holds that
    \begin{equation}\label{eigenvals_eq_y}
    \beta \cos(\beta T)+2a_0\sin(\beta T)=0
    \quad \Leftrightarrow\quad
    \tan(\beta T)
    =-\frac{\beta}{2a_0}.
    \end{equation}
    Hence, defining \(\beta_n\), \(n\geq 1\), as positive roots of~\eqref{eigenvals_eq_y}, and using again the relation \(\beta^2=2/\lambda-4a_0^2\), we get the claim.
    \end{proof}

    \begin{lem}\label{polynom_moments}
        Let \(Y_t\) be the Ornstein-Uhlenbeck process defined in~\eqref{OU_a0}. Then, for any \(p\geq 1\), it holds that
        \[
        \left(
        \E_{\PP^{OU}_y}\left[
        \Vert Y_t\Vert_2^p
        \right]
        \right)^{1/p}
        \leq 2^{1-1/p}\left(
        e^{-2a_0 t} 
        \Vert y\Vert_2
        +\sqrt{\frac{1-e^{-4a_0 t}}{a_0}}
        \left(
        \frac{\Gamma((d+p)/2)}{\Gamma(d/2)}
        \right)^{1/p}
        \right).
        \]
    \end{lem}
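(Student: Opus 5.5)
We prove the moment bound of Lemma~\ref{polynom_moments} using the explicit Gaussian structure of the Ornstein--Uhlenbeck process~\eqref{OU_a0}. Under $\PP^{OU}_y$ the unique solution is
\[
Y_t=e^{-2a_0t}y+\sqrt{2}\int_0^t e^{-2a_0(t-s)}\,dW_s .
\]
By~\eqref{OU_cov}, the stochastic integral $Z_t$ is a centred Gaussian vector with covariance $\sigma_t^2 I_d$, where $\sigma_t^2=K^{OU}(t,t)=(1-e^{-4a_0t})/(2a_0)$. Hence
\[
\Vert Y_t\Vert_2\leq e^{-2a_0t}\Vert y\Vert_2+\Vert Z_t\Vert_2 .
\]

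First we control the $p$-th moment of the sum. Using $(u+v)^p\leq 2^{p-1}(u^p+v^p)$, we get
\[
\E\Vert Y_t\Vert_2^p\leq 2^{p-1}\bigl(e^{-2a_0pt}\Vert y\Vert_2^p+\E\Vert Z_t\Vert_2^p\bigr).
\]
Taking $p$-th roots and using $(A+B)^{1/p}\leq A^{1/p}+B^{1/p}$ for $p\geq 1$ yields the prefactor $2^{1-1/p}$. Alternatively one may use Minkowski's inequality, which even avoids this prefactor, but we keep the form stated in the lemma.

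Next we compute $\E\Vert Z_t\Vert_2^p$ exactly. Write $Z_t=\sigma_t G$ with $G\sim\mathcal N(0_d,I_d)$. Then $\Vert G\Vert_2^2\sim\chi^2_d$, and the standard formula gives
\[
\E\Vert G\Vert_2^p=2^{p/2}\,\frac{\Gamma((d+p)/2)}{\Gamma(d/2)},
\]
which follows by integrating $r^{p}$ against the chi density in polar coordinates. Therefore
\[
\bigl(\E\Vert Z_t\Vert_2^p\bigr)^{1/p}=\sqrt{2}\,\sigma_t\Bigl(\frac{\Gamma((d+p)/2)}{\Gamma(d/2)}\Bigr)^{1/p}
=\sqrt{\frac{1-e^{-4a_0t}}{a_0}}\Bigl(\frac{\Gamma((d+p)/2)}{\Gamma(d/2)}\Bigr)^{1/p}.
\]
Inserting this into the previous step gives exactly the stated bound.

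No real obstacle is expected. The only point requiring care is the variance bookkeeping: the factor $\sqrt2$ in the noise combines with $\int_0^t e^{-4a_0(t-s)}\,ds=(1-e^{-4a_0t})/(4a_0)$ to give $\sigma_t^2=(1-e^{-4a_0t})/(2a_0)$. This matches~\eqref{OU_cov} and produces the constant $\sqrt{(1-e^{-4a_0t})/a_0}$ after multiplying by the $\sqrt2$ from the chi moment.
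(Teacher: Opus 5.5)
Your proposal is correct and follows essentially the same route as the paper: the decomposition $Y_t=e^{-2a_0t}y+Z_t$ with $Z_t\sim\mathcal N\bigl(0_d,\tfrac{1-e^{-4a_0t}}{2a_0}I_d\bigr)$, the convexity bound $(u+v)^p\le 2^{p-1}(u^p+v^p)$, the exact moment $\E\Vert G\Vert_2^p=2^{p/2}\Gamma((d+p)/2)/\Gamma(d/2)$, and subadditivity of $x\mapsto x^{1/p}$. Your side remark is also correct: Minkowski's inequality gives the sharper bound without the factor $2^{1-1/p}$, and this implies the stated one.
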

    \begin{proof}
        Similar to Lemma~\ref{OU-expmoments}, the key idea is to decompose \(Y_t\overset{d}{=} \mu^{OU}(t)+Z_t\) under \(\PP_y\), where \(\mu^{OU}(t)=e^{-2a_0 t}y\) and \(Z_t\sim\mathcal{N}(0_d, K^{OU}(t,t)I_d)\) with \(K^{OU}\) defined in~\eqref{OU_cov}. Then it immediately follows that
        \begin{multline*}
            \E_{\PP^{OU}_y}\left[ 
            \Vert Y_t\Vert_2^p
            \right]
            =\E_{\PP^{OU}_y}\left[ 
            \Vert 
            \mu^{OU}(t) + Z_t
            \Vert_2^p
            \right]
            \leq 2^{p-1}\left(
            \Vert \mu^{OU}(t)\Vert_2^p
            +\E_{\PP^{OU}_y} \left[ 
            \Vert Z_t\Vert_2^p
            \right]
            \right)\\
            =2^{p-1}\left(
            \Vert \mu^{OU}(t)\Vert_2^p
            +\left( 
            \frac{1-e^{-4a_0 t}}{2a_0}
            \right)^{p/2}
            \E_{\PP^{OU}_y} \left[ 
            \Vert \xi\Vert_2^p
            \right]
            \right)\\
            =2^{p-1}\left(
            \Vert \mu^{OU}(t)\Vert_2^p
            +\left( 
            \frac{1-e^{-4a_0 t}}{a_0}
            \right)^{p/2}
            \frac{\Gamma((p+d)/2)}{\Gamma(d/2)}
            \right),
        \end{multline*}
        where \(\xi\sim\mathcal{N}(0_d, I_d)\).
        Taking the power \(1/p\) and using the generic inequality \((a+b)^{1/p}\leq a^{1/p} + b^{1/p}\), \(a,b\geq 0\), we get the claim.
    \end{proof}

\bibliographystyle{plain}
\bibliography{biblio}

@book{bogachev2022fokker,
  title={Fokker--Planck--Kolmogorov Equations},
  author={Bogachev, Vladimir I and Krylov, Nicolai V and R{\"o}ckner, Michael and Shaposhnikov, Stanislav V},
  volume={207},
  year={2022},
  publisher={American Mathematical Society}
}

@article{dong2015schauder,
  title={Schauder estimates for higher-order parabolic systems with time irregular coefficients},
  author={Dong, Hongjie and Zhang, Hong},
  journal={Calculus of Variations and Partial Differential Equations},
  volume={54},
  number={1},
  pages={47--74},
  year={2015},
  publisher={Springer}
}

@article{cattiaux2022functional,
  title={Functional inequalities for perturbed measures with applications to log-concave measures and to some Bayesian problems},
  author={Cattiaux, Patrick and Guillin, Arnaud},
  journal={Bernoulli},
  volume={28},
  number={4},
  pages={2294--2321},
  year={2022},
  publisher={Bernoulli Society for Mathematical Statistics and Probability}
}

@book{barbu2024nonlinear,
  title={Nonlinear Fokker-Planck flows and their probabilistic counterparts},
  author={Barbu, Viorel and R{\"o}ckner, Michael},
  volume={2353},
  year={2024},
  publisher={Springer}
}

@book{krylov1996lectures,
  title={Lectures on elliptic and parabolic equations in Holder spaces},
  author={Krylov, Nikola{\u\i} Vladimirovich},
  volume={12},
  year={1996},
  publisher={American Mathematical Soc.}
}

@book{LSU,
  title={Linear and Quasi-linear Equations of Parabolic Type},
  author={Ladyzhenskaia, Ol'ga A. and Solonnikov, Vsevolod A. and Ural'tseva, Nina N.},
  series={American Mathematical Society, translations of mathematical monographs},
  year={1968},
  publisher={American Mathematical Society}
}

@Article{jourdain2008propagation,
 Author = {Jourdain, Benjamin and Malrieu, Florent},
 Title = {Propagation of chaos and {Poincar{\'e}} inequalities for a system of particles interacting through their {CDF}},
 FJournal = {The Annals of Applied Probability},
 Journal = {Ann. Appl. Probab.},
 ISSN = {1050-5164},
 Volume = {18},
 Number = {5},
 Pages = {1706--1736},
 Year = {2008},
 Language = {English},
 DOI = {10.1214/07-AAP513},
 zbMATH = {5374750},
 Zbl = {1185.65013}
}

@book{KS-BM,
  title={Brownian motion and stochastic calculus},
  author={Karatzas, Ioannis and Shreve, Steven},
  year={2014},
  publisher={springer}
}

@book{ahlfors1979,
  title={Complex Analysis},
  subtitle={An introduction to the theory of analytic functions of one complex variable},
  author={Ahlfors, L. V.},
  year={1979},
  publisher={McGraw-Hill Book Company},
  edition={3} 
}

@book{lieberman1996,
  title={Second order parabolic differential equations},
  author={Lieberman, Gary M},
  year={1996},
  publisher={World scientific}
}

@article{BSS24,
  title={The Fokker--Planck--Kolmogorov equation with nonlinear terms of local and nonlocal type},
  author={Vladimir I. Bogachev, Damir I. Salakhov and Stanislav V. Shaposhnikov},
  journal={St. Petersburg Mathematical Journal},
  volume={35},
  number={5},
  pages={749--767},
  year={2024}
}

@book{Barbu2024,
  author    = {Viorel Barbu and Michael R{\"o}ckner},
  title     = {Nonlinear {Fokker-Planck} Flows and Their Probabilistic Counterparts},
  volume    = {2353},
  publisher = {Springer},
  series    = {Lecture Notes in Mathematics},
  year      = {2024}
}

@misc{Bondi2025,
  author        = {Luca Bondi and Elena Issoglio and Francesco Russo},
  title         = {{McKean-Vlasov} Equations with Singular Coefficients---a Review of Recent Results},
  howpublished  = {arXiv preprint arXiv:2507.23553},
  year          = {2025}
}

@article{Cattiaux2022,
  author  = {Patrick Cattiaux and Arnaud Guillin},
  title   = {Functional Inequalities for Perturbed Measures with Applications to Log-Concave Measures and to Some Bayesian Problems},
  journal = {Bernoulli},
  volume  = {28},
  number  = {4},
  pages   = {2294--2321},
  year    = {2022}
}

@article{Grube2024,
  author  = {Sebastian Grube},
  title   = {Strong Solutions to {McKean-Vlasov} {SDEs} with Coefficients of {Nemytskii} Type: The Time-Dependent Case},
  journal = {Journal of Evolution Equations},
  volume  = {24},
  number  = {2},
  pages   = {37},
  year    = {2024}
}

@article{Kumar2022,
  author  = {Chaman Kumar and Neelima and Christoph Reisinger and Wolfgang Stockinger},
  title   = {Well-Posedness and Tamed Schemes for {McKean-Vlasov} Equations with Common Noise},
  journal = {The Annals of Applied Probability},
  volume  = {32},
  number  = {5},
  pages   = {3283--3330},
  year    = {2022}
}

@article{Mishura2020,
  author  = {Yuliya Mishura and Alexander Veretennikov},
  title   = {Existence and Uniqueness Theorems for Solutions of {McKean-Vlasov} Stochastic Equations},
  journal = {Theory of Probability and Mathematical Statistics},
  volume  = {103},
  pages   = {59--101},
  year    = {2020}
}

@misc{Rehmeier2023,
  author        = {Marco Rehmeier},
  title         = {Weighted {$L^{1}$}-Semigroup Approach for Nonlinear {Fokker-Planck} Equations and Generalized {Ornstein-Uhlenbeck} Processes},
  howpublished  = {arXiv preprint arXiv:2308.09420},
  year          = {2023}
}

@article{Ren2023,
  author  = {Panpan Ren},
  title   = {Singular {McKean-Vlasov} {SDEs}: Well-Posedness, Regularities and {Wang's} {Harnack} Inequality},
  journal = {Stochastic Processes and their Applications},
  volume  = {156},
  pages   = {291--311},
  year    = {2023}
}

@article{Rockner2021,
  author  = {Michael R{\"o}ckner and Xicheng Zhang},
  title   = {Well-Posedness of Distribution Dependent {SDEs} with Singular Drifts},
  journal = {Bernoulli},
  volume  = {27},
  number  = {2},
  pages   = {1131--1158},
  year    = {2021}
}

@article{Zhang2005,
  author  = {Xicheng Zhang},
  title   = {Strong Solutions of {SDEs} with Singular Drift and {Sobolev} Diffusion Coefficients},
  journal = {Stochastic Processes and their Applications},
  volume  = {115},
  number  = {11},
  pages   = {1805--1818},
  year    = {2005},
  publisher={Elsevier}
}
\end{document}